\numberwithin{equation}{section}
\newcommand{\me}{\mathbb{E}}
\newcommand{\E}{\mathbb{E}}
\newcommand{\Q}{\mathbb{Q}}
\newcommand{\mr}{\mathbb{R}}
\newcommand{\mc}{\mathbb{C}}
\newcommand{\mn}{\mathbb{N}}
\newcommand{\mmp}{\mathbb{P}}
\renewcommand{\P}{\mathbb{P}}
\newcommand{\ms}{\mathfrak{s}}
\renewcommand{\Re}{{\rm Re}}
\renewcommand{\Im}{{\rm Im}}
\newcommand{\ii}{{\rm i}}
\newcommand{\Zeros}{{\bf Zeros}}
\DeclareMathOperator{\1}{\mathbbm{1}}
\newcommand{\eee}{{\rm e}}
\newtheorem{thm}{Theorem}[section]
\newtheorem{lemma}[thm]{Lemma}
\newtheorem{cor}[thm]{Corollary}
\newtheorem{assertion}[thm]{Proposition}
\theoremstyle{definition}
\theoremstyle{remark}
\newtheorem{rem}[thm]{Remark}
\begin{document}

\title{Limit theorems for random Dirichlet series}\date{}

\author{Dariusz Buraczewski}
\address{Dariusz Buraczewski, Mathematical Institute, University of Wroclaw, 50-384 Wroclaw, Poland}
\email{dariusz.buraczewski@math.uni.wroc.pl}

\author{Congzao Dong}
\address{Congzao Dong, School of Mathematics and Statistics, Xidian University, 710126 Xi'an, China}
\email{czdong@xidian.edu.cn}

\author{Alexander Iksanov}
\address{Alexander Iksanov, Faculty of Computer Science and Cybernetics, Taras Shev\-chen\-ko National University of Kyiv, 01601 Kyiv, Ukraine}
\email{iksan@univ.kiev.ua}

\author{Alexander Marynych}
\address{Alexander Marynych, Faculty of Computer Science and Cybernetics, Taras Shev\-chen\-ko National University of Kyiv, 01601 Kyiv, Ukraine}
\email{marynych@unicyb.kiev.ua}

\begin{abstract}
We prove a functional limit theorem in a space of analytic functions for the random Dirichlet series $D(\alpha;z)=\sum_{n\geq 2}(\log n)^{\alpha}(\eta_n+\ii \theta_n)/n^z$, properly scaled and normalized, where $(\eta_n,\theta_n)_{n\in\mathbb{N}}$ is a sequence of independent copies of a centered $\mathbb{R}^2$-valued random vector $(\eta,\theta)$ with a finite second moment and $\alpha>-1/2$ is a fixed real parameter. As a consequence, we show that the point processes of complex and real zeros of $D(\alpha;z)$ converge vaguely, thereby obtaining a universality result. In the real case, that is, when $\mathbb{P}\{\theta=0\}=1$, we also prove a law of the iterated logarithm for $D(\alpha;z)$, properly normalized, as $z\to (1/2)+$.
\end{abstract}

\keywords{cluster set; functional central limit theorem; law of the iterated logarithm; local universality; random Dirichlet series; space of analytic functions; zeros of random Dirichlet series}

\subjclass[2020]{Primary: 60F15,60F17,30B50; secondary: 60G50, 30C15}

\maketitle

\section{Introduction}
Let $(X_n)_{n\in\mn}$ be a sequence of complex-valued random variables defined on some
probability space $(\Omega,\mathcal{F},\mathbb{P})$. Here, $\mn=\{1,2,\ldots\}$ is the set of positive integer numbers. The random series
$$
D(z):=\sum_{n\geq 1}\frac{X_n}{n^z},\quad z\in\mathbb{C}
$$
is called {\it random Dirichlet series}. For any
Dirichlet series, deterministic or random, there exist two real parameters $\sigma_a$ and $\sigma_c$ associated with the domains of convergence of $D:=(D(z))_{z\in\mathbb{C}}$, see Chapter 9 in \cite{Titchmarsh:1952}. The parameter $\sigma_a$ is called the {\it abscissa of absolute convergence} and the series $D(z)$ converges absolutely $\mathbb{P}$-almost surely (a.s.~in short) if $\Re(z)>\sigma_a$ and diverges absolutely a.s.~if $\Re(z)<\sigma_a$. Likewise, the parameter $\sigma_c$ is called the {\it abscissa of convergence} and the series $D(z)$ converges a.s.~if $\Re(z)>\sigma_c$ and diverges  a.s.~if $\Re(z)<\sigma_c$. The open half-planes
$$
H_{\sigma_a}=\{z\in\mathbb{C}:\Re(z)>\sigma_a\}\quad\text{and}\quad H_{\sigma_c}=\{z\in\mathbb{C}:\Re(z)>\sigma_c\}
$$
are called the {\it half-plane of absolute convergence} and the {\it half-plane of convergence}, respectively. The function $D$ is a.s. ~analytic in $H_{\sigma_c}$ as an a.s.~uniformly convergent series of analytic  functions, see p.~291 in \cite{Titchmarsh:1952}.

The values $\sigma_a$ and $\sigma_c$ depend heavily upon the properties of the random coefficients $(X_n)_{n\in\mn}$.
According to Section 9.14 in \cite{Titchmarsh:1952},
\begin{equation}\label{eq:sigma_c_formula}
\sigma_c=\limsup_{n\to\infty}\frac{\log |X_1+\ldots+X_n|}{\log n}\quad\text{a.s.}
\end{equation}
If the coefficients $X_1$, $X_2,\ldots$ are independent, then, by a zero-one law, $\sigma_c\in [-\infty, +\infty]$ is a degenerate random variable. Otherwise, $\sigma_c$ may be nondegenerate.

In the present paper we focus on a particular instance of random Dirichlet series, in which $X_n:=(\log n)^{\alpha}(\eta_n+\ii \theta_n)$, $n\in\mn$, where
$\alpha\in\mathbb{R}$ is a fixed parameter and $(\eta_n,\theta_n)_{n\in\mn}$ are independent copies of an $\mr^2$-valued random vector $(\eta,\theta)$ satisfying
\begin{equation}\label{eq:main_assumptions}
\me \eta=\me\theta= 0\quad\text{and}\quad 0<\me\eta^2+\me\theta^2<\infty.
\end{equation}
Here and in what follows, $\me$ denotes the expectation with respect to the probability measure $\mmp$. The corresponding random Dirichlet series will be denoted by $D_\alpha:=(D(\alpha;z))_{z\in\mathbb{C}}$, that is,
$$
D(\alpha;z):=\sum_{n\geq 2}\frac{(\log n)^{\alpha}(\eta_n+\ii \theta_n)}{n^{z}},\quad z\in\mathbb{C}.
$$
For each fixed $\alpha\in\mathbb{R}$, a specialization of ~\eqref{eq:sigma_c_formula} to $D_\alpha$ yields
\begin{equation}\label{eq:sigma_c_under_main_assumption}
\mathbb{P}\left\{\sigma_c=\frac{1}{2}\right\}=1.
\end{equation}
This follows from the law of the iterated logarithm for weighted sums of independent identically distributed random variables, see Theorem 3 in \cite{Teicher:1974}. Indeed, this theorem entails
$$
\limsup_{n\to\infty}\left(2n(\log n)^{2\alpha}\log\log n\right)^{-1/2}\left|\sum_{k=2}^{n}(\log k)^{\alpha}(\eta_k+\ii \theta_k)\right|\in (0,\infty)\quad\text{a.s.},
$$
thereby justifying ~\eqref{eq:sigma_c_under_main_assumption}. Thus, for every fixed $\alpha\in\mathbb{R}$, $z\mapsto D(\alpha;1/2+z)$ is a random analytic function in the right open half-plane
$$
H_{0}=\{z\in\mathbb{C}:\Re(z)>0\}.
$$
The analytic character of $D_\alpha$ on the critical line $\{z\in\mc:\Re(z)=1/2\}$ is discussed in
Theorem 4 on p.~44 in \cite{Kahane:1968}. Further results of this flavor for general random Dirichlet series can be found in \cite{Ding+Xiao:2006}. However, we do not pursue this line of research in the present paper.

Limit behavior of $D_0$ has received some attention in the recent years. The main motivation for our work comes from the two recent papers \cite{Aymone:2019} and \cite{Aymone+Frometa+Misturini:2020}, where the particular case
\begin{equation}\label{eq:simplest_case}
\mathbb{P}\{\eta=\pm 1\}=1/2\quad\text{and}\quad \mathbb{P}\{\theta=0\}=1
\end{equation}
was investigated. In particular, under ~\eqref{eq:simplest_case}, a one-dimensional central limit theorem and a law of the iterated logarithm for $D_0$, properly normalized, can be found in~\cite{Aymone+Frometa+Misturini:2020}. We show that both results continue to hold under more general assumption~\eqref{eq:main_assumptions} for arbitrary $\alpha>-1/2$. Furthermore, we upgrade the one-dimensional central limit theorem to a functional limit theorem in an appropriate space of analytic functions. By a standard reasoning, the latter entails weak convergence of the random point process of zeros of $D_{\alpha}$. Example 5.5 in \cite{Shirai:2012} provides a functional limit theorem, along with a limit theorem for the zeros, for a counterpart of $D_0$, in which the summation extends over prime indices and $\eta+\ii \theta$ has a uniform distribution on $\{z\in\mathbb{C}: |z|=1\}$.

Throughout the paper we write $\overset{\mmp}{\to}$ to denote convergence in probability, and $\Longrightarrow$, ${\overset{{\rm
d}}\longrightarrow}$ and ${\overset{{\rm f.d.d.}}\longrightarrow}$ to denote weak convergence in a function space, weak convergence
of one-dimensional and finite-dimensional distributions, respectively. We also identify $\mr^2$ and $\mc$ via the canonical isomorphism and consider $\mr^2$-valued processes as $\mc$-valued and vice versa.

\section{Functional limit theorem for the random Dirichlet series and convergence of its zeros}
As expected, in the limit theorems for $(D(\alpha;z))_{z\in H_0}$ we shall encounter moments and covariance of $(\eta,\theta)$. As a preparation, put
$$
\mathfrak{C}:=\left(\begin{matrix}{\rm Var}\, \eta & {\rm Cov}\, (\eta,\theta)\\
{\rm Cov}\, (\eta,\theta) & {\rm Var}\, \theta\\ \end{matrix}\right)=:\left(\begin{matrix}\sigma_1^2 & \rho\\
\rho & \sigma_2^2 \end{matrix}\right).
$$
In what follows, $\lfloor x\rfloor$ denotes the integer part of $x\in\mr$ and $y^\intercal$ denotes the transpose of a column vector $y$.

Let $(B(t))_{t\geq 0}=(B_1(t),B_2(t))^{\intercal}_{t\geq 0}$ be a standard two-dimensional centered Brownian motion with the independent components $(B_1(t))_{t\geq 0}$ and $(B_2(t))_{t\geq 0}$. The classical invariance principle tells us that, under ~\eqref{eq:main_assumptions},
\begin{equation}\label{eq:donsker}
\left(\frac{\eta_1+\eta_2+\ldots+\eta_{\lfloor nt\rfloor}}{\sqrt{n}},\frac{\theta_1+\theta_2+\ldots+\theta_{\lfloor nt\rfloor}}{\sqrt{n}}\right)^{\intercal}_{t\geq 0}~\Longrightarrow~(\mathfrak{C}^{1/2} B(t))_{t\geq 0},\quad n\to\infty
\end{equation}
on the Skorokhod space $D([0,\infty),\mr^2)$ of $\mr^2$-valued c\`{a}dl\`{a}g functions defined on $[0,\infty)$, endowed with the $J_1$-topology. As a consequence,
\begin{equation*}
\left(\frac{(\eta_1+\ii\theta_1)+(\eta_2+\ii\theta_2)+\ldots+(\eta_{\lfloor nt\rfloor}+\ii \theta_{\lfloor nt\rfloor})}{\sqrt{n}}\right)_{t\geq 0}~\Longrightarrow~((1,\ii)\mathfrak{C}^{1/2} B(t))_{t\geq 0},\quad n\to\infty
\end{equation*}
on the Skorokhod space $D([0,\infty),\mc)$. Here and hereafter, for $a,b,c,d\in \mc$, $(a,b)(c,d)=ac+bd$. Observe that the weak limit in \eqref{eq:donsker} is a two-dimensional Wiener process with the covariance matrix $\mathfrak{C}$.

Now we introduce a stochastic process which serves as a weak limit of $(D(\alpha; 1/2+sz))_{z\in H_0}$, properly normalized, as $s\to 0+$.

\subsection{The limit process}

For $z\in H_0$, $\alpha>-1/2$ and $j=1,2$, define the Skorokhod integral
$$
\mathcal{I}_j(\alpha;z):=\int_{[0,\infty)}y^{\alpha}\eee^{-zy}{\rm d}B_j(y).
$$
Alternatively, $\mathcal{I}_j(\alpha;z)$ can be thought of as the result of integration by parts
$$
\mathcal{I}_j(\alpha;z)=-\int_{[0,\infty)}B_j(y){\rm d}(y^{\alpha}\eee^{-zy}).
$$
A $\mc$-valued process $\mathcal{I}_\alpha:=(\mathcal{I}(\alpha;z))_{z\in H_0}$ is then defined by the product
$$
\mathcal{I}(\alpha;z)
:= (1,\ii) \mathfrak{C}^{1/2}\left(
\begin{matrix}
\mathcal{I}_1(\alpha;z)\\
\mathcal{I}_2(\alpha;z)
\end{matrix}
\right).
$$
Note that for $\alpha\leq -1/2$ the integral defining $\mathcal{I}_j(\alpha;z)$ diverges due to a singularity at $0$. According to Theorem 5a in \cite{Widder:1946}, the functions $(\mathcal{I}_j(\alpha;z))_{z\in H_0}$, $j=1,2$ are a.s.~analytic in $H_0$, for every fixed $\alpha>-1/2$. Thus, $\mathcal{I}_\alpha$ is also analytic in $H_0$ as a linear combination of analytic functions. Summarizing we conclude that the random process $\mathcal{I}_\alpha$ is a centered Gaussian analytic function on $H_0$. Its covariance structure is given in the next proposition which can be checked by direct calculations using independence of $(\mathcal{I}_1(\alpha;z))$ and $(\mathcal{I}_2(\alpha;z))$.

\begin{assertion}
The covariances of the process $(\mathcal{I}(\alpha;z))_{z\in H_0}$ are given by
\begin{equation}\label{limit1}
\me \left(\mathcal{I}(\alpha;z_1)\mathcal{I}(\alpha;z_2)\right)=\frac{\Gamma(1+2\alpha)(\sigma_1^2-\sigma_2^2+2\ii \rho)}{(z_1+z_2)^{1+2\alpha}},\quad z_1,z_2\in H_0
\end{equation}
and
\begin{equation}\label{limit2}
\me \left(\mathcal{I}(\alpha;z_1)\overline{\mathcal{I}(\alpha;z_2)}\right)= \frac{\Gamma(1+2\alpha)(\sigma_1^2+\sigma_2^2)}{(z_1+\overline{z_2})^{1+2\alpha}},\quad z_1,z_2\in H_0,
\end{equation}
where $\Gamma$ is the Euler gamma function and $\overline{x}$ denotes the complex conjugate of $x\in \mc$.
\end{assertion}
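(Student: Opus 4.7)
The plan is to reduce \eqref{limit1}--\eqref{limit2} to the Wiener isometry for the scalar integrals $\mathcal{I}_1,\mathcal{I}_2$, and then to unwrap the defining linear combination of $\mathcal{I}(\alpha;z)$ using only the identity $(\mathfrak{C}^{1/2})^2=\mathfrak{C}$, without ever writing down the entries of $\mathfrak{C}^{1/2}$ explicitly.

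First I would note that $\mathcal{I}_j(\alpha;z)$ is a Wiener integral of the deterministic (complex) function $y\mapsto y^\alpha\eee^{-zy}\in L^2([0,\infty))$ against the real Brownian motion $B_j$, for $j=1,2$. Square-integrability, and the finiteness of the integrals below, rely precisely on the assumption $\alpha>-1/2$. Splitting the integrand into real and imaginary parts and applying the standard Wiener isometry component-wise, together with the independence of $B_1$ and $B_2$, yields
\[
\me\bigl(\mathcal{I}_j(\alpha;z_1)\mathcal{I}_k(\alpha;z_2)\bigr)=\delta_{jk}\int_0^\infty y^{2\alpha}\eee^{-(z_1+z_2)y}\,{\rm d}y,
\]
\[
\me\bigl(\mathcal{I}_j(\alpha;z_1)\overline{\mathcal{I}_k(\alpha;z_2)}\bigr)=\delta_{jk}\int_0^\infty y^{2\alpha}\eee^{-(z_1+\overline{z_2})y}\,{\rm d}y.
\]
The gamma integral $\int_0^\infty y^{2\alpha}\eee^{-wy}\,{\rm d}y=\Gamma(1+2\alpha)/w^{1+2\alpha}$, valid for $\Re w>0$ when $\alpha>-1/2$, then produces the kernels $\Gamma(1+2\alpha)/(z_1+z_2)^{1+2\alpha}$ and $\Gamma(1+2\alpha)/(z_1+\overline{z_2})^{1+2\alpha}$ on the diagonal.

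Next I would set $v^\intercal:=(1,\ii)\mathfrak{C}^{1/2}\in\mc^{1\times 2}$, so that $\mathcal{I}(\alpha;z)=v_1\mathcal{I}_1(\alpha;z)+v_2\mathcal{I}_2(\alpha;z)$. Bilinearity of the two displays above in the coordinate indices reduces \eqref{limit1} to the identity $v_1^2+v_2^2=\sigma_1^2-\sigma_2^2+2\ii\rho$ and \eqref{limit2} to $|v_1|^2+|v_2|^2=\sigma_1^2+\sigma_2^2$. Since $\mathfrak{C}^{1/2}$ is taken to be symmetric and satisfies $(\mathfrak{C}^{1/2})^2=\mathfrak{C}$, both identities follow immediately from
\[
v_1^2+v_2^2=(1,\ii)\mathfrak{C}(1,\ii)^\intercal,\qquad |v_1|^2+|v_2|^2=(1,\ii)\mathfrak{C}(1,-\ii)^\intercal,
\]
each of which is a one-line matrix-vector product using $\mathfrak{C}=\bigl(\begin{smallmatrix}\sigma_1^2 & \rho\\ \rho & \sigma_2^2\end{smallmatrix}\bigr)$.

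The proof is essentially bookkeeping. The only step requiring any care is the extension of the Wiener isometry to complex-valued deterministic integrands driven by a real Brownian motion, where a conjugation appears in the second identity but not in the first; this is exactly the mechanism through which the two sums $\sigma_1^2-\sigma_2^2+2\ii\rho$ and $\sigma_1^2+\sigma_2^2$ arise from the same quadratic form associated with $\mathfrak{C}$.
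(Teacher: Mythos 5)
Your proposal is correct and takes exactly the route the paper sketches: the paper leaves this as a ``direct calculation using independence of $\mathcal{I}_1$ and $\mathcal{I}_2$'', and your argument --- Wiener isometry (in its bilinear form, without conjugation, and in its Hermitian form, with conjugation), the gamma integral $\int_0^\infty y^{2\alpha}\eee^{-wy}\,{\rm d}y = \Gamma(1+2\alpha)/w^{1+2\alpha}$ for $\Re w>0$, and then the two quadratic forms $(1,\ii)\mathfrak{C}(1,\ii)^\intercal$ and $(1,\ii)\mathfrak{C}(1,-\ii)^\intercal$ via the real symmetric square root --- is precisely that calculation written out.
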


A role of the real parameter $\alpha$ in the definitions of $D_\alpha$ and $\mathcal{I}_\alpha$ is revealed by the following observation. Let $\mathcal{D}_{-}^{\alpha}$ be a fractional derivative operator defined by
\begin{equation}\label{rem:frac_derivative}
(\mathcal{D}_{-}^{\alpha}(f))(z)=(-1)^m \left(\frac{{\rm d}}{{\rm d}z}\right)^{m}(I_{-}^{m-{\alpha}}(f))(z),\quad m:=\lfloor \alpha\rfloor+1,
\end{equation}
where
$$
(I_{-}^{\gamma}(f))(z)=\frac{1}{\Gamma(\gamma)}\int_0^{\infty}f(z+u)u^{\gamma-1}{\rm d}u,\quad \gamma\in (0,1],
$$
see~(22.18) and~(22.21) in \cite{Samko+Kilbas+Marichev:1993}. Then
\begin{equation}\label{eq:frac_dreivative}
D(\alpha;\cdot)=\mathcal{D}_{-}^{\alpha}(D(0;\cdot)), \quad \mathcal{I}(\alpha;\cdot)=\mathcal{D}_{-}^{\alpha}(\mathcal{I}(0;\cdot)).
\end{equation}
Thus, for instance, passing from $\alpha$ to $\alpha+1$ amounts to taking the usual derivative. Equations~\eqref{eq:frac_dreivative} also highlight the special role of the value $\alpha=0$ both for the Dirichlet series $D_\alpha$ and for the process $\mathcal{I}_\alpha$. It turns out that for the two particular choices of the matrix $\mathfrak{C}$, the limit process $\mathcal{I}_\alpha$ and its various time-changed versions pop up frequently in modern probability, with the case $\alpha=0$ being of special importance.

The first of the aforementioned choices is $\mathfrak{C}=I$, where $I$ is the identity matrix. Then
\begin{equation}\label{eq:C=I_limit_process}
\mathcal{I}(\alpha;z)=\int_{[0,\infty)}y^{\alpha}\eee^{-zy}{\rm d}B_{\mc}(y),\quad z\in H_0,
\end{equation}
where $B_{\mc}$ is the standard {\it complex} Brownian motion. An exponential change of time $z\mapsto \eee^{2z}$ which maps the union of parallel horizontal strips
$$
\mathcal{S}_0:=\bigcup_{k\in\mathbb{Z}}\left\{z\in\mc : \Im(z)\in (-\pi/4+\pi k,\pi/4+\pi k)\right\}
$$
onto $H_0$ leads to a Gaussian process
$$
S_{\alpha}(z):=2^\alpha (\Gamma(1+2\alpha))^{-1/2}\eee^{(1+2\alpha)z}\mathcal{I}(\alpha; \eee^{2z}),\quad  z\in \mathcal{S}_0,
$$
with the covariance structure
$$
\E (S_{\alpha}(z_1)S_{\alpha}(z_2))=0\quad\text{and}\quad\E (S_{\alpha}(z_1)\overline{S_{\alpha}(z_2)})=\frac{1}{(\cosh(z_1-\overline{z_2}))^{1+2\alpha}}.
$$
In particular, this means that the process $(S_\alpha(t))_{t\in\mathbb{R}}$ is a complex-valued stationary Gaussian process on $\mr$. Setting $\alpha=0$, replacing $\mathcal{I}(0; \eee^{2z})$ with $\mathcal{I}_1(0; \eee^{2z})$ in the definition of $S_\alpha$ and only considering $z\in\mr$ we obtain a centered real-valued stationary Gaussian process $S_0$, which was recently investigated in ~\cite{Kabluchko:2019}.
Another important time-change is constructed as follows.  Let $\mathbb{D}:=\{z\in\mc:|z| < 1\}$ be the open unit disk in $\mc$. The linear fractional transformation
\begin{equation}\label{eq:lin_frac}
\varphi(z)=\frac{1+z}{1-z},\quad z\in\mathbb{D}
\end{equation}
maps $\mathbb{D}$ conformally to $H_0$. A time-changed process defined by the transformation
\begin{equation}\label{eq:connection_with_random_power_series1}
f_{\alpha,\mathbb{C}}(z):=\frac{2^{\alpha}(\Gamma(1+2\alpha))^{-1/2}}{(1-z)^{1+2\alpha}}\mathcal{I}(\alpha;\varphi(z))=\frac{2^{\alpha}(\Gamma(1+2\alpha))^{-1/2}}{(1-z)^{1+2\alpha}}\mathcal{I}\left(\alpha;\frac{1+z}{1-z}\right),\quad z \in\mathbb{D}
\end{equation}
is a Gaussian process with the covariance structure
$$
\E (f_{\alpha,\mathbb{C}}(z_1)f_{\alpha,\mathbb{C}}(z_2))=0\quad\text{and}\quad \E (f_{\alpha,\mathbb{C}}(z_1)\overline{f_{\alpha,\mathbb{C}}(z_2)})=\frac{1}{(1-z_1\overline{z_2})^{1+2\alpha}},\quad z_1,z_2\in\mathbb{D}.
$$
The latter formula implies that $f_{\alpha, \mathbb{C}}$ has the same distribution as a random Gaussian power series
\begin{equation}\label{eq:power_series_complex}
\sum_{n=0}^{\infty}\mathcal{N}^{\mc}_n \frac{\sqrt{(1+2\alpha)(2+2\alpha)\cdots(n+2\alpha)}}{\sqrt{n!}}z^n,\quad z\in \mathbb{D},
\end{equation}
where $(\mathcal{N}^{\mc}_n)_{n\geq 0}$ is a sequence of independent standard complex Gaussian variables. Random series~\eqref{eq:power_series_complex} is known in the literature as {\it hyperbolic Gaussian analytic function} and its properties have been much studied, with many
basic aspects already covered in Chapter 13 of ~\cite{Kahane:1968}.  An important feature of $f_{\alpha, \mathbb{C}}$
is that the point process of its complex zeros is determinantal if, and only if, $\alpha=0$, see \cite{Peres+Virag:2005} and Chapter 5 in \cite{Hough+Krishnapur+Peres+Virag}.

The second important choice of the matrix $\mathfrak{C}$ corresponds to the real case, when $\mathbb{P}\{\theta=0\}=1$ and $\alpha=0$. Then
$\mathfrak{C}$ has a unique non-zero entry $\sigma_1^2$ and $$
\mathcal{I}(0;z)=\int_{[0,\infty)}\eee^{-zy}{\rm d}B_1(y),\quad z\in H_0.
$$
We assume, without loss of generality, that $\sigma_1^2=1$. A time-changed process
\begin{equation}\label{eq:connection_with_random_power_series2}
f_{\mathbb{R}}(z):=\frac{1}{1-z}\mathcal{I}\left(0;\frac{1+z}{1-z}\right),\quad z\in\mathbb{D}
\end{equation}
has the same distribution as a random power series
\begin{equation}\label{eq:power_series_real}
\sum_{n=0}^{\infty}\mathcal{N}^{\mr}_n z^n,\quad z\in\mathbb{D},
\end{equation}
where $(\mathcal{N}^{\mr}_n)_{n\geq 0}$ is a sequence of independent standard real Gaussian variables. The process $f_{\mathbb{R}}$ has also been much studied. In particular, the point processes of both real and complex zeros of $f_{\mathbb{R}}$ are known to be Pffafian, see \cite{Matsumoto+Shirai:2013}.

\subsection{Distributional limit theorems}

Let $\mathcal{A}(H_{0})$ be the space of analytic functions on $H_{0}$,	endowed with the topology of uniform convergence on compact subsets of $H_{0}$. Our first result is a functional limit theorem in the space $\mathcal{A}(H_{0})$ for the scaled processes $(D(\alpha; 1/2+sz))_{z\in H_{0}}$, when a real parameter $s$ tends to $0+$.
\begin{thm}\label{thm:flt_main}
Assume~\eqref{eq:main_assumptions} and let $\alpha>-1/2$. Then the following weak convergence holds in the space of probability measures on $\mathcal{A}(H_{0})$:
$$
\left(s^{1/2+\alpha}D(\alpha;1/2+sz)\right)_{z\in H_0}~\Longrightarrow~(\mathcal{I}(\alpha;z))_{z\in H_0},\quad s\to 0+.
$$
\end{thm}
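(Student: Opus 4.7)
The plan is to combine two standard ingredients: convergence of finite-dimensional distributions and tightness of the family $(s^{1/2+\alpha}D(\alpha;1/2+s\cdot))_{s>0}$ in $\mathcal{A}(H_0)$. Since the candidate limit $(\mathcal{I}(\alpha;z))_{z\in H_0}$ is a.s.\ analytic on $H_0$, the combination of these two properties yields the asserted weak convergence in $\mathcal{A}(H_0)$ endowed with the topology of locally uniform convergence.

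\textbf{Finite-dimensional convergence.} Fix $z_1,\ldots,z_k\in H_0$ and write
\[
s^{1/2+\alpha}\bigl(D(\alpha;1/2+sz_1),\ldots,D(\alpha;1/2+sz_k)\bigr)^{\intercal}=\sum_{n\geq 2}\xi^{(s)}_n,
\]
where $\xi^{(s)}_n:=s^{1/2+\alpha}(\log n)^{\alpha}(\eta_n+\ii\theta_n)\bigl(n^{-1/2-sz_1},\ldots,n^{-1/2-sz_k}\bigr)^{\intercal}$ is a centered independent $\mc^k$-valued summand. After identifying $\mc^k\simeq\mr^{2k}$ and invoking the Cram\'{e}r--Wold device, the problem reduces to a one-dimensional Lindeberg CLT for a triangular array. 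The substitution $u=s\log n$ turns the two relevant covariance sums $s^{1+2\alpha}\sum_{n\geq 2}(\log n)^{2\alpha}n^{-1-s(z_j+z_\ell)}$ and $s^{1+2\alpha}\sum_{n\geq 2}(\log n)^{2\alpha}n^{-1-s(z_j+\overline{z_\ell})}$ into Riemann sums of step $\sim s$ for the integrals $\Gamma(1+2\alpha)/(z_j+z_\ell)^{1+2\alpha}$ and $\Gamma(1+2\alpha)/(z_j+\overline{z_\ell})^{1+2\alpha}$ respectively; combined with $\me(\eta+\ii\theta)^2=\sigma_1^2-\sigma_2^2+2\ii\rho$ and $\me|\eta+\ii\theta|^2=\sigma_1^2+\sigma_2^2$, this reproduces exactly the covariance structure \eqref{limit1}--\eqref{limit2}. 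Verification of Lindeberg reduces to showing that $\max_{n\geq 2}s^{1/2+\alpha}(\log n)^{\alpha}n^{-1/2-s\Re z_j}\to 0$ as $s\to 0+$, after which dominated convergence using $\me(\eta^2+\theta^2)<\infty$ finishes the step. This last estimate, together with the integrability of $u^{2\alpha}$ near the origin in the limit integral, is where the hypothesis $\alpha>-1/2$ is essential.

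\textbf{Tightness and conclusion.} I would use the classical criterion that a family $(X_s)\subset\mathcal{A}(H_0)$ is tight in the locally uniform topology as soon as, for every compact $K\subset H_0$, $\sup_{s}\me\sup_{z\in K}|X_s(z)|^2<\infty$. The mean value property (together with Jensen's inequality) yields a relatively compact $K'\subset H_0$ with $K\subset K'$ and a constant $C_{K,K'}>0$ such that $\sup_{z\in K}|X_s(z)|^2\le C_{K,K'}\int_{K'}|X_s(w)|^2\,{\rm d}A(w)$; by Fubini it then suffices to bound $\me|s^{1/2+\alpha}D(\alpha;1/2+sw)|^2=s^{1+2\alpha}(\sigma_1^2+\sigma_2^2)\sum_{n\geq 2}(\log n)^{2\alpha}n^{-1-2s\Re w}$ uniformly in $w\in K'$ and in $s$ on a right neighborhood of zero, which follows from the same Riemann-sum estimate used above. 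In my view the main technical obstacle is not any single step in isolation but the need to carry out the Riemann-sum approximation fully uniformly in the spectral parameter $w$ over compact subsets of $H_0$: since $s$ enters both as the prefactor $s^{1+2\alpha}$ and inside the exponent of $n$, one must split off a finite range of small indices $n$ (whose contribution is $O(s^{1+2\alpha})=o(1)$ precisely because $\alpha>-1/2$) from a tail on which uniform Riemann-sum approximation of the smooth integrand $u^{2\alpha}\eee^{-2u\Re w}$ is routine.
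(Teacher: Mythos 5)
Your proposal follows essentially the same route as the paper's proof: convergence of finite-dimensional distributions via covariance limits plus the Lindeberg condition, then tightness in $\mathcal{A}(H_0)$ via a uniform second-moment bound on compacts. The only cosmetic differences are that (i) you invoke Cram\'{e}r--Wold explicitly where the paper applies the Lindeberg--Feller criterion directly to the infinite sum of independent centered complex summands; (ii) you establish the covariance limit $s^{1+2\alpha}\sum_{n\geq 2}(\log n)^{2\alpha}n^{-1-s(z_j+z_\ell)}\to\Gamma(1+2\alpha)/(z_j+z_\ell)^{1+2\alpha}$ by a Riemann-sum argument after the substitution $u=s\log n$, whereas the paper isolates this as Lemma~\ref{lem:zeta_derivatives} and proves it via Euler's summation formula, which is a cleaner way to control the oscillatory terms when $z_j+z_\ell$ has a nonzero imaginary part; and (iii) for tightness you target the stronger statement $\sup_s\E\sup_{z\in K}|X_s(z)|^2<\infty$ and then reduce it to pointwise second-moment bounds via subharmonicity of $|X_s|^2$, while the paper directly cites the weaker (and sufficient) criterion $\sup_s\sup_{z\in K}\E|X_s(z)|^2<\infty$ from Shirai, avoiding that extra reduction. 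These are minor variations in bookkeeping, not a different proof; both yield the stated weak convergence.
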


Note that if $(f(z))_{z\in H_0}\in\mathcal{A}(H_0)$, then $(f(x))_{x>0}\in C((0,\infty),\mc)$, where $C((0,\infty),\mc)$ is the space of $\mathbb{C}$-valued continuous functions defined on $(0,\infty)$ and endowed with the topology of locally uniform convergence. Theorem~\ref{thm:flt_main} immediately implies the following statement.
\begin{cor}\label{thm:flt_main_cor}
Assume~\eqref{eq:main_assumptions} and let
$\alpha>-1/2$. Then the following weak convergence holds in the space of probability measures on $C((0,\infty),\mc)$:
$$
\left(s^{1/2+\alpha}D(\alpha;1/2+sx)\right)_{x>0}~\Longrightarrow~(\mathcal{I}(\alpha;x))_{x>0},\quad s\to 0+.
$$
\end{cor}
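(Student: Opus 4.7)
The plan is to deduce Corollary~\ref{thm:flt_main_cor} from Theorem~\ref{thm:flt_main} by a one-line application of the continuous mapping theorem. Concretely, I would introduce the restriction map
$$
\rho:\mathcal{A}(H_0)\to C((0,\infty),\mc),\qquad \rho(f):=f\big|_{(0,\infty)},
$$
which is well defined because any function analytic on $H_0$ is in particular continuous on the real half-axis $(0,\infty)\subset H_0$. The key observation is that $\rho$ is continuous with respect to the two topologies in play.

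To verify continuity of $\rho$, let $K\subset (0,\infty)$ be an arbitrary compact subset. Viewed as a subset of $\mathbb{C}$ through the inclusion $\mr\hookrightarrow\mc$, the set $K$ is a compact subset of $H_0$ as well, since $\Re(x)=x>0$ for every $x\in K$. Consequently, if $f_n\to f$ in $\mathcal{A}(H_0)$, i.e. $\sup_{z\in K}|f_n(z)-f(z)|\to 0$ for every compact $K\subset H_0$, then in particular $\sup_{x\in K}|f_n(x)-f(x)|\to 0$ for every compact $K\subset (0,\infty)$, which is exactly convergence in $C((0,\infty),\mc)$ with its locally uniform topology. Hence $\rho$ is continuous.

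Now I apply the continuous mapping theorem to the weak convergence
$$
\left(s^{1/2+\alpha}D(\alpha;1/2+sz)\right)_{z\in H_0}\Longrightarrow (\mathcal{I}(\alpha;z))_{z\in H_0},\qquad s\to 0+,
$$
which is provided by Theorem~\ref{thm:flt_main}. Pushing both sides forward by $\rho$ yields the claimed weak convergence in $C((0,\infty),\mc)$ of the real-line restrictions. No genuine obstacle arises here; the entire argument is formal and relies only on the (trivial) topological fact that locally uniform convergence on compact subsets of $H_0$ is stronger than locally uniform convergence on compact subsets of $(0,\infty)$. All analytic and probabilistic content is already absorbed into Theorem~\ref{thm:flt_main}, so the proof reduces to a single invocation of the continuous mapping theorem.
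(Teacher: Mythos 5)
Your proposal is correct and matches the paper's reasoning: the paper simply observes that restriction to $(0,\infty)$ sends $\mathcal{A}(H_0)$ into $C((0,\infty),\mc)$ and declares the corollary an immediate consequence of Theorem~\ref{thm:flt_main}, which is precisely the continuity of the restriction map plus the continuous mapping theorem that you spell out. The only difference is that you make explicit what the paper leaves implicit.
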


In the real case when $\mathbb{P}\{\theta=0\}=1$, Corollary~\ref{thm:flt_main_cor} amounts to weak convergence of probability measures on $C((0,\infty),\mr)$ the space of real-valued continuous functions defined on $(0,\infty)$ endowed with the topology of locally uniform convergence. With a view towards a law of the iterated logarithm (Theorem \ref{main}) we only formulate a one-dimensional central limit theorem in this setting. As a preparation, using \eqref{limit1} with $z_1=z_2=1$ we conclude that $\mathcal{I}(\alpha;1)=\sigma_1\mathcal{I}_1(\alpha;1)$ has the same distribution as $(2^{-1-2\alpha} \Gamma(1+2\alpha)\sigma_1^2)^{1/2} \cdot {\rm Normal} (0,1)$, where ${\rm Normal} (0,1)$ denotes a random variable with the standard normal distribution. With this at hand, putting in Corollary \ref{thm:flt_main_cor} $x=1$ we arrive at the following result.
\begin{cor}\label{thm:flt_main_cor_real}
Assume that $\me\eta=0$, $\sigma_1^2=\me \eta^2\in (0,\infty)$ and let $\alpha>-1/2$. Then
$$
\Big(\frac{(2s)^{1+2\alpha}}{\Gamma(1+2\alpha)\sigma_1^2}\Big)^{1/2} \sum_{k\geq 2} \frac{(\log k)^\alpha}{k^{1/2+s}} \eta_k ~{\overset{{\rm d}}\longrightarrow}~ {\rm Normal} (0,1),\quad s\to 0+.
$$
\end{cor}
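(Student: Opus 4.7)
The plan is to deduce this corollary almost immediately from Corollary~\ref{thm:flt_main_cor} by specializing to a single point $x=1$ and identifying the Gaussian limit. Since the assumption $\mathbb{P}\{\theta=0\}=1$ (together with $\me\eta=0$ and $\sigma_1^2\in (0,\infty)$) is the degenerate case of \eqref{eq:main_assumptions} in which the $\theta$-component is absent, Corollary~\ref{thm:flt_main_cor} applies. The random series $D(\alpha;1/2+s)=\sum_{k\geq 2}(\log k)^{\alpha}k^{-(1/2+s)}\eta_k$ is real-valued, and the weak limit $\mathcal{I}(\alpha;\cdot)$ reduces, via the block structure of $\mathfrak{C}^{1/2}$, to the single stochastic integral $\sigma_1\mathcal{I}_1(\alpha;\cdot)$.

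First I would invoke Corollary~\ref{thm:flt_main_cor} to obtain weak convergence of $s^{1/2+\alpha}D(\alpha;1/2+sx)$ to $\mathcal{I}(\alpha;x)$ on $C((0,\infty),\mc)$. Then the evaluation functional $f\mapsto f(1)$ being continuous on this space, the continuous mapping theorem yields
\[
s^{1/2+\alpha}\sum_{k\geq 2}\frac{(\log k)^{\alpha}}{k^{1/2+s}}\eta_k ~\overset{{\rm d}}\longrightarrow~ \mathcal{I}(\alpha;1),\qquad s\to 0+.
\]

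Next I would identify the distribution of $\mathcal{I}(\alpha;1)$. It is a centered Gaussian random variable, and by formula~\eqref{limit2} of the proposition with $z_1=z_2=1$ and $\sigma_2^2=\rho=0$ one gets
\[
\me\left|\mathcal{I}(\alpha;1)\right|^2 = \frac{\Gamma(1+2\alpha)\sigma_1^2}{2^{1+2\alpha}}.
\]
(Equivalently, this follows from the It\^{o} isometry applied to $\int_0^\infty y^{\alpha}\eee^{-y}\,{\rm d}B_1(y)$.) Multiplying both sides of the displayed convergence by $(2^{1+2\alpha}/(\Gamma(1+2\alpha)\sigma_1^2))^{1/2}$ produces the desired standard normal limit.

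Essentially no obstacle remains; the only thing to justify is that the assumptions of Corollary~\ref{thm:flt_main_cor} are met once $\theta$ is set to $0$, which is immediate from $\me\eta=0$ and $0<\sigma_1^2<\infty$. The rest is a straightforward computation of a Gaussian variance and a rescaling.
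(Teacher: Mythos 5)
Your proposal is correct and follows essentially the same route as the paper: apply Corollary~\ref{thm:flt_main_cor} at $x=1$ (via the continuity of the evaluation functional), read off the variance of $\mathcal{I}(\alpha;1)$ from the covariance formula, and rescale. The only cosmetic difference is that the paper uses~\eqref{limit1} while you use~\eqref{limit2}; in the real case ($\sigma_2^2=\rho=0$ and $\mathcal{I}(\alpha;1)$ real) these give the identical variance $\Gamma(1+2\alpha)\sigma_1^2/2^{1+2\alpha}$.
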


\subsection{Convergence of zeros}

Given a locally compact metric space $\mathbb{X}$, denote by $M_p(\mathbb{X})$ the space of locally finite point measures on $\mathbb{X}$ endowed with the vague topology. A random element with values in $M_p(\mathbb{X})$ is called a {\it random point measure} on $\mathbb{X}$. For a function $f:\mc\to\mc$, which is analytic in a domain $\Lambda\subset\mc$ and does not vanish identically, denote by $\Zeros_{\Lambda}(f)$ the locally finite point measure on $\Lambda$ counting the zeros of $f$ in $\Lambda$ with multiplicities.

A direct consequence of Theorem~\ref{thm:flt_main} is the following limit theorem for the point process of zeros of $(D(\alpha;1/2+z))_{z\in H_0}$, see Lemma 4.3 and Remark 4.4 in \cite{Kabluchko+Klimovsky:2014}.
\begin{assertion}\label{prop:complex_zeros}
Under the assumptions of Theorem~\ref{thm:flt_main}, the following weak convergence holds in the space of probability measures on $M_p(H_0)$:
$$
\Zeros_{H_0}(D(\alpha; 1/2+s(\cdot)))~\Longrightarrow~\Zeros_{H_0}(\mathcal{I}(\alpha;\cdot)),\quad s\to 0+.
$$
\end{assertion}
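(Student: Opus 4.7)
The plan is to deduce Proposition \ref{prop:complex_zeros} from Theorem \ref{thm:flt_main} via the continuous mapping theorem, using the standard fact (Lemma 4.3 and Remark 4.4 in \cite{Kabluchko+Klimovsky:2014}) that the map $f\mapsto \Zeros_{H_0}(f)$ from $\mathcal{A}(H_0)\setminus\{0\}$ to $M_p(H_0)$ is continuous at every $f$ that does not vanish identically. The underlying analytic ingredient is Hurwitz's theorem: if $f_n\to f$ locally uniformly in $H_0$ and $f$ is not identically zero, then on every relatively compact open subset $U\subset H_0$ whose boundary $\partial U$ is disjoint from the zero set of $f$, the number of zeros of $f_n$ in $U$ equals the number of zeros of $f$ in $U$ for all sufficiently large $n$, and the individual zeros of $f_n$ converge to those of $f$ in the Hausdorff sense.

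First I would observe that multiplication by the nonzero scalar $s^{1/2+\alpha}$ does not affect the zero set, so
\[
\Zeros_{H_0}\bigl(D(\alpha;1/2+s(\cdot))\bigr)=\Zeros_{H_0}\bigl(s^{1/2+\alpha}D(\alpha;1/2+s(\cdot))\bigr).
\]
By Theorem~\ref{thm:flt_main}, the right-hand side is obtained by applying the zero map to a family of $\mathcal{A}(H_0)$-valued random elements that converges weakly to $\mathcal{I}_\alpha$. Hence, once continuity of the zero map is established at $\mathbb{P}$-almost every realization of $\mathcal{I}_\alpha$, the continuous mapping theorem (see, e.g., Theorem 2.7 in Billingsley's book) delivers the desired vague convergence in $M_p(H_0)$.

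The only genuine point to verify is that $\mathcal{I}_\alpha$ is almost surely not identically zero, so that the set of discontinuities of the zero map is a $\mathbb{P}$-null set with respect to the limit law. This follows from the covariance computation already recorded in the paper: by~\eqref{limit2}, $\E|\mathcal{I}(\alpha;z)|^2=\Gamma(1+2\alpha)(\sigma_1^2+\sigma_2^2)/(2\Re z)^{1+2\alpha}>0$ for every $z\in H_0$ under~\eqref{eq:main_assumptions}, so $\mathcal{I}(\alpha;z)\neq 0$ a.s.\ for each fixed $z$. Since $\mathcal{I}_\alpha$ is a Gaussian analytic function on the connected domain $H_0$, the event $\{\mathcal{I}_\alpha\equiv 0\}$ is a zero-probability event (any analytic function not identically zero has isolated zeros, and $\{\mathcal{I}(\alpha;z_0)=0\}$ has probability $0$ for a chosen $z_0$).

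The main obstacle, if any, is really only bookkeeping: checking that the non-degeneracy argument rules out the exceptional null set in the almost-sure continuity hypothesis of the Kabluchko--Klimovsky lemma. Everything else is an application of the continuous mapping theorem combined with the invariance of zero sets under multiplicative scaling. No separate tightness or finite-dimensional argument is needed because the functional convergence has already been secured in Theorem~\ref{thm:flt_main}.
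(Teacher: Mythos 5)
Your proposal is correct and is essentially the argument the paper has in mind: the paper proves Proposition~\ref{prop:complex_zeros} simply by invoking Lemma 4.3 and Remark 4.4 of \cite{Kabluchko+Klimovsky:2014}, which is exactly the Hurwitz-type continuity of the zero map together with the continuous mapping theorem that you spell out. Your verification that $\mathcal{I}_\alpha$ is almost surely not identically zero, via the pointwise variance from~\eqref{limit2}, is the right non-degeneracy check and matches the paper's implicit use of the same fact.
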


Recall that $\varphi$ denotes the linear fractional transformation (see ~\eqref{eq:lin_frac}), which maps the open unit disk $\mathbb{D}$ onto $H_0$. Proposition~\ref{prop:complex_zeros} in combination with representation~\eqref{eq:connection_with_random_power_series1} entails the following result.
\begin{cor}
Assume that $\E\eta^2=\E\theta^2$, $\rho=0$ and let $\alpha>-1/2$. Then
$$
\Zeros_{H_0}(D(\alpha;1/2+s(\cdot)))~\Longrightarrow~\varphi\left(\Zeros_{\mathbb{D}}(f_{\alpha,\mathbb{C}}(\cdot))\right),\quad s\to 0+,
$$
where $f_{\alpha,\mathbb{C}}$ is the hyperbolic Gaussian analytic function defined by~\eqref{eq:power_series_complex}.  If $\alpha=0$, then the point process $\Zeros_{\mathbb{D}}(f_{0,\mathbb{C}}(\cdot))$ is a determinantal point process with the joint intensity as given in Theorem 1 of \cite{Peres+Virag:2005}.
\end{cor}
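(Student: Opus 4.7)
The plan is to combine Proposition~\ref{prop:complex_zeros} with the time-change identity~\eqref{eq:connection_with_random_power_series1}. The first step is immediate: Proposition~\ref{prop:complex_zeros} already delivers
\[
\Zeros_{H_0}(D(\alpha;1/2+s(\cdot)))~\Longrightarrow~\Zeros_{H_0}(\mathcal{I}(\alpha;\cdot)),\qquad s\to 0+,
\]
so the whole task reduces to the distributional identity
\[
\Zeros_{H_0}(\mathcal{I}(\alpha;\cdot))\stackrel{d}{=}\varphi\!\left(\Zeros_{\mathbb{D}}(f_{\alpha,\mc}(\cdot))\right).
\]

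Under the symmetry hypotheses of the corollary, the covariance matrix simplifies to $\mathfrak{C}=\sigma_1^2 I$, so $\mathfrak{C}^{1/2}=\sigma_1 I$, and therefore
\[
\mathcal{I}(\alpha;z)=\sigma_1\int_{[0,\infty)}y^{\alpha}\eee^{-zy}\,{\rm d}B_{\mc}(y),\qquad z\in H_0,
\]
with $B_{\mc}=B_1+\ii B_2$ the standard complex Brownian motion. Plugging this into~\eqref{eq:connection_with_random_power_series1} exhibits $f_{\alpha,\mc}(z)$ as $\mathcal{I}(\alpha;\varphi(z))$ multiplied by the analytic and nonvanishing factor $\frac{2^{\alpha}(\Gamma(1+2\alpha))^{-1/2}}{\sigma_1(1-z)^{1+2\alpha}}$ on $\mathbb{D}$. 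Hence the zero measure of $f_{\alpha,\mc}$ on $\mathbb{D}$ coincides a.s.~with that of $z\mapsto\mathcal{I}(\alpha;\varphi(z))$, which, since $\varphi:\mathbb{D}\to H_0$ is a biholomorphism, equals the $\varphi$-preimage of $\Zeros_{H_0}(\mathcal{I}(\alpha;\cdot))$ with multiplicities preserved; pushing forward by $\varphi$ produces the required identity.

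To finish, I would invoke the continuous mapping theorem for the pushforward $\mu\mapsto\mu\circ\varphi^{-1}$ from $M_p(\mathbb{D})$ to $M_p(H_0)$. This map is vaguely continuous because $\varphi$ is a homeomorphism, so preimages of compact subsets of $H_0$ are compact in $\mathbb{D}$, and hence $g\circ\varphi$ is compactly supported and continuous on $\mathbb{D}$ for every compactly supported continuous $g:H_0\to\mr$. The determinantal assertion for $\alpha=0$ is then a direct citation of Theorem~1 in \cite{Peres+Virag:2005}. I do not foresee any real obstacle here: once the scalar $\sigma_1$ and the nonvanishing prefactor in~\eqref{eq:connection_with_random_power_series1} are carefully tracked, the argument is essentially the conformal identification already built into the constructions of Section~2.1.
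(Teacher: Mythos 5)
Your argument is correct and follows the same route the paper indicates (combining Proposition~\ref{prop:complex_zeros} with the conformal identification~\eqref{eq:connection_with_random_power_series1}); the paper only states this in one sentence, and your write-up supplies the details that are left implicit, in particular the observations that $\mathfrak{C}=\sigma_1^2 I$ under the symmetry hypotheses, that the prefactor in~\eqref{eq:connection_with_random_power_series1} is analytic and nonvanishing on $\mathbb{D}$ so it does not affect zeros or multiplicities, and that $\varphi$ being a biholomorphism makes the zero measures correspond via pushforward. One small point: the continuous mapping step you invoke at the end is not really needed for the conclusion, since once you establish $\varphi\bigl(\Zeros_{\mathbb{D}}(f_{\alpha,\mathbb{C}}(\cdot))\bigr)\stackrel{d}{=}\Zeros_{H_0}(\mathcal{I}(\alpha;\cdot))$ you may simply substitute the limit in Proposition~\ref{prop:complex_zeros}; the continuity of the pushforward under a homeomorphism is only needed to confirm that $\varphi\bigl(\Zeros_{\mathbb{D}}(f_{\alpha,\mathbb{C}}(\cdot))\bigr)$ is a well-defined random element of $M_p(H_0)$, which is essentially automatic.
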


Using the fact that, in the special case $\alpha=0$, the point process $\Zeros_{\mathbb{D}}(f_{0,\mathbb{C}}(\cdot))$ is determinantal, one can
deduce further information on the number of zeros in special disks.
Fix $r\in (0,1)$ and note that the linear fractional transformation $\varphi$ maps the open disk $\mathbb{D}(r):=\{z:|z|<r\}$ bijectively to the open disk
\begin{equation}\label{eq:tilde_disk}
\widetilde{\mathbb{D}}(r):=\left\{z\in\mathbb{C}:\left|z-\frac{1+r^2}{1-r^2}\right|<\frac{2r}{	1-r^2}\right\}\subset H_0.
\end{equation}
This can be proved by checking that $|\varphi^{-1}(z)|<r$ if, and only if, $z\in\widetilde{\mathbb{D}}(r)$. Furthermore, if $0<r_1<r_2<1$, then $\widetilde{\mathbb{D}}(r_1)\subset \widetilde{\mathbb{D}}(r_2)$ and $\widetilde{\mathbb{D}}(r)\uparrow H_0$, as $r\uparrow 1$. Also, for every $r\in (0,1)$,
$$
\varphi\left(\Zeros_{\mathbb{D}}(f_{0,\mathbb{C}}(\cdot))\right)(
\widetilde{\mathbb{D}}(r))
=\Zeros_{\mathbb{D}}(f_{0,\mathbb{C}}(\cdot))(
\varphi^{-1}(\widetilde{\mathbb{D}}(r)))
=\Zeros_{\mathbb{D}}(f_{0,\mathbb{C}}(\cdot))(
\mathbb{D}(r))=:N_r,
$$
that is, $N_r$ is the number of zeros of $f_{0,\mathbb{C}}$ lying inside
$\mathbb{D}(r)$. According to Corollary 5.1.7 in \cite{Hough+Krishnapur+Peres+Virag},
\begin{equation}\label{eq:number_of_zeros_gen_func}
\mathbb{E} (1+t)^{N_r}=\prod_{k\geq 1}(1+r^{2k}t),\quad t\in\mr.
\end{equation}
Further properties of the random variable $N_r$ can be found in Corollary 5.1.8 in \cite{Hough+Krishnapur+Peres+Virag}.

Putting things together we obtain
\begin{cor}
Assume that $\E\eta^2=\E\theta^2$ and $\rho=0$. Let $r\in (0,1)$ be fixed and denote by $N_r(s)$ the number of zeros of $z\mapsto D(0;1/2+sz)$ in the disk $\widetilde{\mathbb{D}}(r)$ defined by~\eqref{eq:tilde_disk}. Then
$$
N_r(s)~{\overset{{\rm d}}\longrightarrow}~N_r,\quad s\to 0+,
$$
where $N_r$ is a random variable with the generating function given by~\eqref{eq:number_of_zeros_gen_func}.
\end{cor}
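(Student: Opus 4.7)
The plan is to derive this from the preceding corollary by reading off the counting functional on the relatively compact open disk $\widetilde{\mathbb{D}}(r)\subset H_0$. Set $\mu_s:=\Zeros_{H_0}(D(0;1/2+s(\cdot)))$ and $\mu:=\varphi(\Zeros_{\mathbb{D}}(f_{0,\mathbb{C}}(\cdot)))$, so that the preceding corollary yields $\mu_s\Longrightarrow\mu$ in $M_p(H_0)$ endowed with the vague topology. A standard fact in the theory of random point measures states that if $A$ is a relatively compact Borel subset of $H_0$ with $\mu(\partial A)=0$ almost surely, then the evaluation functional $\nu\mapsto\nu(A)$ is continuous at $\mu$ in the vague topology. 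Combined with the continuous mapping theorem, this gives $\mu_s(A)\overset{{\rm d}}{\longrightarrow}\mu(A)$.

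The only nontrivial step is therefore to check the boundary condition with $A=\widetilde{\mathbb{D}}(r)$. Since $\varphi$ restricts to a bijection $\partial\mathbb{D}(r)\to\partial\widetilde{\mathbb{D}}(r)$, this reduces to verifying that
$$
\Zeros_{\mathbb{D}}(f_{0,\mathbb{C}}(\cdot))(\partial\mathbb{D}(r))=0\quad\text{a.s.}
$$
The first intensity measure of zeros of a nondegenerate Gaussian analytic function on a planar domain is absolutely continuous with respect to planar Lebesgue measure, by the Edelman--Kostlan formula (for the hyperbolic Gaussian analytic function $f_{0,\mathbb{C}}$ the explicit density equals $\pi^{-1}(1-|z|^2)^{-2}$). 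Since $\partial\mathbb{D}(r)$ is a circle and hence has Lebesgue measure zero in $\mc$, the expected, and thus almost sure, number of zeros of $f_{0,\mathbb{C}}$ on $\partial\mathbb{D}(r)$ is zero.

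With the boundary condition established, $N_r(s)=\mu_s(\widetilde{\mathbb{D}}(r))\overset{{\rm d}}{\longrightarrow}\mu(\widetilde{\mathbb{D}}(r))$. Since $\varphi$ restricts to a bijection $\mathbb{D}(r)\to\widetilde{\mathbb{D}}(r)$, we have by the definition of a pushforward measure
$$
\mu(\widetilde{\mathbb{D}}(r))=\Zeros_{\mathbb{D}}(f_{0,\mathbb{C}}(\cdot))(\varphi^{-1}(\widetilde{\mathbb{D}}(r)))=\Zeros_{\mathbb{D}}(f_{0,\mathbb{C}}(\cdot))(\mathbb{D}(r))=N_r,
$$
and the generating function formula \eqref{eq:number_of_zeros_gen_func} is exactly Corollary 5.1.7 of \cite{Hough+Krishnapur+Peres+Virag}. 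The only mild obstacle in this outline is the null-boundary check, which is however a routine consequence of the absolute continuity of the intensity of GAF zeros and is worth recording explicitly, as it is what licenses the passage from vague convergence of point measures to distributional convergence of counts on a fixed disk.
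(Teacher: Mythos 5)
Your proposal is correct and follows essentially the same route as the paper: it deduces the claim from the preceding corollary's weak convergence of zero point processes by evaluating the counting functional $\nu\mapsto\nu(\widetilde{\mathbb{D}}(r))$ and identifying the limit with $N_r$ via $\varphi^{-1}(\widetilde{\mathbb{D}}(r))=\mathbb{D}(r)$. Where the paper simply says ``putting things together,'' you helpfully make explicit the a.s.\ null-boundary condition $\Zeros_{\mathbb{D}}(f_{0,\mathbb{C}})(\partial\mathbb{D}(r))=0$ needed for the continuous mapping step, and correctly justify it via the Edelman--Kostlan absolute continuity of the first intensity.
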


In the real case when $\mathbb{P}\{\theta=0\}=1$ it is more natural to consider the point process of real zeros. A counterpart of Proposition~\ref{prop:complex_zeros} for the real zeros is given below. For $a,b\in \mr\cup\{\pm \infty\}$, $a<b$, let $\Zeros_{(a,b)}(f)$ denote the locally finite point measure on $(a,b)$ counting the real zeros (with multiplicities) of a function $f$ analytic in some domain of $\mc$  containing $(a,b)$.
\begin{assertion}\label{prop:real_zeros}
Assume that $\mathbb{P}\{\theta=0\}=1$, $\me\eta=0$, $\sigma_1^2=\me \eta^2\in (0,\infty)$ and let $\alpha>-1/2$. Then the following weak convergence holds in the space of probability measures on $M_p((0,\infty))$:
\begin{equation}\label{eq:real_zeros_pp_conv}
\Zeros_{(0,\infty)}(D(\alpha; 1/2+s(\cdot)))~\Longrightarrow~\Zeros_{(0,\infty)}(\mathcal{I}(\alpha; \cdot)),\quad s\to 0+.
\end{equation}
\end{assertion}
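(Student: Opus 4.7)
The plan is to derive point-process convergence on the real axis from the functional limit theorem of Corollary~\ref{thm:flt_main_cor} (equivalently, Theorem~\ref{thm:flt_main}) by a real-zero refinement of the Hurwitz-type argument that underlies Proposition~\ref{prop:complex_zeros}. Write $f_s(z):=s^{1/2+\alpha}D(\alpha;1/2+sz)$ and $f(z):=\mathcal{I}(\alpha;z)$ for $z\in H_0$; since positive scaling leaves the zero set invariant, it suffices to prove \eqref{eq:real_zeros_pp_conv} with $D(\alpha;1/2+s\cdot)$ replaced by $f_s$. Because $\mathbb{P}\{\theta=0\}=1$ and both the $\eta_k$'s and $B_1$ are real-valued, $f_s$ and $f$ each satisfy the Schwarz reflection identity $\overline{g(z)}=g(\bar z)$, so their complex zero sets are symmetric about the real axis.

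By Theorem~\ref{thm:flt_main} combined with Skorokhod's representation theorem I may assume $f_s\to f$ almost surely in $\mathcal{A}(H_{0})$. The main analytic input is that almost surely every real zero of $f$ in $(0,\infty)$ is simple. I would verify this via Bulinskaya's lemma applied to the real centered Gaussian process $(\mathcal{I}(\alpha;x))_{x>0}$, which has real-analytic paths. A direct computation shows that the covariance matrix of $(\mathcal{I}(\alpha;x),\mathcal{I}'(\alpha;x))$ equals, up to $\sigma_1^2$,
$$
\left(\begin{matrix}\Gamma(1+2\alpha)(2x)^{-1-2\alpha}&-\Gamma(2+2\alpha)(2x)^{-2-2\alpha}\\-\Gamma(2+2\alpha)(2x)^{-2-2\alpha}&\Gamma(3+2\alpha)(2x)^{-3-2\alpha}\end{matrix}\right),
$$
whose determinant is strictly positive since $\Gamma(2+2\alpha)^2<\Gamma(1+2\alpha)\Gamma(3+2\alpha)$ by the strict Cauchy--Schwarz inequality. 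Hence the joint density of $(\mathcal{I}(\alpha;x),\mathcal{I}'(\alpha;x))$ is bounded uniformly on compact subsets of $(0,\infty)$, and Bulinskaya's lemma rules out the occurrence of any $x_0>0$ with $f(x_0)=f'(x_0)=0$.

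Fix a compact interval $[a,b]\subset(0,\infty)$ whose endpoints avoid the almost surely discrete zero set of $f$, and let $x_1<\ldots<x_k$ be the real zeros of $f$ in $[a,b]$. Choose pairwise disjoint open disks $B(x_i,\delta_i)\subset H_0$, symmetric about the real axis, small enough that each contains no other zero of $f$; this is possible by isolation of zeros of the almost surely analytic function $f$. Since $f_s\to f$ uniformly on the closure of each $B(x_i,\delta_i)$, Hurwitz's theorem yields that for all $s$ sufficiently small $f_s$ has exactly one zero in $B(x_i,\delta_i)$ counted with multiplicity, the count being one because the real zero of $f$ is simple. Schwarz reflection now forces this unique zero to be real: a non-real zero $z_s\in B(x_i,\delta_i)$ would pair with its conjugate $\bar z_s\ne z_s$ inside the symmetric disk, contradicting the Hurwitz count. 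On the complement $[a,b]\setminus\bigcup_i B(x_i,\delta_i)$ the modulus $|f|$ is bounded below by a positive constant, so uniform convergence prevents $f_s$ from vanishing there for small $s$. Consequently, for every continuous $\varphi:(0,\infty)\to\mathbb{R}$ supported in $[a,b]$, one has $\int\varphi\,d\Zeros_{(0,\infty)}(f_s)\to\int\varphi\,d\Zeros_{(0,\infty)}(f)$ almost surely; exhausting $(0,\infty)$ by such intervals yields vague convergence in $M_p((0,\infty))$.

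The principal obstacle is ensuring that the unique zero of $f_s$ produced by Hurwitz inside each small disk $B(x_i,\delta_i)$ is genuinely real, rather than a complex conjugate pair that has coalesced near the real axis. This requires the simultaneous use of two ingredients: the almost sure simplicity of real zeros of $f$, which constrains the total Hurwitz multiplicity in each disk to one, and the Schwarz reflection symmetry, which then forbids this single zero from lying off the real axis. The simplicity step, via Bulinskaya and the strict log-convexity of $\Gamma$, constitutes the genuine new content beyond the complex-zero result of Proposition~\ref{prop:complex_zeros}.
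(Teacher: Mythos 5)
Your proposal is correct, and its global structure mirrors the paper's: functional limit theorem $\Rightarrow$ almost-sure simplicity of the real zeros of the limit process $\Rightarrow$ continuity of the real-zero-counting map on the set of functions without multiple real zeros $\Rightarrow$ weak convergence of point processes. The differences are in how the two intermediate ingredients are supplied. For the continuity of $f\mapsto\Zeros_{(0,\infty)}(f)$ on the set of functions in $\mathcal{A}_{(0,\infty)}(H_0)$ with only simple real zeros, the paper simply invokes Lemma~4.2 of \cite{Iksanov+Kabluchko+Marynych:2016}, whereas you reprove this lemma from scratch via Hurwitz's theorem plus Schwarz reflection symmetry — a correct and self-contained substitute, with the reflection step doing exactly the work you say (ruling out conjugate pairs once the Hurwitz count is one). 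For the simplicity of the real zeros of $\mathcal{I}_\alpha$, the paper uses Ylvisaker's non-tangency theorem, which only requires ${\rm Var}(\mathcal{I}(\alpha;x))>0$ for each $x>0$ — a one-line check from \eqref{limit1} — whereas you go through Bulinskaya's lemma, which demands non-degeneracy of the joint law of $(\mathcal{I}(\alpha;x),\mathcal{I}'(\alpha;x))$. Your covariance matrix computation and the strict inequality $\Gamma(2+2\alpha)^2<\Gamma(1+2\alpha)\Gamma(3+2\alpha)$ (strict Cauchy--Schwarz / log-convexity of $\Gamma$) are correct, so the Bulinskaya route works, but it is strictly more effort than Ylvisaker's criterion for this particular step. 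One small omission in your write-up: Skorokhod representation applies along sequences $s_n\to 0+$ in a Polish space, so you should note that vague convergence in $M_p((0,\infty))$ is sequential and pass to arbitrary subsequences; this is routine but worth stating.
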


Note that the function $\varphi$ maps bijectively the open interval $(-1,1)$ to the positive half-line $(0,\infty)$. In the case $\alpha=0$ the following holds true in view of~\eqref{eq:connection_with_random_power_series2}.  \begin{cor}
In the setting of Proposition~\ref{prop:real_zeros},
\begin{equation*}
\Zeros_{(0,\infty)}(D(0; 1/2+s(\cdot)))~\Longrightarrow~\varphi(\Zeros_{(-1,1)}(f_{\mr}(\cdot))),\quad s\to 0+,
\end{equation*}
where $f_{\mr}$ is the random power series defined by~\eqref{eq:power_series_real}. The point process $\Zeros_{(-1,1)}(f_{\mathbb{R}}(\cdot))$ is a Pfaffian point process with the joint intensity described by Theorem 2.1 in \cite{Matsumoto+Shirai:2013}.
\end{cor}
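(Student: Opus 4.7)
The plan is to deduce this corollary directly from Proposition~\ref{prop:real_zeros} specialised to $\alpha=0$, combined with the deterministic identity~\eqref{eq:connection_with_random_power_series2} that links $\mathcal{I}(0;\cdot)$ and $f_{\mr}$. Setting $\alpha=0$ in Proposition~\ref{prop:real_zeros} immediately yields
\begin{equation*}
\Zeros_{(0,\infty)}(D(0; 1/2+s(\cdot)))~\Longrightarrow~\Zeros_{(0,\infty)}(\mathcal{I}(0; \cdot)),\quad s\to 0+,
\end{equation*}
in the space $M_p((0,\infty))$. Hence the problem reduces to identifying the right-hand side, in distribution, with the push-forward $\varphi(\Zeros_{(-1,1)}(f_{\mr}(\cdot)))$.

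To carry out this identification I use~\eqref{eq:connection_with_random_power_series2}, which reads $f_{\mr}(z) = (1-z)^{-1}\mathcal{I}(0;\varphi(z))$ for $z\in\mathbb{D}$. Since the factor $(1-z)^{-1}$ is analytic and nowhere zero on $\mathbb{D}$, the real zeros of $f_{\mr}$ in $(-1,1)$ coincide, as locally finite point measures, with the real zeros of the function $z\mapsto \mathcal{I}(0;\varphi(z))$. The restriction $\varphi|_{(-1,1)}\colon (-1,1)\to(0,\infty)$ is a proper increasing homeomorphism, so the substitution $x=\varphi(z)$ gives the almost sure identity
\begin{equation*}
\varphi\bigl(\Zeros_{(-1,1)}(z\mapsto\mathcal{I}(0;\varphi(z)))\bigr) = \Zeros_{(0,\infty)}(\mathcal{I}(0;\cdot))
\end{equation*}
in $M_p((0,\infty))$. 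Combining these two observations, $\varphi(\Zeros_{(-1,1)}(f_{\mr}(\cdot)))$ and $\Zeros_{(0,\infty)}(\mathcal{I}(0; \cdot))$ have the same distribution, and the desired weak convergence follows at once.

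For the Pfaffian claim no further probabilistic work is required: by~\eqref{eq:power_series_real}, $f_{\mr}$ has, as a process on $\mathbb{D}$, the same law as the Gaussian random power series $\sum_{n\geq 0}\mathcal{N}^{\mr}_n z^n$, so Theorem 2.1 of~\cite{Matsumoto+Shirai:2013} applies verbatim, yielding both the Pfaffian structure of $\Zeros_{(-1,1)}(f_{\mr}(\cdot))$ and the explicit formula for its joint intensities. The entire argument is therefore bookkeeping on top of Proposition~\ref{prop:real_zeros}; the only subtlety that deserves brief mention is vague continuity of the push-forward $\mu\mapsto \varphi_\ast\mu$ from $M_p((-1,1))$ to $M_p((0,\infty))$, which is automatic because $\varphi|_{(-1,1)}$ is a proper homeomorphism of locally compact spaces. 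No substantive obstacle is encountered.
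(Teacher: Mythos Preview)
Your argument is correct and matches the paper's own justification: the corollary is stated to hold ``in view of~\eqref{eq:connection_with_random_power_series2}'' together with Proposition~\ref{prop:real_zeros}, and you have simply spelled out the evident details of that deduction. The only point one might add is that the normalisation $\sigma_1^2=1$ assumed when writing~\eqref{eq:connection_with_random_power_series2} is harmless here because a nonzero scalar factor does not affect zeros.
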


An important feature of random Dirichlet series $D(\alpha;\cdot)$ revealed by Propositions~\ref{prop:complex_zeros} and~\ref{prop:real_zeros} is that the distributions of the limit point processes of zeros only depend on the covariance structure of $(\eta,\theta)$ and do not depend on the distribution of $(\eta,\theta)$. Such a phenomenon is usually referred to as {\it local universality} and has already been observed for many models, see, for instance, \cite{Angst+Pham+Poly:2018, Angst+Poly:2022, Iksanov+Kabluchko+Marynych:2016, Kabluchko+Zaporozhets:2014, Tao+Vu:2015}.

\section{A law of the iterated logarithm}

Our second main result given in Theorem \ref{main} is a law of the iterated logarithm. As usual, a hint concerning the form of this law is given by the central limit theorem, Corollary \ref{thm:flt_main_cor_real}.

For a family $(x_t)$ we denote by $C((x_t))$ the set of its limit points.
\begin{thm}\label{main}
Assume that $\mathbb{P}\{\theta=0\}=1$, $\me\eta=0$, $\sigma_1^2:=\me \eta^2\in (0,\infty)$ and let $\alpha>-1/2$. Then
\begin{equation}\label{auxlimsup}
{\lim\sup}_{s\to 0+} \Big(\frac{s^{1+2\alpha}}{\log\log 1/s }\Big)^{1/2}\sum_{k\geq 2}\frac{(\log k)^\alpha}{k^{1/2+s}} \eta_k= \Big(\frac{\sigma_1^2 \Gamma(1+2\alpha)}{2^{2\alpha}}\Big)^{1/2} \quad\text{{\rm a.s.}}
\end{equation}
and
\begin{equation}\label{auxliminf}
{\lim \inf}_{s\to 0+}\Big(\frac{s^{1+2\alpha}}{\log\log 1/s }\Big)^{1/2}\sum_{k\geq 2}\frac{(\log k)^\alpha}{k^{1/2+s}} \eta_k=-\Big(\frac{\sigma_1^2 \Gamma(1+2\alpha)}{2^{2\alpha}}\Big)^{1/2} \quad\text{{\rm a.s.}}
\end{equation}
In particular,
\begin{equation}\label{eq:limset}
C\bigg(\bigg(\Big(\frac{2^{2\alpha}}{\sigma_1^2 \Gamma(1+2\alpha)}\frac{s^{1+2\alpha}}{\log\log 1/s}\Big)^{1/2}\sum_{k\geq 2}\frac{(\log k)^\alpha}{k^{1/2+s}}\eta_k: s\in (0, 1/\eee)\bigg)\bigg)=[-1,1]\quad\text{{\rm a.s.}}
\end{equation}
\end{thm}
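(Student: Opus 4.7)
I will prove (\ref{auxlimsup}); (\ref{auxliminf}) follows by applying it to $(-\eta_k)$, and (\ref{eq:limset}) is obtained from (\ref{auxlimsup})--(\ref{auxliminf}) together with the a.s.\ continuity on $(0,\infty)$ of the random function $s\mapsto S(s):=\sum_{k\geq 2}(\log k)^{\alpha}k^{-1/2-s}\eta_k$, which is inherited from the a.s.\ analyticity of $D(\alpha;\cdot)$ on $\{\Re z>1/2\}$. The substitution $u=2s\log k$ in
\[
V(s):=\mathrm{Var}\,S(s)=\sigma_1^2\sum_{k\geq 2}(\log k)^{2\alpha}k^{-1-2s}
\]
yields $V(s)\sim\sigma_1^2\Gamma(1+2\alpha)(2s)^{-1-2\alpha}$ as $s\to 0+$; consequently (\ref{auxlimsup}) is equivalent to the Hartman--Wintner form
\[
\limsup_{s\to 0+}\frac{S(s)}{\sqrt{2V(s)\log\log(1/s)}}=1\qquad\text{a.s.}
\]

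Write $c_k(s):=(\log k)^{\alpha}k^{-1/2-s}$. The strategy adapts the classical Hartman--Wintner argument to this one-parameter family of infinite weighted sums via a suitable truncation: one writes $\eta_k=\tilde\eta_k+R_k$ with $\tilde\eta_k$ obtained from $\eta_k$ by truncation at level $\sqrt k$ and recentering, and then verifies that (i) the residual $\sum_k c_k(s)R_k$ is centered with variance $\sum_k c_k(s)^{2}\E(\eta^{2}\mathbf 1_{\{|\eta|>\sqrt k\}})=o(V(s))$ (since $\E(\eta^{2}\mathbf 1_{\{|\eta|>\sqrt k\}})\to 0$), which an $L^{2}$-maximal inequality on a dense grid of $s$ together with Borel--Cantelli upgrade to the uniform a.s.\ bound $|\sum_k c_k(s)R_k|=o(\sqrt{V(s)\log\log(1/s)})$ on $s\in(0,1]$; and (ii) the truncated terms $c_k(s)\tilde\eta_k$ satisfy $|c_k(s)\tilde\eta_k|\le 2(\log k)^{\alpha}k^{-s}$, whose maximum over $k\ge 2$ is $O(s^{-\alpha\vee 0})=o(\sqrt{V(s)/\log\log(1/s)})$ as $s\to 0+$ (using $\alpha>-1/2$) -- precisely the regime in which Bernstein's inequality yields the sharp Gaussian tail bounds required for the LIL. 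Thus it suffices to establish the LIL for the bounded-coefficient sum $\tilde S(s):=\sum_k c_k(s)\tilde\eta_k$.

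The remainder is the classical two-step discretisation. Fix $\lambda\in(0,1)$ and set $s_n:=\lambda^n$. For the \emph{upper bound} Bernstein applied to $\tilde S(s_n)$ gives $\P\{\tilde S(s_n)\ge(1+\varepsilon)\sqrt{2V(s_n)\log\log(1/s_n)}\}=O(n^{-(1+\varepsilon)^{2}+o(1)})$, and Borel--Cantelli yields $\limsup_n \tilde S(s_n)/\sqrt{2V(s_n)\log\log(1/s_n)}\le 1$. For the matching \emph{lower bound} I partition the indices into the disjoint blocks $I_n:=\{k:\eee^{A/s_{n-1}}\le k<\eee^{A/s_n}\}$ for a fixed parameter $A\geq 1$; the independent block sums $X_n:=\sum_{k\in I_n}c_k(s_n)\tilde\eta_k$ satisfy, by the same Laplace calculation, $\mathrm{Var}\,X_n/V(s_n)\to\beta(A,\lambda):=\Gamma(1+2\alpha)^{-1}\int_{2A\lambda}^{2A}u^{2\alpha}\eee^{-u}du$, and a Feller-type Gaussian lower tail estimate combined with the second Borel--Cantelli lemma for independent events yields $\limsup_n X_n/\sqrt{2V(s_n)\log\log(1/s_n)}\ge\sqrt{\beta(A,\lambda)}$ a.s. The complementary piece $\tilde S(s_n)-X_n$ is absorbed by applying the already-established upper bound to it, and choosing $A$ sufficiently large and then $\lambda$ sufficiently small makes $\beta(A,\lambda)$ arbitrarily close to $1$. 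Finally, an oscillation bound on $[s_{n+1},s_n]$ -- $\mathrm{Var}(\tilde S(s)-\tilde S(s_n))\le\delta(\lambda)V(s_n)$ uniformly in $s\in[s_{n+1},s_n]$, with $\delta(\lambda)\to 0$ as $\lambda\uparrow 1$, upgraded via a L\'evy/Hoffmann--J{\o}rgensen-type maximal inequality on a fine grid together with Borel--Cantelli to $\sup_{s\in[s_{n+1},s_n]}|\tilde S(s)-\tilde S(s_n)|\le\varepsilon(\lambda)\sqrt{V(s_n)\log\log(1/s_n)}$ a.s.\ for all large $n$ -- combined with letting $\lambda\uparrow 1$ fills the gaps between consecutive $s_n$ and completes the proof of (\ref{auxlimsup}). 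The main technical obstacle is this uniform oscillation estimate: the pointwise variance estimate gives only an in-probability statement, and recovering the almost-sure supremum on the precise LIL scale requires the maximal inequality together with a careful chaining whose grid spacing is tuned to the shrinking variance $\delta(\lambda)V(s_n)$.
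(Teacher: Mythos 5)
Your plan follows the classical Hartman--Wintner route: truncate at $\sqrt{k}$, control the residual, establish the upper bound via Bernstein on a geometric grid $s_n=\lambda^n$, the lower bound via independent index blocks $I_n$ plus a Gaussian lower-tail estimate and the second Borel--Cantelli lemma, and finally an oscillation estimate to fill the gaps. This is a genuinely different parameterisation from the paper's, which uses two super-geometric grids ($s_n=\exp(-n^{1-\gamma})$ for the upper bound and $\ms_n=\exp(-n^{1+\gamma})$ for the lower bound), an $s$-dependent truncation via the events $\mathcal{A}_{k,\rho}(s)$, and for the lower bound an exponential change of measure; the change of measure and your Feller-type tail estimate serve the same purpose, but the super-geometric grid has ratio $s_{n+1}/s_n\to 1$ built in, so the $\lambda\uparrow 1$ passage you invoke at the very end is absorbed into the grid itself. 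The paper also splits off the initial block $k\le\lfloor 1/s\rfloor$ and disposes of it via the LIL for the partial-sum random walk, a step your plan avoids because the $\sqrt{k}$-truncation already makes Bernstein applicable to all $k\ge 2$ at once.

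There is, however, a genuine gap in your step (i), the control of the residual $\sum_k c_k(s)R_k$. You propose to deduce from ${\rm Var}\big(\sum_k c_k(s)R_k\big)=o(V(s))$, via an $L^2$ maximal inequality and Borel--Cantelli, the uniform a.s.\ bound $o(\sqrt{V(s)\log\log(1/s)})$. But a second-moment (Chebyshev/Kolmogorov/Doob) maximal inequality yields only
$$
\P\Big\{\Big|\sum_k c_k(s_n)R_k\Big|>\varepsilon\sqrt{V(s_n)\log\log(1/s_n)}\Big\}=o\Big(\frac{1}{\log\log(1/s_n)}\Big)=o\Big(\frac{1}{\log n}\Big)
$$
on the grid $s_n=\lambda^n$; this is \emph{not} summable, and Borel--Cantelli does not apply. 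No refinement of the grid helps, since a single power of $\log\log$ is exactly what a second-moment method cannot beat at the LIL scale. The residual with your $\sqrt{k}$-truncation can be controlled, but by a decomposition rather than a maximal inequality: the random piece $\sum_k c_k(s)\eta_k\1_{\{|\eta_k|>\sqrt{k}\}}$ is a.s.\ a finite sum (since $\sum_k\P\{|\eta|>\sqrt{k}\}\le\E\eta^2<\infty$ and the first Borel--Cantelli lemma applies) and hence $O(1)$ as $s\to 0+$, while the deterministic recentering $\sum_k c_k(s)\E\big(\eta\1_{\{|\eta|>\sqrt{k}\}}\big)$ satisfies
$$
\Big|\sum_k c_k(s)\E\big(\eta\1_{\{|\eta|>\sqrt{k}\}}\big)\Big|\le \Big(\sup_{k\ge 2}(\log k)^\alpha k^{-s}\Big)\sum_k k^{-1/2}\E\big(|\eta|\1_{\{|\eta|>\sqrt{k}\}}\big)=O\big(s^{-(\alpha\vee 0)}\big)=o\big(\sqrt{V(s)\log\log(1/s)}\big),
$$
where the middle factor is finite by Fubini ($\sum_k k^{-1/2}\E\big(|\eta|\1_{\{|\eta|>\sqrt{k}\}}\big)\le 2\E\eta^2<\infty$). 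The paper sidesteps this difficulty altogether: its truncation threshold (the events $\mathcal{A}_{k,\rho}(s)$) is chosen so large that, for $k>\lfloor 1/s\rfloor$ and all sufficiently small $s$, the indicators $\1_{\mathcal{A}_{k,\rho}(s)}$ are a.s.\ identically zero, a consequence of $\sup_{k\ge 1}|\eta_k|k^{-1/2}<\infty$ a.s.\ rather than of any second-moment estimate.
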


\begin{rem}In Theorem 1.1 of \cite{Aymone+Frometa+Misturini:2020} the following law of the iterated logarithm was proved under ~\eqref{eq:simplest_case}:
\begin{equation}\label{eq:motiv}
{\lim\sup}_{s\to 0+} \Big(\frac{s}{\log\log 1/s}\Big)^{1/2}\sum_{k\geq 1}\frac{1}{k^{1/2+s}} \eta_k= 1.
\end{equation}
The present article has partly been motivated by our desire to extend \eqref{eq:motiv} to centered i.i.d.\ random variables with finite second moment and more general weights. In particular, here, finiteness of the exponential moments of $\eta$ is not assumed which leads to additional technical complications.
\end{rem}

\section{Proof of Theorem~\ref{thm:flt_main} and Proposition~\ref{prop:real_zeros}}

\subsection{Proof of Theorem~\ref{thm:flt_main}}

We first prove weak convergence of the finite-dimensional distributions and then check tightness in the space $\mathcal{A}(H_0)$.

For every fixed $z\in H_0$ and $s>0$, the variable
$$
s^{1/2+\alpha}D(\alpha; 1/2+sz)=s^{1/2+\alpha}\sum_{k\geq 2}\frac{(\log k)^{\alpha}}{k^{1/2+sz}}(\eta_k+{\rm i}\theta_k)
$$
is an (infinite) sum of centered independent random variables with finite second moments. Thus, to check weak convergence of the finite-dimensional distributions it is enough to check convergence of covariances and then the Lindeberg-Feller condition.

\vspace{2mm}
\noindent
{\sc Convergence of covariances.} For fixed $z_1,z_2\in H_0$,
\begin{align*}
&\hspace{-3cm} s^{1+2\alpha}\me \left(D(\alpha; 1/2+s z_1)D(\alpha; 1/2+s z_2)\right)\\
&=s^{1+2\alpha}\E\left(\sum_{\ell \geq 2}\frac{(\log \ell)^{\alpha}}{\ell^{1/2+ sz_1}}(\eta_\ell +{\rm i}\theta_\ell)\right)\left(\sum_{j\geq 2}\frac{(\log j)^{\alpha}}{j^{1/2+s z_2}}(\eta_j+{\rm i}\theta_j)\right)\\
&=s
^{1+2\alpha}\sum_{k\geq 2}\frac{(\log k)^{2\alpha}}{k^{1+s(z_1+z_2)}}\me (\eta_k+\ii \theta_k)^2\\
&=(\sigma_1^2-\sigma_2^2+2\ii \rho)s^{1+2\alpha}\sum_{k\geq 2}\frac{(\log k)^{2\alpha}}{k^{1+s(z_1+z_2)
}}\\
&\to \frac{\Gamma(1+2\alpha)(\sigma_1^2-\sigma_2^2+2\ii \rho)}{(z_1+z_2)^{1+2\alpha}},\quad s\to 0+,
\end{align*}
where the convergence is secured by Lemma~\ref{lem:zeta_derivatives} in the Appendix. The right-hand side is equal to $\me \left(\mathcal{I}(\alpha;z_1)\mathcal{I}(\alpha;z_2)\right)$ according to~\eqref{limit1}. The convergence
$$
s^{1+2\alpha}\me \left(D(\alpha; 1/2+s z_1)\overline{D(\alpha; 1/2+s z_2)}\right)~\to~\me \left(\mathcal{I}(\alpha;z_1)\overline{\mathcal{I}(\alpha;z_2)}\right),\quad s\to 0+
$$
follows analogously, by another application of Lemma~\ref{lem:zeta_derivatives}.

\vspace{2mm}
\noindent
{\sc The Lindeberg-Feller sufficient condition.} It suffices to show that, for every fixed $z\in H_0$ and all $\varepsilon>0$,
\begin{equation}\label{eq:lind-feller}
\lim_{s\to 0+}s^{1+2\alpha}\sum_{k\geq 2}\E\left(\left|\frac{(\log k)^{\alpha}}{k^{1/2+sz}}(\eta_k+\ii\theta_k)\right|^2 \1_{\{s^{1+2\alpha}|(\log k)^{\alpha}k^{-1/2-sz}(\eta_k+\ii \theta_k)|>\varepsilon\}}\right)=0.
\end{equation}
Since $ (\log x)^\alpha x^{-1/2} \leq A_\alpha$ for some $A_\alpha>0$ and all $x\geq 2$, we conclude that, for integer $k\geq 2$, $s>0$ and $z\in H_0$,
$$
|(\log k)^\alpha k^{-1/2-sz}|=(\log k)^\alpha k^{-1/2-s\Re(z)}\leq A_\alpha.
$$
Hence, the expression under the limit in~\eqref{eq:lind-feller} is upper bounded by
$$
\left(s^{1+2\alpha}\sum_{k\geq 2}\frac{(\log k)^{2\alpha}}{k^{1+2s
\Re(z)}}\right)\me( \eta^2+\theta^2)\1_{\{\sqrt{\eta^2+\theta^2}>\varepsilon A_\alpha^{-1}s^{-1-2\alpha}\}}.
$$ As a consequence of $\me (\eta^2+\theta^2)<\infty$, the expectation converges to $0$, as $s\to 0+$. According to Lemma~\ref{lem:zeta_derivatives}, the first factor converges to $\Gamma(1+2\alpha)/(2\Re(z))^{1+2\alpha}$. This completes the proof of~\eqref{eq:lind-feller}.

\vspace{2mm}
\noindent
{\sc Tightness.} Let $K$ be an arbitrary compact subset of $H_0$ and $T>0$. In order to prove that the family of distributions of the processes $(s^{1/2+\alpha}D(\alpha; 1/2+s(\cdot)))_{s\in (0,T]}$ is tight on $\mathcal{A}(H_0)$, it suffices to show that
\begin{equation}\label{eq:tightness}
\sup_{s\in (0,T]}\sup_{z\in K}\me |s^{1/2+\alpha}D(\alpha; 1/2+sz)|^2<\infty,
\end{equation}
see the remark after Lemma 2.6 in \cite{Shirai:2012}. Note that
\begin{align*}
\me |D(\alpha; 1/2+sz)|^2&=\me \left( D(\alpha; 1/2+sz)\overline{D(\alpha; 1/2+sz)}\right)\\
&=\E\left(\sum_{\ell\geq 2}\frac{(\log \ell)^{\alpha}}{\ell^{1/2+sz
}}(\eta_\ell+{\rm i}\theta_\ell)\right)\left(\sum_{j\geq 2}\frac{(\log j)^{\alpha}}{j^{1/2+s\overline{z}
}}(\eta_j-{\rm i}\theta_j)\right)\\
&=\E(\eta^2+\theta^2)\sum_{k\geq 2}\frac{(\log k)^{2\alpha}}{k^{1+2s\Re(z)}}
\end{align*}
and that
$$
\sup_{z\in K}\me |s^{1/2+\alpha}D(\alpha; 1/2+sz)|^2\leq \E(\eta^2+\theta^2)s^{1+2\alpha}\sum_{k\geq 2}\frac{(\log k)^{2\alpha}}{k^{1+2sx_0}},$$ where $x_0:=\inf_{z\in K}\Re(z)>0$. Since the right-hand side is bounded in $s
\in (0,T]$ by Lemma~\ref{lem:zeta_derivatives}, inequality \eqref{eq:tightness} follows. The proof of Theorem~\ref{thm:flt_main} is complete.

\begin{rem}
Recall that $\mathcal{D}_{-}^{\alpha}$ denotes the fractional derivative operator defined by ~\eqref{rem:frac_derivative}. In view of~\eqref{eq:frac_dreivative}
$$
s^{1/2+\alpha}D(\alpha; 1/2+s(\cdot))=\mathcal{D}_{-}^{\alpha}\left(s^{1/2}D(0; 1/2+s(\cdot))\right),\quad s>0.
$$
Thus, one possible way to deduce Theorem~\ref{thm:flt_main} could have been to prove it in a simpler situation $\alpha=0$, and then check that $\mathcal{D}_{-}^{\alpha}$ is a.s.~continuous at $\mathcal{I}_0$.
However, we have not been able to obtain appropriate continuity results for $\mathcal{D}_{-}^{\alpha}$, nor locate them in the literature. In view of this we proved Theorem~\ref{thm:flt_main} directly.
\end{rem}

\subsection{Proof of Proposition~\ref{prop:real_zeros}}

Let $\mathcal{A}_{(0,\infty)}(H_0)$ be a subspace of $\mathcal{A}(H_0)$ consisting of all functions $f\in\mathcal{A}(H_0)$ which take real values on $(0,\infty)$. The space $\mathcal{A}_{(0,\infty)}(H_0)$ is endowed with the induced topology. Note that, under the assumptions of Proposition~\ref{prop:real_zeros}, the weak convergence
\begin{equation}\label{eq:real_zeros_proof}
\left(s^{1/2+\alpha} D(\alpha; 1/2+sz)\right)_{z\in H_0}~\Longrightarrow~(\mathcal{I}(\alpha; z))_{z\in H_0},\quad s
\to 0+.
\end{equation}
holds on the space of probability measure on $\mathcal{A}_{(0,\infty)}(H_0)$, since $\mathcal{A}_{(0,\infty)}(H_0)$ is closed in $\mathcal{A}(H_0)$.

Let $A(0,\infty)$ be the set of all $f\in\mathcal{A}_{(0,\infty)}(H_0)$ which do not have multiple real zeros. According to Lemma 4.2 in \cite{Iksanov+Kabluchko+Marynych:2016}, the mapping $\mathcal{A}_{(0,\infty)}(H_0)\ni f\mapsto \Zeros_{(0,\infty)}(f)$ is continuous on $A(0,\infty)$. Thus,
\eqref{eq:real_zeros_pp_conv} follows from~\eqref{eq:real_zeros_proof} provided that
\begin{equation}\label{eq:no_multiple_zeros}
\mathbb{P}\{\mathcal{I}_\alpha \in A(0,\infty)\}=1.
\end{equation}
Recall that $\mathcal{I}_\alpha$ is a centered Gaussian process. Thus, in order to prove~\eqref{eq:no_multiple_zeros} it suffices to check that
\begin{equation}\label{eq:no_multiple_zeros2}
{\rm Var}\,(\mathcal{I}(\alpha;s))>0,\quad s>0,
\end{equation}
see Theorem in \cite{Ylvisaker:1968} or Lemma 4.3 in \cite{Iksanov+Kabluchko+Marynych:2016}. But~\eqref{eq:no_multiple_zeros2} is trivial, since
$$
{\rm Var}\,(\mathcal{I}(\alpha;s))=\frac{\Gamma(1+2\alpha)\sigma_1^2}{(2s
)^{1+2\alpha}}>0,\quad s>0
$$
by~\eqref{limit1}. The proof of Proposition~\ref{prop:real_zeros} is complete.

\section{Proof of Theorem \ref{main}}

We start by proving an intermediate result.
\begin{assertion}\label{lilhalf}
Under the assumptions of Theorem \ref{main},
\begin{equation}\label{lil1}
{\lim\sup}_{s\to 0+}\Big(\frac{s^{1+2\alpha}}{\log\log 1/s}\Big)^{1/2}\sum_{k\geq 2}\frac{(\log k)^\alpha}{k^{1/2+s}}\eta_k \leq \Big(\frac{\sigma_1^2 \Gamma(1+2\alpha)}{2^{2\alpha}}\Big)^{1/2} \quad\text{{\rm a.s.}}
\end{equation}
and
\begin{equation}\label{lil2}
{\lim\inf}_{s\to 0+ }\Big(\frac{s^{1+2\alpha}}{\log\log 1/s}\Big)^{1/2}\sum_{k\geq 2} \frac{(\log k)^\alpha}{k^{1/2+s}} \eta_k\geq -\Big(\frac{\sigma_1^2 \Gamma(1+2\alpha)}{2^{2\alpha}}\Big)^{1/2} \quad\text{{\rm a.s.}}
\end{equation}
\end{assertion}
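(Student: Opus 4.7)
The strategy is the standard subsequence-plus-interpolation scheme for the law of the iterated logarithm, adapted to the continuous parameter $s\to 0+$ and to the fact that $\eta$ is only assumed to have a finite second moment. Set
$$Y(s):=\sum_{k\geq 2}(\log k)^\alpha k^{-1/2-s}\eta_k,\qquad c(s):=\left(\frac{\sigma_1^2\Gamma(1+2\alpha)}{2^{2\alpha}}\right)^{1/2}\left(\frac{\log\log(1/s)}{s^{1+2\alpha}}\right)^{1/2}.$$
By Lemma~\ref{lem:zeta_derivatives} (used repeatedly in Section~4), ${\rm Var}\,Y(s)=\sigma_1^2\sum_{k\geq 2}(\log k)^{2\alpha}k^{-1-2s}\sim\sigma_1^2\Gamma(1+2\alpha)(2s)^{-(1+2\alpha)}$ as $s\to 0+$, so $c(s)^2\sim 2\,{\rm Var}(Y(s))\log\log(1/s)$ and \eqref{lil1} reduces to the Hartman--Wintner form $\limsup_{s\to 0+} Y(s)/c(s)\leq 1$ a.s.

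Fix $\gamma\in(0,1)$ and $\delta>0$ and put $s_n:=\gamma^n$. The plan is to establish along this geometric subsequence the summable estimate $\mmp(Y(s_n)>(1+\delta)c(s_n))\leq Cn^{-(1+\delta/2)^2}$, from which $\limsup_n Y(s_n)/c(s_n)\leq 1+\delta$ a.s.~by Borel--Cantelli. Since $\eta$ need not have exponential moments, I would first truncate via $\eta_k':=\eta_k\mathbf{1}_{\{|\eta_k|\leq k\}}$: the bound $\sum_{k}\mmp(|\eta_k|>k)\leq\me\eta^2\sum_k k^{-2}<\infty$ combined with the Borel--Cantelli lemma implies $\eta_k'=\eta_k$ for all but finitely many $k$ a.s., and using $\me\eta=0$ together with $|\me\eta_k'|=|\me\eta\mathbf{1}_{\{|\eta|>k\}}|\leq\me[\eta^2\mathbf{1}_{\{|\eta|>k\}}]/k$, the deterministic centering $m(s):=\sum_k(\log k)^\alpha k^{-1/2-s}\me\eta_k'$ is uniformly bounded in $s\in(0,1]$ by the convergent sum $\me\eta^2\sum_k(\log k)^\alpha k^{-3/2}$. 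Bernstein's inequality applied to the centered truncated series, with a further cutoff at $k\leq K_n$ chosen so that individual summands remain below the threshold $\sigma(s_n)/\sqrt{\log\log(1/s_n)}$ needed to make the linear term in the Bernstein exponent negligible, then yields the sharp Gaussian-type bound $\exp\bigl(-(1+\delta/2)^2\log\log(1/s_n)\bigr)\asymp n^{-(1+\delta/2)^2}$; the contribution from $k>K_n$, whose variance is of the same order as the full $\sigma^2(s_n)$, is handled by a separate truncation-plus-Bernstein step.

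To pass from the subsequence to all $s>0$, I would use the identity $Y(s)-Y(s_n)=\int_{s_n}^{s} Y'(u)\,{\rm d}u$, where $Y'(u)=-\sum_{k\geq 2}(\log k)^{\alpha+1}k^{-1/2-u}\eta_k$ is itself a random Dirichlet series of the same type with $\alpha$ replaced by $\alpha+1$ (and variance $\sim s^{-(3+2\alpha)}$). A direct second-moment computation yields ${\rm Var}(Y(s)-Y(s_n))=O((1-\gamma)^2 s_n^{-(1+2\alpha)})$ uniformly on $[s_{n+1},s_n]$, and applying the truncation-plus-Bernstein argument of the previous paragraph to $Y'$ shows $\sup_{s\in[s_{n+1},s_n]}|Y(s)-Y(s_n)|/c(s_n)\leq \kappa(1-\gamma)$ a.s.~for large $n$ and some universal $\kappa>0$. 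Letting first $\delta\to 0+$ and then $\gamma\to 1-$ gives \eqref{lil1}; inequality \eqref{lil2} then follows from \eqref{lil1} applied to the sequence $(-\eta_k)_{k\in\mathbb{N}}$, which satisfies the same hypotheses with the same limit constant.

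The principal technical difficulty lies in the truncation of the second paragraph. In the Rademacher setting of \cite{Aymone+Frometa+Misturini:2020} the summands are uniformly bounded and Hoeffding's inequality applies directly with the sharp constant. Under the mere assumption $\me\eta^2<\infty$, one must simultaneously (i) keep the deterministic centering $m(s)$ of size $o(c(s))$ uniformly in $s$, (ii) retain the sharp Bernstein exponent $(1+\delta/2)^2\log\log(1/s_n)$, and (iii) control the truncated tail $k>K_n$ whose variance is of the same order as $\sigma^2(s_n)$ itself; balancing these three competing requirements, in particular choosing $K_n$ and the truncation level as appropriate functions of $s_n$, is the main technical hurdle.
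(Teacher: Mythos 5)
Your overall scheme (subsequence + exponential inequality + interpolation, with truncation to compensate for the mere $L^2$ assumption) matches the paper's blueprint, but there are two genuine gaps in the truncation and decomposition steps, and they are precisely at the point you flag as ``the main technical hurdle.''

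First, the truncation $\eta_k'=\eta_k\1_{\{|\eta_k|\le k\}}$ together with a cutoff $k\le K_n$ does not close. You correctly observe that individual summands must be below $\sigma(s_n)/\sqrt{\log\log(1/s_n)}$ for Bernstein to give the sharp exponent, and that the tail $k>K_n$ still carries a non-negligible fraction of the variance. But if both pieces carry a positive fraction of $\sigma^2(s_n)$, then bounding $\P\{Y(s_n)>(1+\delta)c(s_n)\}$ by a union bound over the head and tail forces you to allot $(1+\delta)c(s_n)=a\,c(s_n)+b\,c(s_n)$ between them, and the resulting exponent $\min(a^2/\sigma_{\rm head}^2,\ b^2/\sigma_{\rm tail}^2)$ is strictly less than $(1+\delta)^2/\sigma^2(s_n)$ unless the allocation is exactly tuned to the variance split---which degenerates precisely as you optimize. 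The paper avoids this by never splitting the tail into ``small-$k$'' and ``large-$k$'' pieces. Instead it uses the $k$-scaled, $s$-dependent truncation event $\mathcal{A}_{k,\rho}(s)$ whose threshold grows like $\rho\sqrt{k}/\bigl((\log k)^\alpha(\log 1/s)(s^{1+2\alpha}\log\log 1/s)^{1/2}\bigr)$; after normalization by $f_\alpha(s)$ this makes \emph{every} summand with $k>M(s)$ uniformly of size $O(\rho/\log\log(1/s))$, so a single MGF bound on the whole truncated tail delivers the sharp exponent with a $\rho$-dependent loss that vanishes as $\rho\to 0+$ (Lemma~\ref{4}). The contribution of the large-$|\eta_k|$ events is disposed of separately but crudely in Lemma~\ref{3} (it is $o(1)$ a.s., no sharp constant needed). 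Crucially, the head $k\le M(s)=\lfloor 1/s\rfloor$ is \emph{not} treated by an exponential inequality at all: Lemma~\ref{1} shows by summation by parts and the classical LIL for the random walk $T_n=\eta_1+\cdots+\eta_n$ that $f_\alpha(s)\sum_{k\le M(s)}(\log k)^\alpha k^{-1/2-s}\eta_k\to 0$ a.s. This is the split that dissolves your three competing requirements.

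Second, your geometric subsequence $s_n=\gamma^n$ forces the interpolation to produce an error of order $(1-\gamma)$ which must then be sent to zero; the paper chooses $s_n=\exp(-n^{1-\gamma})$, for which $s_n/s_{n+1}\to 1$ and $\log\log(1/s_n)=(1-\gamma)\log n$, so the Borel--Cantelli exponent is tuned by the constraint~\eqref{eq:rho_choice} and the interpolation error is genuinely $o(1)$ without further limits. Your sketch of the interpolation ($Y(s)-Y(s_n)=\int_{s_n}^s Y'(u)\,{\rm d}u$ and then ``apply Bernstein to $Y'$'') glosses over the need for a \emph{maximal} bound on $\sup_{u\in[s_{n+1},s_n]}|Y'(u)|$; the paper's Lemma~\ref{5} does this with a dyadic chaining argument on $[s_{n+1},s_n]$, the mean value theorem on each dyadic cell, and the same MGF machinery. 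Finally, note that the interpolation argument must also re-decompose $Y(s)-Y(s_n)$ into the Lemma~\ref{1} (head) and Lemma~\ref{3}/\ref{4} (tail truncation) pieces to handle the $s$-dependent truncation indicators, which is what the $I_{n,1}$, $I_{n,2}$, $I_{n,3}$ split in Lemma~\ref{5} does; a clean ``just differentiate'' argument is not available because $\tilde\eta_{k,\rho}(s)$ itself depends on $s$.
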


Replacing $\eta_k$ with $\eta_k/\sigma_1$ we can and do assume that $\sigma_1^2=1$. For $s>0$, put $M(s):=\lfloor 1/s\rfloor$ and, for $s\in (0,1/\eee)$, put $$f_\alpha(s):=\Big(\frac{s^{1+2\alpha}}{c_\alpha \log\log 1/s}\Big)^{1/2},$$ where $$c_\alpha:=\frac{\Gamma(1+2\alpha)}{2^{2\alpha}}.$$ We prove Proposition \ref{lilhalf} via a sequence of lemmas.
\begin{lemma}\label{1}
$\lim_{s\to 0+} f_\alpha(s)\sum_{k=2}^{M(s)}\frac{(\log k)^\alpha}{k^{1/2+s}}\eta_k=0$ {\rm a.s.}
\end{lemma}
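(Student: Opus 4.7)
The key observation is that Abel summation converts the sum on the left-hand side into a running maximum of a canonical partial-sum sequence, to which Kolmogorov's maximal inequality applies. Write $W(s):=\sum_{k=2}^{M(s)}(\log k)^\alpha \eta_k / k^{1/2+s}$, $a_k:=(\log k)^\alpha \eta_k/k^{1/2}$ and $G_m:=\sum_{k=2}^m a_k$ with $G_1:=0$. Since the sequence $(k^{-s})_{k\geq 2}$ is positive and strictly decreasing in $k$ for every $s>0$, summation by parts gives
\begin{equation*}
W(s) = M(s)^{-s}G_{M(s)} + \sum_{k=2}^{M(s)-1}\bigl(k^{-s}-(k+1)^{-s}\bigr)G_k,
\end{equation*}
and the nonnegative coefficients on the right-hand side telescope to $2^{-s}$, so
\begin{equation*}
|W(s)| \leq 2^{-s}\max_{2\le m\le M(s)}|G_m| \leq \max_{2\le m\le M(s)}|G_m|.
\end{equation*}
All of the $s$-dependence of the weights has been absorbed into the cutoff $M(s)$.

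Since $1+2\alpha>0$, I would fix an integer $\kappa$ with $\kappa(1+2\alpha)>1$ and set $s_n:=n^{-\kappa}$, so that $M(s_n)=n^\kappa$. A short log-differentiation shows that the map $s\mapsto s^{1+2\alpha}/\log\log(1/s)$ is increasing on $(0,1/\eee)$, whence $f_\alpha$ is increasing there as well. Combined with the obvious bound $M(s)\leq M(s_{n+1})\leq(n+1)^\kappa$ for $s\in I_n:=(s_{n+1},s_n]$, the previous display yields
\begin{equation*}
\sup_{s\in I_n}f_\alpha(s)|W(s)| \leq f_\alpha(s_n)\max_{2\le m\le(n+1)^\kappa}|G_m|.
\end{equation*}
The variables $a_k$ are independent and centered with ${\rm Var}(a_k)=(\log k)^{2\alpha}/k$, so Kolmogorov's maximal inequality combined with the elementary asymptotic $\sum_{k=2}^N(\log k)^{2\alpha}/k\sim(\log N)^{1+2\alpha}/(1+2\alpha)$ (valid because $\alpha>-1/2$) gives, for every $\varepsilon>0$,
\begin{equation*}
\mmp\Big(f_\alpha(s_n)\max_{m\leq(n+1)^\kappa}|G_m|>\varepsilon\Big) = O\bigg(\frac{(\log n)^{1+2\alpha}}{n^{\kappa(1+2\alpha)}\log\log n}\bigg),
\end{equation*}
which is summable in $n$ since $\kappa(1+2\alpha)>1$. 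The Borel--Cantelli lemma together with the preceding supremum bound then yields $\sup_{s\in I_n}f_\alpha(s)|W(s)|\to 0$ almost surely as $n\to\infty$, which implies the lemma.

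The main obstacle is noticing the Abel summation bound: without it, one is forced to control $\sup_{s\in I_n}|W(s)|$ along sample paths in the presence of the simultaneous $s$-dependence of both the upper limit $M(s)$ and the weights $k^{-s}$, a substantially more delicate task. Once the Abel bound is in place, the remainder is a routine Kolmogorov-plus-Borel--Cantelli argument along a polynomially thinned subsequence, with the monotonicity of $f_\alpha$ handling the oscillation inside each interval $I_n$.
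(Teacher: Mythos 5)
Your proof is correct and takes a genuinely different route from the paper. The paper's argument relies on the Hartman--Wintner law of the iterated logarithm applied to the \emph{unweighted} partial sums $T_n=\eta_1+\dots+\eta_n$, then performs integration by parts against $T_{\lfloor x\rfloor}$ and bounds the resulting boundary and integral terms one at a time using the pathwise estimate $|T_n|=O((n\log\log n)^{1/2})$ a.s. You instead absorb the full deterministic weight $(\log k)^\alpha k^{-1/2}$ into the partial-sum sequence $G_m$, so that the only $s$-dependence of the weights left over is the bounded-variation sequence $(k^{-s})_k$; Abel summation then collapses the sum to a running maximum $\max_{m\le M(s)}|G_m|$, to which Kolmogorov's maximal inequality and a Borel--Cantelli argument along the polynomial mesh $s_n=n^{-\kappa}$ apply directly. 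Your route is more elementary (it needs only second-moment estimates, not the LIL) and arguably cleaner: all the $s$-dependence inside each interval $I_n$ is handled at once by the monotonicity of $M$ and of $f_\alpha$ rather than by delicate term-by-term estimates. Two small remarks: (i) the monotonicity of $f_\alpha$ on $(0,1/\eee)$ that you use is also used in the paper (in the proof of its Lemma~5) and your sketch of the log-differentiation is correct; (ii) you implicitly take $\sigma_1^2=1$, which is the paper's standing normalization in this section — strictly the variance in Kolmogorov's inequality is $\sigma_1^2\sum_{k\le N}(\log k)^{2\alpha}/k$, but the extra constant does not affect summability.
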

\begin{proof}
Put $T_0:=0$, $T_n:=\eta_1+\ldots+\eta_n$ for $n\in\mn$. By the law of the iterated logarithm for standard random walks,
\begin{equation}\label{eq:2}
|T_n|\leq \sup_{k\leq n}|T_k|=O((n \log\log n)^{1/2}),\quad n\to\infty\quad \text{a.s.}
\end{equation}
Hence, there exist $C>0$ and a.s.\ finite $K\geq 3$ such that $|T_n|\leq C(n \log\log n)^{1/2}$ whenever $n\geq K$. Integration by parts yields, for small $s>0$,
\begin{multline*}
\sum_{k=2}^{M(s)}\frac{(\log k)^\alpha}{k^{1/2+s}}\eta_k= \int_{(3/2,\, M(s)]}\frac{(\log x)^\alpha}{x^{1/2+s}}{\rm d}T_{\lfloor x\rfloor}= \frac{(\log M(s))^\alpha T_{M(s)}}{(M(s))^{1/2+s}}-\frac{(\log 3/2)^\alpha \eta_1}{(3/2)^{1/2+s}}\\- \int_{3/2}^K\frac{(\alpha (\log x)^{\alpha-1}-(1/2+s)(\log x)^\alpha)T_{\lfloor x\rfloor}}{x^{3/2+s}}{\rm d}x\\-\int_K^{M(s)}\frac{(\alpha (\log x)^{\alpha-1}-(1/2+s)(\log x)^\alpha)T_{\lfloor x\rfloor}}{x^{3/2+s}}{\rm d}x.
\end{multline*}
Since $\lim_{s\to 0+} (M(s))^s=1$, we infer, as $s\to 0+$,
\begin{multline*}
\frac{(\log M(s))^\alpha |T_{M(s)}|}{(M(s))^{1/2+s}}\sim \frac{(\log M(s))^\alpha |T_{M(s)}|}{(M(s))^{1/2}}=O((\log M(s))^\alpha (\log\log M(s))^{1/2})\\=O((\log 1/s)^\alpha(\log\log 1/s)^{1/2})\quad \text{a.s.}
\end{multline*}
Hence, as $s\to 0+$, $f_\alpha(s)\frac{(\log M(s))^\alpha |T_{M(s)}|}{(M(s))^{1/2+s}}=O(s^{1/2+\alpha}(\log 1/s)^{\alpha})\to 0$ a.s. Further,
\begin{multline*}
\lim_{s\to 0+}\int_{3/2}^K \frac{(\alpha (\log x)^{\alpha-1}-(1/2+s)(\log x)^\alpha) T_{\lfloor x\rfloor}}{x^{3/2+s}}{\rm d}x\\
=\int_{3/2}^K \frac{(\alpha (\log x)^{\alpha-1}-(1/2)(\log x)^\alpha)T_{\lfloor x\rfloor}}{x^{3/2}}{\rm d}x\quad\text{a.s.},
\end{multline*}
and the limit random variable is a.s.\ finite. Finally, by the change of variable $y=s\log x$,
\begin{multline*}
\int_K^{M(s)}\frac{|T_{\lfloor x\rfloor}
|}{x^{3/2+s}}{\rm d}x\leq C\int_K^{M(s)}\frac{(\log\log x)^{1/2}}{x^{1+s}}{\rm d}x=\frac{C}{s}\int_{s\log K}^{s\log M(s)}\eee^{-y}(\log 1/s-\log 1/y)^{1/2}{\rm d}y\\\leq \frac{C(\log 1/s)^{1/2}}{s}\int_0^{s\log M(s)}\eee^{-y}{\rm d}y~\sim~ C (\log 1/s)^{1/2}\log M(s)\sim C(\log 1/s)^{3/2},\quad s\to 0+\quad \text{a.s.},
\end{multline*}
whence, by monotonicity of $x\mapsto (\log x)^\alpha$, $$\int_K^{M(s)}\frac{(\alpha (\log x)^{\alpha-1}+(1/2+s)(\log x)^\alpha)|T_{\lfloor x\rfloor}|}{x^{3/2+s}}{\rm d}x=O((\log 1/s)^{\alpha\vee 0+3/2}),\quad s\to 0+\quad \text{a.s.}$$
Thus, by the triangle inequality, $$\lim_{s\to 0+} f_\alpha(s) \Big|\int_K^{M(s)}\frac{(\alpha (\log x)^{\alpha-1}-(1/2+s)(\log x)^\alpha)T_{\lfloor x\rfloor}}{x^{3/2+s}}{\rm d}x\Big|=0\quad \text{a.s.}$$ The proof of Lemma \ref{1} is complete.
\end{proof}

For $k\in\mn$, $\rho>0$ and $s\in (0,1/\eee)$, define the event $$\mathcal{A}_{k,\,\rho}(s):=\Big\{|\eta_k|>\frac{\rho}{(\log k)^\alpha (\log 1/s)} \Big(\frac{k^{1+s}}{s^{1+2\alpha} \log\log 1/s}\Big)^{1/2}\Big\}.$$
\begin{lemma}\label{3}
For all $\rho>0$,
\begin{equation}\label{aux010}
\lim_{s\to 0+} f_\alpha(s)\sum_{k\geq M(s)+1}\frac{(\log k)^\alpha}{k^{1/2+s}}|\eta_k|\1_{\mathcal{A}_{k,\,\rho}(s)}
=0 \quad\text{{\rm a.s.}}
\end{equation}
and
\begin{equation}\label{aux011}
\lim_{s\to 0+}f_\alpha(s)\sum_{k\geq M(s)+1}\frac{(\log k)^\alpha}{k^{1/2+s}} \me (|\eta_k|\1_{\mathcal{A}_{k,\,\rho}(s)})=0.
\end{equation}
\end{lemma}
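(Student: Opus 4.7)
Write the threshold appearing in $\mathcal{A}_{k,\rho}(s)$ in the form $T_k(s)=R(s)k^{1/2}\gamma_k(s)$, with $\gamma_k(s):=k^{s/2}/(\log k)^\alpha$ and $R(s):=\rho/((\log(1/s))(s^{1+2\alpha}\log\log(1/s))^{1/2})$. A direct computation gives the identity $f_\alpha(s)/R(s)=C_0\cdot s^{1+2\alpha}\log(1/s)$ where $C_0=c_\alpha^{-1/2}/\rho$, and the assumption $\alpha>-1/2$ forces $R(s)\to\infty$ polynomially in $1/s$. For \eqref{aux010}, the starting ingredient is the Borel--Cantelli consequence of $\me\eta^2<\infty$, namely $|\eta_k|/\sqrt{k}\to 0$ almost surely. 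Since $\mathcal{A}_{k,\rho}(s)$ is equivalent to $|\eta_k|/\sqrt{k}>R(s)\gamma_k(s)$, it suffices to show $G(s):=\inf_{k\ge M(s)+1}R(s)\gamma_k(s)\to\infty$. For $\alpha\le 0$, $\gamma_k(s)$ is monotone increasing in $k$ on $[M(s)+1,\infty)$, hence $G(s)\ge R(s)(\log(M(s)+1))^{-\alpha}\to\infty$; for $\alpha>0$, a short calculus exercise shows that $\gamma_k(s)$ attains its global minimum at $k=e^{2\alpha/s}$ with value $(s/(2\alpha))^\alpha e^\alpha$, giving $G(s)\sim R(s)s^\alpha\sim s^{-1/2}/\text{polylog}\to\infty$. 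On the full-measure event $\{|\eta_k(\omega)|/\sqrt{k}<1$ for all $k\ge N(\omega)\}$, as soon as $s$ is small enough that $M(s)\ge N(\omega)$ and $G(s)>1$, every single event $\mathcal{A}_{k,\rho}(s)$ with $k\ge M(s)+1$ fails, so the sum in \eqref{aux010} is identically $0$ and the claim follows trivially.

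For \eqref{aux011}, set $\psi(T):=\me(|\eta|\1_{\{|\eta|>T\}})$ and apply Fubini to rewrite the expected sum as $\me(|\eta|\,S(|\eta|,s)\,\1_{\{|\eta|>T^*(s)\}})$, where $T^*(s):=T_{M(s)+1}(s)$, $S(t,s):=\sum_{k=M(s)+1}^{\tau(t,s)}(\log k)^\alpha/k^{1/2+s}$, and $\tau(t,s)$ is the largest $k$ with $T_k(s)\le t$ (well-defined because $T_k(s)$ is increasing in $k$ on $[M(s)+1,\infty)$ for $s$ small). A Laplace-type estimate of $\int_{M(s)}^{\tau(t,s)}(\log x)^\alpha x^{-1/2-s}\,\mathrm{d}x$ yields $S(t,s)\le C(\log\tau)^\alpha\tau^{1/2-s}$ for every $\alpha>-1/2$, and the defining relation $R(s)\tau^{(1+s)/2}(\log\tau)^{-\alpha}=t$ converts this into $S(t,s)\le Ct(\log\tau)^{2\alpha}/(R(s)\tau^{3s/2})$. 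A direct calculus exercise then gives
\begin{equation*}
\sup_{\tau\ge M(s)+1}\frac{(\log\tau)^{2\alpha}}{\tau^{3s/2}}\le C_\alpha\cdot\begin{cases}s^{-2\alpha},&\alpha>0,\\ 1,&\alpha=0,\\ (\log(1/s))^{2\alpha},&\alpha<0,\end{cases}
\end{equation*}
the supremum being attained at the interior point $\tau=e^{4\alpha/(3s)}$ for $\alpha>0$ and at the boundary $\tau=M(s)+1$ otherwise. Substituting this bound, using the trivial estimate $\me(\eta^2\1_{\{|\eta|>T^*(s)\}})\le\me\eta^2<\infty$, and recalling $f_\alpha(s)/R(s)=C_0s^{1+2\alpha}\log(1/s)$, one obtains $f_\alpha(s)\cdot\me[\text{sum}]\le C\me\eta^2\cdot s\log(1/s)$ for $\alpha\ge 0$ and $\le C\me\eta^2\cdot(s\log(1/s))^{1+2\alpha}$ for $\alpha<0$; both expressions tend to zero as $s\to 0+$ since $s\log(1/s)\to 0$ and $1+2\alpha>0$.

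The main technical obstacle is the sharp control of the auxiliary factor $(\log\tau)^{2\alpha}/\tau^{3s/2}$. Substituting the naive pointwise bound $\psi(T)\le\me\eta^2/T$ termwise into the sum in \eqref{aux011} only yields $f_\alpha(s)\cdot\me[\text{sum}]=O(\log(1/s))$, which is unbounded. It is precisely the Fubini inversion combined with the exploitation of the trade-off between the growing $(\log\tau)^{2\alpha}$ and the decaying $\tau^{-3s/2}$ that extracts the extra $s^{-2\alpha}$ factor (or the even more favourable $(\log(1/s))^{2\alpha}$ for $\alpha<0$); this factor cancels the $s^{1+2\alpha}\log(1/s)$ coming from $f_\alpha/R$ and leaves the vanishing remainder $s\log(1/s)\to 0$, which closes the proof.
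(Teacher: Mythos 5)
Your proof is correct and rests on the same two pillars as the paper's: for \eqref{aux010}, the almost-sure bound $\sup_k |\eta_k| k^{-1/2}<\infty$ (a consequence of $\me\eta^2<\infty$) together with the divergence of the threshold in $\mathcal{A}_{k,\rho}(s)$; for \eqref{aux011}, a Fubini/Tonelli interchange that reduces the estimate to a tail of $\me\eta^2$. The differences are technical rather than structural. In \eqref{aux010} you minimise $\gamma_k(s)=k^{s/2}(\log k)^{-\alpha}$ directly, while the paper instead uses the uniform bound $(\log k)^\alpha k^{-s}\le \lambda_\alpha s^{-(\alpha\vee 0)}$ (with $\lambda_\alpha=1$ for $\alpha<0$) to strip off the logarithmic weight up front --- both are the same calculus fact read in two directions. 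In \eqref{aux011} the paper again strips the weight first, so that after Fubini the inner sum is the elementary $\sum_{k\le N}k^{-1/2}\le 2\sqrt N$, whereas you carry $(\log k)^\alpha k^{-1/2-s}$ through the interchange, bound the truncated sum $S(t,s)\le C(\log\tau)^\alpha\tau^{1/2-s}$, substitute the threshold relation, and optimise $(\log\tau)^{2\alpha}\tau^{-3s/2}$. This is more laborious but it works, and your final bounds $s\log(1/s)$ (for $\alpha\ge 0$) and $(s\log(1/s))^{1+2\alpha}$ (for $\alpha<0$) are of the same quality as the paper's $s\log(1/s)$ and $s^{1+\alpha}\log(1/s)$. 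Two spots deserve more care. First, the asserted bound $S(t,s)\le C(\log\tau)^\alpha\tau^{1/2-s}$ is immediate for $\alpha\ge 0$ but requires an argument for $\alpha\in(-1/2,0)$, where $(\log k)^\alpha$ is decreasing: one can split the range $[M(s)+1,\tau]$ at $\sqrt{\tau}$, or integrate by parts, to absorb the mismatch between $(\log M(s))^\alpha$ and $(\log\tau)^\alpha$; writing this out would tighten the proof. Second, the ``defining relation'' should be the one-sided inequality $R(s)\tau^{(1+s)/2}(\log\tau)^{-\alpha}\le t$ rather than an equality; fortunately the resulting inequality runs in the favourable direction, so the conclusion is unaffected. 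Overall the paper's preliminary reduction $(\log k)^\alpha k^{-s}\le\lambda_\alpha s^{-(\alpha\vee 0)}$ makes the Fubini step essentially trivial, and is the cleaner of the two executions.
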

\begin{proof}
For fixed $\alpha\geq 0$, the function $x\mapsto x^{-s}(\log x)^\alpha$ attains its overall maximum on $[1,\infty)$ at $\eee^{\alpha/s}$ and
\begin{equation}\label{eq:star}
\frac{(\log x)^\alpha}{x^s} \leq \frac{\alpha^\alpha}{\eee^\alpha s^\alpha}=:\frac{\lambda_\alpha}{s^\alpha},\quad x\geq 1.
\end{equation}
Here, $0^0$ is interpreted as $1$. For $\alpha\in (-1/2, 0)$, we put $\lambda_\alpha:=1$ and note that $$\frac{(\log k)^\alpha}{k^s} \leq 1,\quad k\geq 3.$$ Using this we obtain
\begin{multline*}
\sum_{k\geq M(s)+1}\frac{(\log k)^\alpha}{k^{1/2+s}}|\eta_k|\1_{\mathcal{A}_{k,\,\rho}(s)}\\\leq \lambda_\alpha s^{-\alpha\vee 0} \sum_{k\geq M(s)+1}\frac{|\eta_k|}{k^{1/2}}\1_{\{|\eta_k|k^{-1/2}>\rho \kappa_\alpha^{-1}(s\log\log 1/s)^{-1/2}(\log 1/s)^{-1}\}}=0\quad\text{a.s.}
\end{multline*}
for small enough positive $s$, where $\kappa_\alpha:=2^{\alpha\vee 0} \lambda_\alpha$. The right-hand side vanishes because
$$
\lim_{k\to\infty}k^{-1/2}|\eta_k|=0\quad \text{a.s.}
$$
as a consequence of $\me \eta^2<\infty$ and thereupon $\sup_{k\geq 1}\,|\eta_k|k^{-1/2}<\infty$ a.s. Since the function $s\mapsto (s\log\log 1/s)^{-1/2}(\log 1/s)^{-1}$ is monotone for small $s$ and divergent as $s\to 0+$, the claim is justified by $$\1_{\{|\eta_k|k^{-1/2}>\rho \kappa_\alpha^{-1}(s\log\log 1/s)^{-1/2}(\log 1/s)^{-1}\}}\leq \1_{\{\sup_{k\geq 1}\,|\eta_k|k^{-1/2}>\rho \kappa_\alpha^{-1}(s\log\log 1/s)^{-1/2}(\log 1/s)^{-1}\}}=0\quad \text{a.s.}$$ for small $s$. This completes the proof of \eqref{aux010}.

Relation \eqref{aux011} follows from
\begin{multline*}
\sum_{k\geq M(s)+1}\frac{(\log k)^\alpha}{k^{1/2+s}}\me (|\eta_k|\1_{\mathcal{A}_{k,\,\rho}(s)})\\\leq \lambda_\alpha s^{-\alpha\vee 0}
\sum_{k\geq M(s)+1}k^{-1/2} \me \big(|\eta| \1_{\{ \rho^{-1}\kappa_\alpha (s\log\log 1/s)^{1/2}(\log 1/s)|\eta|>k^{1/2}\}} \big)\\\leq \lambda_\alpha s^{-\alpha\vee 0}\me \left(|\eta| \sum_{k=1}^{\lfloor \rho^{-2}\kappa_\alpha^2 (s\log\log 1/s) (\log 1/s)^2 \eta^2\rfloor} k^{-1/2}\right) \\\leq 2 \rho^{-1} \lambda_\alpha \kappa_\alpha \me \eta^2 s^{-\alpha\vee 0}(s\log\log 1/s)^{1/2} \log 1/s.
\end{multline*}
The proof of Lemma \ref{3} is complete.
\end{proof}

As usual, $\mathcal{A}^c_{k,\,\rho}(s)$ will denote the complement of $\mathcal{A}_{k,\,\rho}(s)$, that is, for $k\in\mn$, $\rho>0$ and $s\in (0,1/\eee)$,
$$\mathcal{A}^c_{k,\,\rho}(s)=\Big\{|\eta_k|\leq \frac{\rho}{(\log k)^\alpha (\log 1/s)} \Big(\frac{k^{1+s}}{s^{1+2\alpha} \log\log 1/s}\Big)^{1/2}\Big\}.$$

\begin{lemma}\label{4}
Fix any $\gamma\in (0, (\sqrt{5}-1)/2)$, pick any $\rho=\rho(\gamma)$ satisfying
\begin{equation}\label{eq:rho_choice}
(1-\gamma)(1+\gamma)^2(2-\exp(4(1+\gamma)\rho c_\alpha^{-1/2}))>1
\end{equation}
and put $s_n:=\exp(-n^{1-\gamma})$ for $n\in\mn$. Then
$$\limsup_{n\to\infty}f_\alpha(s_n)\sum_{k\geq M(s_n)+1}\frac{(\log k)^\alpha\tilde \eta_{k,\rho}(s_n)}{k^{1/2+s_n}}\leq 1+\gamma\quad \text{{\rm a.s.}},$$
where $\tilde \eta_{k,\rho}(s):=\eta_k\1_{\mathcal{A}^c_{k,\,\rho}(s)}-\me (\eta_k\1_{\mathcal{A}^c_{k,\,\rho}(s)})$ for $k\in\mn$ and $s\in (0,1/\eee)$.
\end{lemma}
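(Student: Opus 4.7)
The plan is to combine an exponential Chernoff-type estimate with the first Borel--Cantelli lemma applied along the subsequence $(s_n)$. Write $a_k(s):=(\log k)^\alpha/k^{1/2+s}$ and let $M_k$ denote the upper bound on $|\eta_k|$ that defines the event $\mathcal{A}^c_{k,\,\rho}(s_n)$, so that $\tilde{\eta}_{k,\rho}(s_n)$ is centered, satisfies $|\tilde{\eta}_{k,\rho}(s_n)|\leq 2M_k$, and has variance at most $1$. Abbreviating $Y_n:=f_\alpha(s_n)\sum_{k>M(s_n)}a_k(s_n)\tilde{\eta}_{k,\rho}(s_n)$, I would aim to show $\sum_{n\geq 1}\mmp\{Y_n>1+\gamma\}<\infty$, after which Borel--Cantelli forces $\{Y_n>1+\gamma\}$ to occur only finitely often almost surely.

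The first step is to extract the variance asymptotic $V_n:=\sum_{k>M(s_n)}a_k(s_n)^2\mathrm{Var}(\tilde{\eta}_{k,\rho}(s_n))\leq c_\alpha/(2s_n^{1+2\alpha})(1+o(1))$ from Lemma~\ref{lem:zeta_derivatives}, which delivers $f_\alpha(s_n)^2V_n\sim 1/(2\log\log(1/s_n))$. The second step is the MGF estimate $\E e^{\mu X}\leq \exp(\mu^2\E X^2\, e^{\mu B}/2)$, valid for centered $X$ with $|X|\leq B$, which, by independence, gives for every $\lambda>0$
$$
\E\exp(\lambda Y_n)\leq \exp\Bigl(\tfrac{\lambda^2 f_\alpha(s_n)^2 V_n}{2}\exp(2\lambda f_\alpha(s_n)D_n)\Bigr),
$$
where $D_n:=\sup_{k>M(s_n)}a_k(s_n)M_k\sim \rho/((\log 1/s_n)(s_n^{1+2\alpha}\log\log 1/s_n)^{1/2})$. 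Plugging this into Markov's exponential inequality with the choice $\lambda:=(1+\gamma)/(f_\alpha(s_n)^2V_n)$ and using the algebraic identity $-A+Ae^{C}/2=-A(2-e^{C})/2$ produces
$$
\mmp\{Y_n>1+\gamma\}\leq \exp\Bigl(-\tfrac{(1+\gamma)^2}{2f_\alpha(s_n)^2V_n}\bigl(2-\exp(2\lambda f_\alpha(s_n)D_n)\bigr)\Bigr).
$$
A short computation yields $2\lambda f_\alpha(s_n)D_n\sim 4(1+\gamma)\rho c_\alpha^{-1/2}/\log(1/s_n)$, and since $\log(1/s_n)=n^{1-\gamma}\geq 1$ along the chosen subsequence, the quantity $2\lambda f_\alpha(s_n)D_n$ is uniformly bounded by $4(1+\gamma)\rho c_\alpha^{-1/2}$. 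Substituting $1/(2f_\alpha(s_n)^2V_n)\sim \log\log(1/s_n)=(1-\gamma)\log n$ then gives
$$
\mmp\{Y_n>1+\gamma\}\leq n^{-(1-\gamma)(1+\gamma)^2(2-\exp(4(1+\gamma)\rho c_\alpha^{-1/2}))(1+o(1))},
$$
whose summability is precisely condition~\eqref{eq:rho_choice}.

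The delicate part is the simultaneous calibration of three logarithmic scales: the variance asymptotic $f_\alpha(s_n)^2V_n\sim 1/(2\log\log(1/s_n))$ brings in the prefactor $(1+\gamma)^2$, the subsequence spacing $\log\log(1/s_n)=(1-\gamma)\log n$ converts the $\log\log$-rate into the algebraic rate $n^{-(1-\gamma)(1+\gamma)^2}$, and the uniform control of the exponential correction $\exp(2\lambda f_\alpha(s_n)D_n)$ by its value at $n=1$ supplies the factor $2-\exp(4(1+\gamma)\rho c_\alpha^{-1/2})$. The restriction $\gamma<(\sqrt{5}-1)/2$ is dictated by $(1-\gamma)(1+\gamma)^2>1$, and $\rho$ must then be small enough to keep the correction factor large enough that the full product still strictly exceeds $1$. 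The main obstacle is to retain the sharp Gaussian constant $(1+\gamma)^2$ while absorbing the non-negligible truncation bounds $M_k$ into a uniform estimate on $D_n$, without which the $n^{-(1-\gamma)(1+\gamma)^2}$ rate would be spoiled.
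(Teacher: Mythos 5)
Your proposal is correct and follows essentially the same route as the paper: the same MGF estimate derived from $\eee^x\leq 1+x+(x^2/2)\eee^{|x|}$ and the truncation bound on $|\tilde\eta_{k,\rho}|$, the same tilting parameter (your $\lambda=(1+\gamma)/(f_\alpha^2 V_n)$ is asymptotically the paper's $u=2(1+\gamma)\log\log(1/s_n)$), and the same Borel--Cantelli conclusion using $\log\log(1/s_n)=(1-\gamma)\log n$. The only cosmetic difference is that the paper drops the $\log(1/s)$ factor from the truncation level at the outset (using $\log(1/s)\geq 1$) and bounds $\sum_{k>M(s)}(\log k)^{2\alpha}k^{-1-2s}\leq c_\alpha/(2s^{1+2\alpha})$ by an exact integral comparison, so its final estimate $\mmp\{X_\alpha(s_n)>1+\gamma\}\leq n^{-(1-\gamma)(1+\gamma)^2(2-\exp(4(1+\gamma)\rho c_\alpha^{-1/2}))}$ is clean with no $(1+o(1))$ factor, whereas your version carries asymptotic $\sim$'s that need a routine upgrade to two-sided bounds on $V_n$ for large $n$; since \eqref{eq:rho_choice} is a strict inequality this is harmless.
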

\begin{proof}
We start by explaining that the stated choice of $\rho$ is indeed possible. Observe that $(1-\gamma)(1+\gamma)^2>1$ whenever $\gamma\in (0,(\sqrt{5}-1)/2)$. Choosing positive $\rho$ sufficiently close to $0$, we can make $2-\exp(4(1+\gamma)\rho c_\alpha^{-1/2})$ as close to $1$ as we wish and particularly ensure that $2-\exp(4(1+\gamma)\rho c_\alpha^{-1/2})>(1-\gamma)^{-1}(1+\gamma)^{-2}$.

Put $$X_\alpha(s):=f_\alpha(s)\sum_{k\geq M(s)+1}\frac{(\log k)^\alpha \tilde \eta_{k,\rho}(s)}{k^{1/2+s}},\quad s\in (0,1/\eee).$$ Using $\eee^x\leq 1+x+(x^2/2)\eee^{|x|}$ for $x\in\mr$ and $\me \tilde \eta_{k,\rho}(s)=0$ we infer, for $u\in\mr$,
\begin{multline*}
\me e^{uX_\alpha(s)}=\prod_{k\geq M(s)+1}\me \exp\Big(uf_\alpha(s)\frac{(\log k)^\alpha \tilde \eta_{k,\rho}(s)}{k^{1/2+s}}\Big)\\\leq \prod_{k\geq M(s)+1} \Big(1+\frac{u^2 (f_\alpha(s))^2}{2}\frac{(\log k)^{2\alpha}}{k^{1+2s}}\me (\tilde \eta_{k,\rho}(s))^2 \exp\Big(|u|f_\alpha(s)\frac{(\log k)^\alpha|\tilde \eta_{k,\rho}(s)|}{k^{1/2+s}}\Big)\Big).
\end{multline*}
Further, the inequality
\begin{equation}\label{eq:mm1}
\begin{split}
|\tilde \eta_{k,\rho}(s)|  \leq |\eta_k|\1_{\mathcal{A}^c_{k,\,\rho}(s)}+\me (|\eta_k|\1_{\mathcal{A}^c_{k,\,\rho}(s)})&\leq \frac{2\rho k^{(1+s)/2}}{(\log k)^\alpha (\log 1/s) (s^{1+2\alpha}\log\log 1/s)^{1/2}}\\ &\leq \frac{2\rho k^{1/2+s}}{(\log k)^\alpha (s^{1+2\alpha}\log\log 1/s)^{1/2}}\quad \text{a.s.},
\end{split}
\end{equation}
which holds true for integer $k\geq 3$ and $s\in (0,1/\eee)$, entails $$\exp\Big(|u|f_\alpha(s)\frac{(\log k)^\alpha|\tilde \eta_{k,\rho}(s)|}{k^{1/2+s}}\Big)\leq \exp\Big(\frac{2\rho c_\alpha^{-1/2} |u|}{\log\log 1/s}\Big)\quad \text{a.s.}$$ This in combination with the inequalities $\me (\tilde \eta_{k,\rho}(s))^2\leq 1$ and $ 1+x\leq \eee^x$ for $x\in\mr$ yields, for $u\in\mr$,
\begin{equation}\label{eq:ss1}
\begin{split}
\me e^{uX_\alpha(s)}&\leq \prod_{k\geq M(s)+1}\exp\Big(\frac{u^2 (f_\alpha(s))^2}{2} \frac{(\log k)^{2 \alpha}}{k^{1+2s}}\exp\Big(\frac{2\rho c_\alpha^{-1/2} |u|}{\log\log 1/s}\Big)\Big)\\& \leq \exp\Big(\frac{u^2}{4\log\log 1/s}\exp\Big(\frac{2\rho c_\alpha^{-1/2} |u|}{\log\log 1/s}\Big)\Big).
\end{split}
\end{equation}
Here, the last inequality is a consequence of $$\sum_{k\geq M(s)+1}
\frac{(\log k)^{2 \alpha}}{k^{1+2s}}\leq \int_1^\infty \frac{(\log x)^{2 \alpha}}{x^{1+2s}}{\rm d}x=\frac{\Gamma(1+2\alpha)}{(2s)^{1+2\alpha}}=\frac{c_\alpha}{2}\frac{1}{s^{1+2\alpha}}.$$ Here, we have used the fact that, for each fixed $s>0$, the function $x\mapsto (\log x)^{2\alpha} x^{-1-2s}$ is decreasing on $(\max (\eee^{2\alpha}, 1), \infty)$.
By the Markov inequality, for $u\geq 0$,
$$\mmp\{X_\alpha(s_n)>1+\gamma\}\leq e^{-(1+\gamma)u}\me e^{uX_\alpha(s_n)}\leq \exp\Big(-(1+\gamma)u+\frac{u^2}{4\log\log 1/s_n}\exp\Big(\frac{2\rho c_\alpha^{-1/2}u}{\log\log 1/s_n}\Big)\Big).$$ Putting $u=2(1+\gamma)\log\log 1/s_n$ we obtain
\begin{multline*}
\mmp\{X_\alpha(s_n)>1+\gamma\}\leq \exp(-(1+\gamma)^2(2-\exp(4(1+\gamma)\rho c_\alpha^{-1/2}))\log\log 1/s_n)\\=\frac{1}{n^{(1-\gamma)(1+\gamma)^2(2-\exp(4(1+\gamma)\rho c_\alpha^{-1/2}))}}.
\end{multline*}
Hence, $\sum_{n\geq 1}\mmp\{X_\alpha(s_n)>1+\gamma\}<\infty$, and an appeal to the Borel-Cantelli lemma completes the proof of Lemma \ref{4}.
\end{proof}

\begin{lemma}\label{5}
Let $\rho=\rho(\gamma)$ and $(s_n)_{n\in\mn}$ be as in Lemma \ref{4} with the only difference that $\gamma\in (0,1/2)$. For $s\in [s_{n+1}, s_n]$, $$\lim_{n\to\infty}f_\alpha(s)\Big(\sum_{k\geq M(s)+1}\frac{(\log k)^\alpha }{k^{1/2+s}}\tilde \eta_{k,\,\rho}(s)-\sum_{k\geq M(s_{n+1})+1} \frac{(\log k)^\alpha}{k^{1/2+s_{n+1}}}\tilde \eta_{k,\,\rho}(s_{n+1})\Big)=0\quad\text{{\rm a.s.}}$$
\end{lemma}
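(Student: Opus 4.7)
The plan is to write the inner difference in the lemma as the sum of three pieces corresponding to the three sources of $s$-dependence between the summands at $s$ and at $s_{n+1}$: (P1) the shift of the summation cutoff from $k>M(s)$ to $k>M(s_{n+1})$; (P2) the change of the exponent from $1/2+s$ to $1/2+s_{n+1}$ in the factor $k^{-1/2-s}$; and (P3) the change of the truncation set from $\mathcal{A}^c_{k,\rho}(s)$ to $\mathcal{A}^c_{k,\rho}(s_{n+1})$ inside $\tilde\eta_{k,\rho}$. For each of these pieces I would show that its product with $f_\alpha(s)$ tends to $0$ almost surely, uniformly in $s\in[s_{n+1},s_n]$.

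The crucial quantitative input is that $s_n=\exp(-n^{1-\gamma})$ forces $s_n/s_{n+1}=\exp((1-\gamma)n^{-\gamma}(1+o(1)))\to 1$, and consequently $M(s_{n+1})/M(s_n)\to 1$, $f_\alpha(s)/f_\alpha(s_{n+1})\to 1$, and the threshold entering the definition of $\mathcal{A}_{k,\rho}^c(s)$ varies by a factor $1+o(1)$ on $[s_{n+1},s_n]$. For (P1) the additional range of indices $M(s)<k\le M(s_{n+1})$ has length of order $s_n^{-1}n^{-\gamma}$, and I would reuse the exponential-moment estimate~\eqref{eq:ss1} of Lemma~\ref{4} restricted to this short range; the replacement of the full tail sum $\sum_{k>M(s)}(\log k)^{2\alpha}/k^{1+2s}$ by its fragment over the short range produces an extra factor of order $n^{-\gamma}$ that, together with the Markov exponent $u=2\log\log(1/s_n)$, yields a tail probability summable over $n$. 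For (P2) I would factor the telescoping kernel as $k^{-1/2-s_{n+1}}(\eee^{-(s-s_{n+1})\log k}-1)$, use that $(s-s_{n+1})\log k=o(1)$ uniformly over $k$ of size $\exp(o(1/s_n))$, and handle the tail $k\gtrsim \exp(1/s_n)$ directly by a second-moment bound exploiting $\me(\tilde\eta_{k,\rho}(s))^2\le 1$. For (P3) I would use the monotonicity in $s$ of the truncation threshold for small $s$ to express the indicator difference as that of a narrow annulus $\{L(s)<|\eta_k|\le L(s_{n+1})\}$, and then follow the recipe of Lemma~\ref{3} (for both the random and the mean parts) with $\rho$ replaced by a slightly larger constant to absorb the $s$-variation.

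Uniform convergence in $s\in[s_{n+1},s_n]$ would be secured by exploiting the monotonicity in $s$ of $k^{-1/2-s}$ and of the truncation threshold, which lets each supremum be estimated by its value at the endpoints; a chaining argument on a dyadic subdivision of $[s_{n+1},s_n]$ is an alternative. The main obstacle will be calibrating the Markov exponent in the estimates for (P1) and (P2) so that the resulting tail bounds, summed over $n$, still form a convergent series: the natural scale $\log\log(1/s_n)=(1-\gamma)\log n$ is only logarithmic, so closing a Borel-Cantelli argument depends on converting the additional smallness $n^{-\gamma}$ available in (P1) and (P2) — coming respectively from $M(s_{n+1})/M(s_n)-1$ and $(s-s_{n+1})/s_n$ — into a polynomial-in-$n$ gain in the tail probability, in a manner entirely analogous to the way the product $(1-\gamma)(1+\gamma)^2(2-\exp(4(1+\gamma)\rho c_\alpha^{-1/2}))>1$ was arranged in Lemma~\ref{4}.
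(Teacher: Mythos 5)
Your three-way decomposition of the inner difference is exactly the one used in the paper: the paper writes the difference as $I_{n,1}(s)+I_{n,2}(s)+I_{n,3}(s)$, corresponding to your (P1), (P3), (P2) respectively (in that order). Your treatment of (P3) matches the paper's treatment of $I_{n,2}$, which simply dominates each of the two truncations by the quantities of Lemma~\ref{3} and invokes the relation $f_\alpha(s_n)/f_\alpha(s_{n+1})\to1$. For (P1) the paper takes a different route from you: it splits $\tilde\eta_{k,\rho}(s)=\eta_k-(\eta_k\1_{\mathcal{A}_{k,\rho}(s)}-\me\eta_k\1_{\mathcal{A}_{k,\rho}(s)})$, kills the indicator part with Lemma~\ref{3}, and handles the raw-$\eta_k$ part by summation by parts together with the classical LIL bound $\sup_{j\le n}|T_j|=O((n\log\log n)^{1/2})$. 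The virtue of that route is that the resulting bound on $\sup_{s\in[s_{n+1},s_n]}f_\alpha(s)|I_{n,1}(s)|$ depends on $n$ only, so uniformity in $s$ comes for free. Your proposal to run an exponential-moment estimate over the short range instead is plausible at a fixed $s$, but you would then still have to uniformize over $s$; and since the range $(M(s),M(s_{n+1})]$, the factor $k^{-1/2-s}$, and the truncation inside $\tilde\eta_{k,\rho}(s)$ all move with $s$ while the summands are signed and centered, you would need chaining here as well — you cannot estimate a signed, cancellative sum by its endpoint values via ``monotonicity.''

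The genuine gap is in your treatment of (P2), the exponent change, which is $I_{n,3}$ in the paper. First, the threshold $k\gtrsim\exp(1/s_n)$ is miscalibrated: the second moment of the tail sum, after multiplication by $f_\alpha(s)^2$, is of order $1/\log\log(1/s_n)\asymp1/\log n$, which via Chebyshev gives tail probabilities of order $1/\log n$, not summable over $n$. You would need a threshold of the form $\exp(A(\log n)/s_n)$ with $A>1$ to make the tail summable, at the cost of weakening the middle-range smallness from $n^{-\gamma}$ to $n^{-\gamma}\log n$ (still usable, but your claim ``$(s-s_{n+1})\log k=o(1)$'' then needs this extra $\log n$ tracked through the Markov exponent). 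Second, and more seriously, the uniformity over $s\in[s_{n+1},s_n]$ in the middle range is not secured by the monotonicity you invoke. The kernel $k^{-1/2-s}-k^{-1/2-s_{n+1}}$ is indeed monotone in $s$, but the summands carry the signed centered variables $\tilde\eta_{k,\rho}(s_{n+1})$; a signed sum with kernels that are monotone in a parameter is not itself monotone, and you cannot pass to absolute values without losing the $s^{1+2\alpha}$ scaling entirely (the absolute-value bound does not converge after multiplication by $f_\alpha(s)$). The paper resolves exactly this point: it freezes the truncated variables at $s_{n+1}$, observes that $Y_n(s):=\sum_{k>M(s_{n+1})}(\log k)^\alpha\tilde\eta_{k,\rho}(s_{n+1})/k^{1/2+s}$ is a.s.\ analytic in $s$, controls its increments over a dyadic net via the mean value theorem and exponential Markov bounds, and then sums over the net with weights $a_j=(j+1)2^{-j}$. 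This chaining is not an ``alternative'' to your monotonicity argument; it is the mechanism by which uniformity is actually obtained, and your sketch has no substitute for it.
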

\begin{proof}
Let $s\in [s_{n+1}, s_n]$. Using the fact that $M$ is a nonincreasing function, we write
\begin{multline*}
\sum_{k\geq M(s)+1}\frac{(\log k)^\alpha\tilde \eta_{k,\,\rho}(s)}{k^{1/2+s}}-\sum_{k\geq M(s_{n+1})+1}\frac{(\log k)^\alpha\tilde \eta_{k,\,\rho}(s_{n+1})}{k^{1/2+s_{n+1}}}\\=
\sum_{k=M(s)+1}^{M(s_{n+1})}\frac{(\log k)^\alpha\tilde \eta_{k,\,\rho}(s)}{k^{1/2+s}}+\sum_{k\geq M(s_{n+1})+1}\frac{(\log k)^\alpha(\tilde \eta_{k,\,\rho}(s)-\tilde \eta_{k,\,\rho}(s_{n+1}))}{k^{1/2+s}}\\+ \sum_{k\geq M(s_{n+1})+1}(\log k)^\alpha\Big(\frac{1}{k^{1/2+s}}-\frac{1}{k^{1/2+s_{n+1}}}\Big)\tilde \eta_{k,\,\rho}(s_{n+1})=:I_{n,1}(s)+I_{n,2}(s)+I_{n,3}(s).
\end{multline*}

\noindent {\sc Analysis of $I_{n,1}(s)$}. Recalling that $\me \eta_k = 0$, we further decompose $I_{n,1}$ as follows:
\begin{multline*}
I_{n,1}(s)=\sum_{k=M(s)+1}^{M(s_{n+1})}\frac{(\log k)^\alpha \eta_k}{k^{1/2+s}}+\sum_{k=M(s)+1}^{M(s_{n+1})}\frac{(\log k)^\alpha(- \eta_k\1_{\mathcal{A}_{k,\,\rho}(s)}+\me (\eta_k\1_{\mathcal{A}_{k,\,\rho}(s)}))}{k^{1/2+s}}\\=:I_{n,11}(s)+I_{n,12}(s).
\end{multline*}
We proceed by investigating the summands separately and start with $I_{n,12}(s)$:
$$f_\alpha(s)|I_{n,12}(s)|\leq f_\alpha(s)\sum_{k\geq M(s)+1} \frac{(\log k)^\alpha(|\eta_k|\1_{\mathcal{A}_{k,\,\rho}(s)}+\me (|\eta_k|\1_{\mathcal{A}_{k,\,\rho}(s)}))}{k^{1/2+s}}~\to~ 0,\quad n\to\infty\quad\text{a.s.},$$ where the limit relation is secured by Lemma \ref{3}. Now we pass to $I_{n,11}(s)$. Summation by parts yields
\begin{multline*}
I_{n,11}(s)=\frac{(\log M(s_{n+1}))^\alpha T_{M(s_{n+1})}}{(M(s_{n+1}))^{1/2+s}}-\frac{(\log (M(s)+1))^\alpha T_{M(s)}}{(M(s)+1)^{1/2+s}}\\+
\sum_{k=M(s)+1}^{M(s_{n+1})-1}\Big(\frac{(\log k)^\alpha}{k^{1/2+s}}-\frac{(\log (k+1))^\alpha}{(k+1)^{1/2+s}}\Big)T_k,
\end{multline*}
where, as before, $T_n=\eta_1+\ldots+\eta_n$ for $n\in\mn$. Since $\lim_{n\to\infty}(s_n/s_{n+1})=1$ and the function $f_\alpha$ is regularly varying at $0$ (of index $1/2+\alpha$) we infer
\begin{equation}\label{eq:1}
\lim_{n\to\infty}\frac{f_\alpha(s_n)}{f_\alpha(s_{n+1})}=1.
\end{equation}
The function $f_\alpha$ is increasing on $(0,1/\eee)$. Using this in combination with $\lim_{n\to\infty}(M(s_{n+1}))^{s_{n+1}}=1$, \eqref{eq:2} and \eqref{eq:1} we obtain
\begin{multline*}
f_\alpha (s)\frac{(\log M(s_{n+1}))^\alpha |T_{M(s_{n+1})}|}{(M(s_{n+1}))^{1/2+s}}\leq f_\alpha (s_n)\frac{(\log M(s_{n+1}))^\alpha|T_{M(s_{n+1})}|}{(M(s_{n+1}))^{1/2+s_{n+1}}}\\\sim f_\alpha(s_{n+1})\frac{(\log M(s_{n+1}))^\alpha|T_{M(s_{n+1})}|}{(M(s_{n+1}))^{1/2}}\\=f_\alpha (s_{n+1})O((\log M(s_{n+1}))^\alpha(\log\log M(s_{n+1}))^{1/2})~\to~ 0,\quad n\to\infty \quad\text{a.s.}
\end{multline*}
A similar but simpler argument enables us to conclude that $$\lim_{n\to\infty}\sup_{s\in [s_{n+1},\, s_n]}f_\alpha(s)\frac{(\log (M(s)+1))^\alpha|T_{M(s)}|}{(M(s)+1)^{1/2+s}}=0\quad\text{a.s.}$$ Further, for small enough $s>0$,
\begin{align*}
&\hspace{-1.5cm}f_\alpha (s)\left|\sum_{k=M(s)+1}^{M(s_{n+1})-1}\left(\frac{(\log k)^\alpha}{k^{1/2+s}}-\frac{(\log (k+1))^\alpha}{(k+1)^{1/2+s}}\right)T_k\right|\\
&\leq f_\alpha(s)\sum_{k=M(s)+1}^{M(s_{n+1})-1}\left(\frac{(\log k)^\alpha}{k^{1/2+s}}-\frac{(\log (k+1))^\alpha}{(k+1)^{1/2+s}}\right)|T_k|\\
&\leq f_\alpha (s_n)\left(\sup_{j\leq M(s_{n+1})}|T_j|\right)\sum_{k=M(s)+1}^{M(s_{n+1})-1}\left(\frac{(\log k)^\alpha}{k^{1/2+s}}-\frac{(\log (k+1))^\alpha}{(k+1)^{1/2+s}}\right)\\
&\leq f_\alpha(s_n)\left(\sup_{j\leq M(s_{n+1})}|T_j|\right)\frac{(\log M(s))^\alpha}{(M(s))^{1/2+s}}\\
&\leq f_\alpha(s_n)O((M(s_{n+1})\log\log M(s_{n+1}))^{1/2})\frac{(\log M(s_{n+1}))^\alpha}{M(s_n)^{1/2+s_{n+1}}}\\
&\sim~f_\alpha (s_{n+1})O((\log M(s_{n+1}))^\alpha(\log\log M(s_{n+1}))^{1/2})~\to~0,\quad n\to\infty\quad\text{a.s.}
\end{align*}
We have used \eqref{eq:2} for the last inequality and \eqref{eq:1}, $\lim_{n\to\infty}(M(s_{n+1})/M(s_n))=1$\newline and $\lim_{n\to\infty}(M(s_n))^{s_{n+1}}=1$ for the asymptotic relation. Thus, we have proved that $$\lim_{n\to\infty}\sup_{s\in [s_{n+1},\,s_n]}f_\alpha(s)I_{n,1}(s)=0\quad\text{a.s.}$$

\noindent {\sc Analysis of $I_{n,2}(s)$}. For $n\geq 2$,
\begin{multline*}
f_\alpha(s)|I_{n,2}(s)|\\=f_\alpha(s)\left|\sum_{k\geq M(s_{n+1})+1}\frac{(\log k)^\alpha (\eta_k\1_{\mathcal{A}_{k,\,\rho}(s)}-\me (\eta_k\1_{\mathcal{A}_{k,\,\rho}(s)})-\eta_k\1_{\mathcal{A}_{k,\,\rho}(s_{n+1})}+\me (\eta_k\1_{\mathcal{A}_{k,\,\rho}(s_{n+1})}))}{k^{1/2+s}}\right|\\\leq f_\alpha(s)\sum_{k\geq M(s)+1}\frac{(\log k)^\alpha (|\eta_k|\1_{\mathcal{A}_{k,\,\rho}(s)}+\me (|\eta_k|\1_{\mathcal{A}_{k,\,\rho}(s)}))}{k^{1/2+s}}\\+ \frac{f_\alpha(s_n)}{f_\alpha(s_{n+1})}f_\alpha(s_{n+1})\sum_{k\geq M(s_{n+1})+1}\frac{(\log k)^\alpha(|\eta_k|\1_{\mathcal{A}_{k,\,\rho}(s_{n+1})}+\me (|\eta_k|\1_{\mathcal{A}_{k,\,\rho}(s_{n+1})}))}{k^{1/2+s_{n+1}}}.
\end{multline*}
By Lemma \ref{3} and \eqref{eq:1}, the right-hand side converges to $0$ a.s. as $n\to\infty$.

\noindent {\sc Analysis of $I_{n,3}(s)$}. For $n\in\mn$ and $s\in (0,1/\eee)$, put $$Y_n(s):=\sum_{k\geq M(s_{n+1})+1}\frac{(\log k)^\alpha\tilde \eta_{k,\,\rho}(s_{n+1})}{k^{1/2+s}}.$$ We shall show that, for all $u\in [s_{n+1}, s_n]$,
\begin{equation}\label{eq:3}
\lim_{n\to\infty}f_\alpha(s_n)(Y_n(u)-Y_n(s_{n+1}))=0\quad\text{a.s.}
\end{equation}
We shall use the fact that $Y_n$ is a.s.~continuous and differentiable on $[s_{n+1},s_n]\subset (0,\infty)$ for every fixed $n\in\mathbb{N}$. Indeed, since $\E \eta_{k,\,\rho}(s_{n+1})=0$ and $\E (\eta_{k,\,\rho}(s_{n+1}))^2<\infty$, $Y_n$ is actually analytic on $H_0$ as explained in the introduction.

For $j\in\mn_0$ and $n\in\mn$, put $$F_j(n):=\{t_{j,m}(n):=s_{n+1}+2^{-j}m(s_n-s_{n+1}):0\leq m\leq 2^j\}.$$ Observe that $F_j(n)\subseteq F_{j+1}(n)$ and put $F(n):=\bigcup_{j\geq 0}F_j(n)$. The set $F(n)$ is dense in  $[s_{n+1}, s_n]$. For any $ u\in [s_{n+1}, s_n]$, put
$$
u_j:=\max\{v\in F_j(n): v\leq u\}=s_{n+1}+2^{-j}(s_n-s_{n+1})\left\lfloor\frac{2^j(u-s_{n+1})}{s_n-s_{n+1}}\right\rfloor.
$$
Then $\lim_{j\to\infty}u_j=u$ (we suppress the dependence of $u$ and $u_j$ on $n$ for notational simplicity). An important observation is that either $u_{j-1}=u_j$ or $u_{j-1}=u_j-2^{-j}(s_n-s_{n+1})$. Necessarily, $u_j=t_{j,m}$ for some $0\leq m\leq 2^j$, so that either $u_{j-1}=t_{j,m}$ or $u_{j-1}=t_{j,m-1}$. Since $Y_n$ is a.s.\ continuous on $[s_{n+1}, s_n]$ we obtain
\begin{multline*}
|Y_n(u)-Y_n(s_{n+1})|= \lim_{\ell \to \infty} |Y_n(u_\ell)-Y_n(s_{n+1})|\\
=\lim_{\ell \to \infty}\Big|\sum_{j=1}^\ell (Y_n(u_j)-Y_n(u_{j-1}))+Y_n(u_0)-Y_n(s_{n+1})\Big|\\\leq
\lim_{\ell \to \infty} \sum_{j=0}^\ell\max_{1\leq m\leq 2^j}|Y_n(t_{j,m})-Y_n(t_{j,m-1})|= \sum_{j\geq 0} \max_{1\leq m\leq 2^j}|Y_n(t_{j,m})-Y_n(t_{j,m-1})|.
\end{multline*}
Thus, our purpose is to show that, for all $\varepsilon>0$ and $n_0\in\mn$ large enough, $$\sum_{n\geq n_0}\mmp\Big\{\sum_{j\geq 0}\max_{1\leq m\leq 2^j}f_\alpha(s_n)|Y_n(t_{j,m})-Y_n(t_{j,m-1})|>\varepsilon\Big\}<\infty.$$ Put $a_j:=(j+1) 2^{-j}$ for $j\in\mn_0$. Since $\sum_{j\geq 0}a_j<\infty$ and $\varepsilon>0$ is arbitrary, it is enough to prove that, for all $\varepsilon>0$,
\begin{equation}\label{eq:5}
\sum_{n\geq n_0}\sum_{j\geq 0}\mmp\big\{\max_{1\leq m\leq 2^j}f_\alpha(s_n)|Y_n(t_{j,m})-Y_n(t_{j,m-1})|>\varepsilon a_j \big\}<\infty.
\end{equation}
By the mean value theorem for differentiable functions, there exists $r_{j,m}\in [t_{j,m-1}, t_{j,m}]$ such that
\begin{multline}\label{eq:proof_mean_value_thm}
Y_n(t_{j,m})-Y_n(t_{j, m-1})=-(t_{j,m}-t_{j,m-1})\sum_{k\geq M(s_{n+1})+1}\frac{(\log k)^{1+\alpha}}{k^{1/2+r_{j,m}}}\tilde \eta_{k,\,\rho}(s_{n+1})\\=-2^{-j}(s_n-s_{n+1})\sum_{k\geq M(s_{n+1})+1}\frac{(\log k)^{1+\alpha}}{k^{1/2+r_{j,m}}}\tilde \eta_{k,\,\rho}(s_{n+1}).
\end{multline}
Now we argue as in the proof of Lemma \ref{4} and thus refer to that proof as far as some missing fragments are concerned: for $u\in\mr$ and large $n$,
\begin{align*}
&\hspace{-0.5cm}\me \exp\left(\pm u  \sum_{k\geq M(s_{n+1})+1}\frac{(\log k)^{1+\alpha}}{k^{1/2+r_{j,m}}}\tilde \eta_{k,\,\rho}(s_{n+1})\right)\\
&\leq \prod_{k\geq M(s_{n+1})+1}\left(1+\frac{u^2}{2}\frac{(\log k)^{2+2\alpha}}{k^{1+2r_{j,m}}}\me (\tilde \eta_{k,\,\rho}(s_{n+1}))^2\exp\left(\frac{|u|(\log k)^{1+\alpha}}{k^{1/2+r_{j,m}}}|\tilde \eta_{k,\,\rho}(s_{n+1})|\right)\right)\\
&\leq \prod_{k\geq M(s_{n+1})+1}\left(1+\frac{u^2}{2}\frac{(\log k)^{2+2\alpha}}{k^{1+2s_{n+1}}}\me (\tilde \eta_{k,\,\rho}(s_{n+1}))^2\exp\left(\frac{|u|(\log k)^{1+\alpha}}{k^{1/2+s_{n+1}}}|\tilde \eta_{k,\,\rho}(s_{n+1})|\right)\right).
\end{align*}
We have used $r_{j,m}\geq t_{j, m-1}\geq s_{n+1}$ and monotonicity of the functions involved for the second inequality. Note that (compare with \eqref{eq:star}), for $k \geq M(s_{n+1})+1$,
$$
\frac{\log k}{k^{s_{n+1}/2}}\leq \frac{2}{\eee s_{n+1}}\leq \frac{1}{s_{n+1}}
$$
and thereupon
\begin{align*}
|\tilde \eta_{k,\rho}(s_{n+1})|&\leq |\eta_k|\1_{\mathcal{A}^c_{k,\,\rho}(s_{n+1})}+\me (|\eta_k|\1_{\mathcal{A}^c_{k,\,\rho}(s_{n+1})})\\
&\leq \frac{2\rho k^{(1+s_{n+1})/2}}{(\log k)^\alpha (\log 1/s_{n+1})(s_{n+1}^{1+2\alpha}\log\log 1/s_{n+1})^{1/2}}\\
&\leq \frac{2\rho k^{1/2+s_{n+1}}}{(\log k)^{1+\alpha} (\log 1/s_{n+1})(s_{n+1}^{3+2\alpha}\log\log 1/s_{n+1})^{1/2}}\quad \text{a.s.}
\end{align*}
Thus, using again the inequality $1+x\leq \eee^x$ for $x\in\mathbb{R}$,
\begin{align}
&\hspace{-1cm}\me \exp\left(\pm u  \sum_{k\geq M(s_{n+1})+1}\frac{(\log k)^{1+\alpha}}{k^{1/2+r_{j,m}}}\tilde \eta_{k,\,\rho}(s_{n+1})\right)\notag \\
&\leq \prod_{k\geq M(s_{n+1})+1}\left(1+\frac{u^2}{2}\frac{(\log k)^{2+2\alpha}}{k^{1+2s_{n+1}}}\exp\left(\frac{2\rho |u|}{(\log 1/s_{n+1})(s_{n+1}^{3+2\alpha}\log\log 1/s_{n+1})^{1/2}}\right)\right)\notag \\
&\leq \exp\left(\frac{u^2}{2}\exp\left(\frac{2\rho |u|}{(\log 1/s_{n+1})(s_{n+1}^{3+2\alpha}\log\log 1/s_{n+1})^{1/2}}\right) \sum_{k\geq M(s_{n+1})+1}\frac{(\log k)^{2+2\alpha}}{k^{1+2s_{n+1}}}\right)\notag \\
&\leq \exp\left(\frac{b_\alpha u^2}{2 s_{n+1}^{3+2\alpha}}\exp\left(\frac{2\rho |u|}{(\log 1/s_{n+1})(s_{n+1}^{3+2\alpha}\log\log 1/s_{n+1})^{1/2}}\right)\right),\label{eq:proof_mgf_estimate}
\end{align}
where $b_\alpha:=\Gamma(3+2\alpha) 2^{-3-2\alpha}$. The last inequality follows from $$\sum_{k\geq M(s_{n+1})+1}\frac{(\log k)^{2+2\alpha}}{k^{1+2s_{n+1}}}\leq \int_1^\infty \frac{(\log y)^{2+2\alpha}}{y^{1+2s_{n+1}}}{\rm d}y=\int_0^\infty x^{2+2\alpha} \eee^{-2s_{n+1}x}{\rm d}x=\frac{\Gamma(3+2\alpha)}{(2s_{n+1})^{3+2\alpha}}=\frac{b_\alpha}{s_{n+1}^{3+2\alpha}}.$$
By \eqref{eq:proof_mean_value_thm} and Markov's inequality, for $u\geq 0$ and $n\geq 2$,
\begin{align*}
&\hspace{-0.3cm}\mmp\left\{f_\alpha(s_n)|Y_n(t_{j,m})-Y_n(t_{j,m-1})|>\varepsilon a_j \right\}\\
&=\mmp\left\{\left|\sum_{k\geq M(s_{n+1})+1}\frac{(\log k)^{1+\alpha}}{k^{1/2+r_{j,m}}}\tilde \eta_{k,\,\rho}(s_{n+1})\right|>\frac{\varepsilon(j+1)}{f_\alpha(s_n)(s_n-s_{n+1})}\right\}\\
&\leq \exp\left(-u \frac{\varepsilon(j+1)}{f_\alpha(s_n)(s_n-s_{n+1})}\right)\me \exp \left(u \left|\sum_{k\geq M(s_{n+1})+1}\frac{(\log k)^{1+\alpha}}{k^{1/2+r_{j,m}}}\tilde \eta_{k,\,\rho}(s_{n+1})\right|\right).
\end{align*}
Invoking~\eqref{eq:proof_mgf_estimate} and $\eee^{u|x|}\leq \eee^{ux}+\eee^{-ux}$ for $x\in\mr$, we infer
\begin{multline*}
\mmp\left\{f_\alpha(s_n)|Y_n(t_{j,m})-Y_n(t_{j,m-1})|>\varepsilon a_j \right\}\\
\leq 2\exp\left(-\frac{u\varepsilon(j+1)}{f_\alpha(s_n)(s_n-s_{n+1})}+\frac{b_\alpha u^2}{2 s_{n+1}^{3+2\alpha}}\exp\left(\frac{2\rho |u|}{(\log 1/s_{n+1})(s_{n+1}^{3+2\alpha}\log\log 1/s_{n+1})^{1/2}}\right)\right).
\end{multline*}
Putting
$$
u=\frac{3\varepsilon s_{n+1}^{3+2\alpha}}{2b_\alpha f_\alpha(s_n)(s_n-s_{n+1})},\quad k_n:=\frac{s_{n+1}^{3+2\alpha}}{b_\alpha (f_\alpha(s_n))^2(s_n-s_{n+1})^2}
$$
and
$$
\ell_n:=\frac{s_{n+1}^{3+2\alpha}}{b_\alpha f_\alpha(s_n)(s_n-s_{n+1})(\log 1/s_{n+1})(s_{n+1}^{3+2\alpha}\log\log 1/s_{n+1})^{1/2}}
$$
we obtain, for large $n$,
\begin{multline*}
\mmp\big\{f_\alpha(s_n)|Y_n(t_{j,m})-Y_n(t_{j,m-1})|>\varepsilon a_j \big\}\leq 2\exp(- 3\varepsilon^2 (j+1)k_n/2) \exp\Big(9\varepsilon^2 k_n \exp(3 \rho\varepsilon \ell_n)/8\Big)\\
\leq 2\exp(-3\varepsilon^2 (j+1)k_n/2) \exp\Big(5\varepsilon^2 k_n/4\Big).
\end{multline*}
The last inequality is a consequence of $$\ell_n~\sim~\frac{c_\alpha^{1/2}s_{n+1}}{b_\alpha(s_n-s_{n+1})(\log 1/s_{n+1})}~\sim~\frac{c_\alpha^{1/2}}{b_\alpha (1-\gamma)} n^{2\gamma-1}~\to~0,\quad n\to\infty$$ which ensures that $\exp(3\rho\varepsilon \ell_n)\leq 10/9$ for large enough $n$. Recall that $\gamma\in (0,1/2)$ by assumption.

To proceed, observe that
\begin{equation}\label{eq:4}
k_n~\sim~b_\alpha^{-1}c_\alpha(1-\gamma)^{-1}n^{2\gamma}\log n~\to~\infty,\quad n\to\infty.
\end{equation}
Hence, for large $n$ satisfying $k_n>(2\log 2)/(3\varepsilon^2)$,
\begin{multline*}
\sum_{j\geq 0}\mmp\big\{\max_{1\leq m\leq 2^j}f_\alpha(s_n)|Y_n(t_{j,m})-Y_n(t_{j,m-1})|>\varepsilon a_j \big\}\\
\leq \sum_{j\geq 0}2^j 2\exp(-3\varepsilon^2 (j+1)k_n/2) \exp\Big(5\varepsilon^2 k_n/4\Big)=\frac{2\exp(-\varepsilon^2 k_n/4)}{1-2\exp(-3\varepsilon^2 k_n/2)}.
\end{multline*}
Finally, \eqref{eq:4} entails \eqref{eq:5}.
\end{proof}

We are ready to prove Proposition \ref{lilhalf}.
\begin{proof}[Proof of Proposition \ref{lilhalf}]
We only prove \eqref{lil1}, for \eqref{lil2} is a consequence of \eqref{lil1}  with $-\eta_k$ replacing $\eta_k$. Recall our convention that $\sigma_1^2=1$.

Fix arbitrary $\gamma\in (0,1/2)$ and pick $\rho=\rho(\gamma)$ such that~\eqref{eq:rho_choice} holds. By Lemmas~\ref{4} and~\ref{5}
\begin{equation}\label{eq:proof_of_lilhalf1}
{\lim\sup}_{s\to 0+}f_\alpha(s)\sum_{k\geq M(s)+1}\frac{(\log k)^\alpha}{k^{1/2+s}}\tilde{\eta}_{k,\rho}(s)\leq 1+\gamma\quad\text{a.s.}
\end{equation}
Using relation~\eqref{aux011} and the fact that $\me (\eta_k\1_{\mathcal{A}^c_{k,\,\rho}(s)})=-\me(\eta_k\1_{\mathcal{A}_{k,\,\rho}(s)})$ we conclude that~\eqref{eq:proof_of_lilhalf1} entails
$$
{\lim\sup}_{s\to 0+}f_\alpha(s)\sum_{k\geq M(s)+1}\frac{(\log k)^\alpha}{k^{1/2+s}}\eta_k\1_{\mathcal{A}^c_{k,\,\rho}(s)}\leq 1+\gamma\quad\text{a.s.}
$$
By Lemma~\ref{1} and formula~\eqref{aux010} the latter yields
$$
{\lim\sup}_{s\to 0+}f_\alpha(s)\sum_{k\geq 2}\frac{(\log k)^\alpha}{k^{1/2+s}}\eta_k\leq 1+\gamma\quad\text{a.s.},
$$
which is equivalent to~\eqref{lil1} since the left-hand side does not depend on $\gamma$.
\end{proof}

Here is another intermediate result needed for the proof of Theorem \ref{main}.
\begin{assertion}\label{lilhalf2}
Under the assumptions of Theorem \ref{main},
\begin{equation}\label{lil11}
{\lim\sup}_{s\to 0+}\Big(\frac{s^{1+2\alpha}}{\log\log 1/s}\Big)^{1/2}\sum_{k\geq 2}\frac{(\log k)^\alpha}{k^{1/2+s}}\eta_k \geq
\Big(\frac{\sigma_1^2 \Gamma(1+2\alpha)}{2^{2\alpha}}\Big)^{1/2} \quad\text{{\rm a.s.}}
\end{equation}
and
\begin{equation}\label{lil21}
{\lim\inf}_{s\to 0+ }\Big(\frac{s^{1+2\alpha}}{\log\log 1/s}\Big)^{1/2}\sum_{k\geq 2} \frac{(\log k)^\alpha}{k^{1/2+s}} \eta_k\leq -\Big(\frac{\sigma_1^2 \Gamma(1+2\alpha)}{2^{2\alpha}}\Big)^{1/2} \quad\text{{\rm a.s.}}
\end{equation}
\end{assertion}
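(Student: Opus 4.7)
The plan is to prove \eqref{lil11}; \eqref{lil21} follows by replacing $\eta_k$ with $-\eta_k$, and as in Proposition~\ref{lilhalf} we normalize $\sigma_1^2=1$, so the goal becomes $\limsup_{s\to 0+}X(s)\geq 1$ almost surely, where $X(s):=f_\alpha(s)\sum_{k\geq 2}(\log k)^\alpha\eta_k/k^{1/2+s}$. Combined with Proposition~\ref{lilhalf}, this will complete the LIL. The approach is the classical ``independent blocks plus second Borel--Cantelli'' half of an LIL. Fix $\varepsilon\in(0,1)$, pick a small $\delta>0$, and put $s_n:=\exp(-n^{1+\delta})$, so that $\log\log(1/s_n)\sim(1+\delta)\log n$ while $s_n/s_{n-1}\sim\exp(-(1+\delta)n^\delta)\to 0$ super-polynomially. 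For a large constant $A>0$ (to be chosen later), set $k_0:=1$, $k_n:=\lfloor\eee^{A/s_n}\rfloor$, and decompose $X(s_n)=Y_n+R_n^{(1)}+R_n^{(2)}$, where
\begin{equation*}
Y_n:=f_\alpha(s_n)\!\!\!\sum_{k=k_{n-1}+1}^{k_n}\!\!\!\frac{(\log k)^\alpha\eta_k}{k^{1/2+s_n}},\ R_n^{(1)}:=f_\alpha(s_n)\sum_{k=2}^{k_{n-1}}\frac{(\log k)^\alpha\eta_k}{k^{1/2+s_n}},\ R_n^{(2)}:=f_\alpha(s_n)\sum_{k>k_n}\frac{(\log k)^\alpha\eta_k}{k^{1/2+s_n}}.
\end{equation*}
The blocks $(k_{n-1},k_n]$ being pairwise disjoint, the $(Y_n)_{n\geq 1}$ are independent.

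The substitution $y=s_n\log k$ and Lemma~\ref{lem:zeta_derivatives} yield ${\rm Var}\,(Y_n)=(V_A+o(1))/(2\log\log(1/s_n))$, with $V_A:=(2/c_\alpha)\int_0^A y^{2\alpha}\eee^{-2y}\,{\rm d}y\uparrow 1$ as $A\to\infty$, where vanishing of the lower integration limit $As_n/s_{n-1}\to 0$ is crucial. The key step is a one-sided Gaussian lower bound
\begin{equation*}
\mmp\{Y_n\geq 1-\varepsilon\}\geq c_1(\log(1/s_n))^{-(1-\varepsilon)^2/V_A}(\log\log(1/s_n))^{-1/2},\quad n\text{ large}.
\end{equation*}
Since $\eta$ is only assumed to have a finite second moment, I would truncate each $\eta_k$ at a level $L_n\to\infty$ (e.g., $L_n=(\log\log(1/s_n))^{1/2}$), replace $Y_n$ by the centered truncated analogue $\tilde Y_n$ (the truncation error being $L^2$-negligible by $\me\eta^2<\infty$), verify that the bounded summands $a_k(n)\tilde\eta_{k,n}$, where $a_k(n):=f_\alpha(s_n)(\log k)^\alpha/k^{1/2+s_n}$, satisfy $\max_k|a_k(n)\tilde\eta_{k,n}|=o({\rm sd}\,(\tilde Y_n))$, and then invoke Cram\'er's moderate-deviation theorem (equivalently, an exponential-tilting estimate obtained by reversing the elementary inequalities used in Lemma~\ref{4}) to deduce the bound above. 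Specialized to $s_n=\exp(-n^{1+\delta})$ the right-hand side becomes $\gtrsim n^{-(1+\delta)(1-\varepsilon)^2/V_A}(\log n)^{-1/2}$, and picking $\delta$ small and $A$ large so that $(1+\delta)(1-\varepsilon)^2<V_A$ forces $\sum_n\mmp\{Y_n\geq 1-\varepsilon\}=\infty$. Independence of the $Y_n$ and the second Borel--Cantelli lemma then yield $\{Y_n\geq 1-\varepsilon\}$ i.o.\ almost surely.

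For $R_n^{(1)}$, the same integral approximation gives $\me(R_n^{(1)})^2\leq C(A)(s_n/s_{n-1})^{1+2\alpha}/\log\log(1/s_n)$, summable in $n$ by the super-polynomial decay of $s_n/s_{n-1}$; Chebyshev's inequality and the first Borel--Cantelli lemma give $R_n^{(1)}\to 0$ a.s. For $R_n^{(2)}$, I would replicate the argument of Lemmas~\ref{1}--\ref{5} with the tail series $\sum_{k>\lfloor\eee^{A/s}\rfloor}$ in place of the full series; since those proofs rely only on the zeta-type asymptotic and on the independence of $(\eta_k)$, they adapt mutatis mutandis and give $\limsup_n|R_n^{(2)}|\leq\sqrt{1-V_A}$ a.s., which is less than $\varepsilon/2$ once $A$ is large enough. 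Combining the three bounds, on an almost sure event one has $X(s_n)\geq (1-\varepsilon)-\varepsilon/2-o(1)\geq 1-2\varepsilon$ infinitely often, whence $\limsup_{s\to 0+}X(s)\geq 1-2\varepsilon$ a.s., and letting $\varepsilon\downarrow 0$ finishes \eqref{lil11}. The main obstacle is the one-sided Gaussian lower bound on $\mmp\{Y_n\geq 1-\varepsilon\}$: because $\eta$ is not assumed to have any exponential moment, the truncation level $L_n$ must be selected so as to simultaneously (i) keep the truncation perturbations in both centering and variance negligible, (ii) let a Cram\'er/Sakhanenko-type estimate apply on the moderate-deviation scale $\sqrt{2\log\log(1/s_n)/V_A}\cdot{\rm sd}\,(\tilde Y_n)$, and (iii) preserve the sharp leading exponent $(1-\varepsilon)^2/(2V_A)\log\log(1/s_n)$; the cleanest route seems to be to reproduce the exponential-tilting computation of Lemma~\ref{4} with the inequality $1+x\leq\eee^x$ replaced by its lower counterpart, valid because the truncated summands are bounded.
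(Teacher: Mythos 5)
Your proposal is correct and follows essentially the same route as the paper's proof of Proposition~\ref{lilhalf2}: pass to a rapidly decaying subsequence $s_n=\exp(-n^{1+\delta})$, split the Dirichlet series into consecutive disjoint blocks so that the block sums are independent, truncate $\eta_k$ to compensate for the absence of exponential moments, obtain a moderate-deviation lower bound on $\mmp\{Y_n\geq 1-\varepsilon\}$ by exponential tilting (your ``reversing the inequalities of Lemma~\ref{4}'' is exactly the change of measure $\Q_{s,u}$ the paper introduces in Lemma~\ref{lem:new}), and conclude via the converse Borel--Cantelli lemma. The one place where you deviate is the choice of block boundaries. The paper uses cutoffs $N_1(s), N_2(s)$ with $(N_1(s))^s\to 1$ and $(N_2(s))^s\to\infty$, chosen so that $N_1(\ms_{n+1})\geq N_2(\ms_n)$; the price of making $(N_2(s))^s\to\infty$ is that each block captures asymptotically the entire variance $c_\alpha/(2\log\log 1/s)$, and correspondingly Lemma~\ref{lem:l1} shows the head and tail contributions vanish a.s. You instead fix $A$ and take $k_n=\lfloor\eee^{A/s_n}\rfloor$, so that $(k_n)^{s_n}=\eee^A$ is bounded. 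This captures only a fraction $V_A<1$ of the variance, and the tail $R_n^{(2)}$ therefore does not vanish; you compensate by a secondary Lemma~\ref{4}-type upper-bound argument yielding $\limsup_n|R_n^{(2)}|\leq\sqrt{1-V_A}$, which can be made $<\varepsilon/2$ by taking $A$ large. Both versions work; the paper's growing cutoff $N_2$ eliminates the need for the auxiliary LIL upper bound on the tail at the cost of a somewhat fiddlier verification that the disjointness condition $N_1(\ms_{n+1})\geq N_2(\ms_n)$ can be met (see the explicit choices of $N_1$, $N_2$, $\beta_n$ in the proof of Lemma~\ref{lem:new}), whereas your fixed-$A$ scheme makes the independence structure transparent but transfers the work to controlling $R_n^{(2)}$. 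One small technical remark: a uniform truncation level $L_n=(\log\log 1/s_n)^{1/2}$ would in fact work (and be cruder) than the paper's $k$-dependent truncation event $\mathcal{A}_{k,\rho}(s)$, since in your scheme the tilting parameter $u\asymp\log\log(1/s_n)$ while $\sup_k|a_k(n)|L_n=O(s_n^{1/2+\alpha}L_n)\to 0$ super-polynomially, so $u\cdot\sup_k|a_k(n)\tilde\eta_{k,n}|=o(1)$, which is all the Cram\'er-type estimate needs; the paper's more delicate truncation is tuned to give the constant $2\rho c_\alpha^{-1/2}/\log\log(1/s)$ in the exponent, but that precision is not essential here.
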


Proposition \ref{lilhalf2} will be proved with the help of Lemmas \ref{1} and \ref{3} and two additional lemmas. As in the previous part, without loss of generality, we assume that $\sigma_1^2=1$. We shall also use the sets $\mathcal{A}_{k,\rho}(s)$ and the corresponding truncated variables $\tilde{\eta}_{k,\rho}(s)$ with $\rho=1$.
\begin{lemma}\label{lem:l1}
Fix any $\gamma>0$ and put $\ms_n:=\exp(-n^{1+\gamma})$ for integer $n\geq 2$. Let $N_1$ and $N_2$ be integer-valued, possibly dependent on $\gamma$, functions defined on some (not necessarily the same) right vicinities of $0$ and satisfying $\lim_{s\to 0+}(N_1(s))^s=1$ and $\lim_{s\to 0+}(N_2(s))^s=+\infty$. Then
\begin{equation}\label{eq:inter}
\lim_{n\to\infty} f_\alpha(\ms_n)\sum_{k=2}^{N_1(\ms_n)}\frac {(\log k)^\alpha }{k^{1/2+\ms_n}} \tilde{\eta}_{k,1}(\ms_n)=0\quad\text{{\rm a.s.}}
\end{equation}
and
\begin{equation}\label{eq:inter1}
\lim_{n\to\infty} f_\alpha(\ms_n)\sum_{k\geq N_2(\ms_n)+1}\frac {(\log k)^\alpha }{k^{1/2+\ms_n}} \tilde{\eta}_{k,1}(\ms_n)=0\quad\text{{\rm a.s.}}
\end{equation}
\end{lemma}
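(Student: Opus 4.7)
The plan is to handle both \eqref{eq:inter} and \eqref{eq:inter1} by the moment generating function / Markov / Borel--Cantelli scheme already deployed in Lemma~\ref{4}. The pivotal feature of the subsequence $\ms_n=\exp(-n^{1+\gamma})$ is that $\log\log 1/\ms_n=(1+\gamma)\log n$, so any bound of the form $\mmp\{|Z_n|>\varepsilon\}\leq n^{-p}$ with $p>1$ will be summable. The whole task reduces to controlling the variance-like quantity appearing in the mgf.

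Let $Z_n$ denote either of the partial sums in question. Since $(\tilde\eta_{k,1}(\ms_n))_k$ is independent, centered, and obeys the pointwise bound~\eqref{eq:mm1} with $\rho=1$, the derivation leading to~\eqref{eq:ss1} carries over verbatim and produces
$$
\me \eee^{uZ_n}\leq \exp\!\left(\frac{u^2}{2}\,\exp\!\left(\frac{2|u|}{c_\alpha^{1/2}\log\log 1/\ms_n}\right)h_n\right),\quad u\in\mr,
$$
where $h_n:=f_\alpha(\ms_n)^2\sum_{k\in I_n}(\log k)^{2\alpha}k^{-1-2\ms_n}$ and $I_n=\{2,\ldots,N_1(\ms_n)\}$ for~\eqref{eq:inter}, $I_n=\{N_2(\ms_n)+1,N_2(\ms_n)+2,\ldots\}$ for~\eqref{eq:inter1}. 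The first technical step is to show $h_n\log n\to 0$ in both cases. An integral comparison combined with the substitution $y=2\ms_n\log x$ yields
$$
h_n=\frac{1+o(1)}{2^{1+2\alpha}c_\alpha(1+\gamma)\log n}\int_{J_n} y^{2\alpha}\eee^{-y}\,{\rm d}y,
$$
with $J_n=(0,2\ms_n\log N_1(\ms_n))$ for~\eqref{eq:inter} and $J_n=(2\ms_n\log N_2(\ms_n),\infty)$ for~\eqref{eq:inter1}. Since $\alpha>-1/2$ the integrand is integrable on $(0,\infty)$, while the hypotheses $\ms_n\log N_1(\ms_n)\to 0$ and $\ms_n\log N_2(\ms_n)\to+\infty$ respectively force $\int_{J_n}y^{2\alpha}\eee^{-y}\,{\rm d}y\to 0$, which gives the claim.

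With $h_n\log n\to 0$ in hand, I apply Markov's inequality to the mgf bound with $u=A(1+\gamma)\log n=A\log\log 1/\ms_n$ for a constant $A>0$ to be chosen. The correction factor $\exp(2|u|/(c_\alpha^{1/2}\log\log 1/\ms_n))$ then reduces to a fixed constant $C_A=\eee^{2A/c_\alpha^{1/2}}$, and for any $\varepsilon>0$,
$$
\mmp\{|Z_n|>\varepsilon\}\leq 2\exp\!\Big(-A\varepsilon(1+\gamma)\log n+\tfrac12 C_A A^2(1+\gamma)^2(h_n\log n)\log n\Big).
$$
The quadratic-in-$A$ term is $o(\log n)$ and is swallowed by the linear-in-$A$ term for large $n$; choosing $A>2/(\varepsilon(1+\gamma))$ therefore yields $\mmp\{|Z_n|>\varepsilon\}\leq 2n^{-2}$ eventually, and Borel--Cantelli then delivers both~\eqref{eq:inter} and~\eqref{eq:inter1}.

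The principal obstacle is that the hypotheses on $N_1$ and $N_2$ are purely qualitative and provide no quantitative rate for $\ms_n\log N_i(\ms_n)$; the choice of $u$ in the Markov step must therefore be dictated by the geometry of the subsequence alone, which is precisely why $\ms_n=\exp(-n^{1+\gamma})$ (with $\log\log 1/\ms_n\asymp\log n$) is built into the statement. A secondary subtlety is the integrability of $y^{2\alpha}\eee^{-y}$ near the origin in case~\eqref{eq:inter}, which is where the full strength of the standing assumption $\alpha>-1/2$ is actually exploited in this lemma.
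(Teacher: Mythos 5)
Your proof is correct and follows essentially the same route as the paper: the mgf bound inherited from the proof of Lemma~\ref{4}, the key observation that $\sum_{k\in I_n}(\log k)^{2\alpha}k^{-1-2\ms_n}=o(\ms_n^{-1-2\alpha})$ under the hypotheses on $N_1$, $N_2$ (which is exactly the paper's display~\eqref{eq:m1} and its counterpart~\eqref{eq:mm2}), a Markov bound with $u$ proportional to $\log\log 1/\ms_n=(1+\gamma)\log n$, and the Borel--Cantelli lemma. The only cosmetic difference is that you keep a free constant $A$ in front of $\log\log 1/\ms_n$ and send it large, whereas the paper fixes $u=(1/\varepsilon)\log\log 1/s$ and instead tunes the smallness parameter $r$ in the variance estimate; the two parametrizations produce the same summable bound.
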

\begin{proof}
For $s>0$ close to $0$, put $$Z_{1,\,\alpha}(s):=f_\alpha(s)\sum_{k=2}^{N_1(s)}\frac {(\log k)^\alpha }{k^{1/2+s}} \tilde{\eta}_{k,1}(s).$$ The argument leading to both \eqref{eq:inter} and \eqref{eq:inter1} is similar to that used in the proof of Lemma \ref{4}. In view of this, we provide a proof of \eqref{eq:inter} and only comment on a proof of \eqref{eq:inter1}.

As far as \eqref{eq:inter} is concerned, according to the Borel-Cantelli lemma, it is sufficient to prove that, for all $\varepsilon > 0$,
\begin{equation}\label{eq:w1}
\sum_{n\ge 1} \P\{Z_{1,\,\alpha}(\ms_n)>\varepsilon\} <\infty.
\end{equation}
To this end, we obtain (compare with \eqref{eq:ss1}), for $u\in\mr$,
$$
\E \eee^{u Z_{1,\,\alpha}(s)} \le   \exp \Big( \frac{u^2 (f_\alpha(s))^2}{2}\sum_{k=2}^{N_1(s)}\frac {(\log k)^{2\alpha}}{k^{1+2s}}\exp\Big( \frac{2c_\alpha^{-1/2} |u|}{\log\log 1/s} \Big) \Big).$$
As has already been mentioned, for each fixed $s>0$, the function $x\mapsto (\log x)^{2\alpha}x^{-1-2s}$ is decreasing on $(\eee^{2\alpha\vee 0},\infty)$. Hence,
\begin{multline}
\sum_{k=2}^{N_1(s)}\frac{(\log k)^{2\alpha}}{k^{1+2s}}=O(1)+\sum_{k=\eee^{2\alpha\vee 0}+1}^{N_1(s)} \frac{(\log k)^{2\alpha}}{k^{1+2s}} \le O(1)+\int_1^{N_1(s)+1}\frac{(\log x)^{2\alpha}}{x^{1+2s}}{\rm d}x \\ = O(1)+\frac 1{(2s)^{2\alpha+1}}\int_0^{\log(N_1(s)+1)^{2s}} y^{2\alpha} \eee^{-y}{\rm d}y=o(s^{-1-2\alpha}),\quad s\to 0+. \label{eq:m1}
\end{multline}
The last integral converges because $\alpha>-1/2$ and vanishes as $s\to 0+$ since $\lim_{s\to 0+}(\log(N_1(s)+1))^{2s}=0$ as secured by the assumption. Pick $r>0$ close to $0$ to ensure that, with $\varepsilon$ as in \eqref{eq:w1}, $\delta:=r\varepsilon^{-2}(2c_\alpha)^{-1}\exp(2\varepsilon^{-1}c_\alpha^{-1/2})$ satisfies $(1-\delta)(1+\gamma)>1$. According to the preceding discussion, for small enough $s>0$, $$\sum_{k=2}^{N_1(s)} \frac{(\log k)^{2\alpha}}{k^{1+2s}}\leq \frac{r}{s^{1+2\alpha}}.$$ Summarizing, for $u\in\mr$ and small $s>0$,
\begin{multline*}
\E \eee^{u Z_{1,\,\alpha}(s)} \le   \exp \Big( \frac{u^2 (f_\alpha(s))^2}{2} \frac{r}{s^{1+2\alpha}}\Big)\exp\Big( \frac{2c_\alpha^{-1/2} |u|}{\log\log 1/s} \Big) \Big)\\
=\exp \Big(\frac{r u^2}{2c_\alpha \log\log 1/s} \exp\Big( \frac{2 c_\alpha^{-1/2} |u|}{\log\log 1/s} \Big) \Big).
\end{multline*}
Invoking the Markov inequality with $u = (1/\varepsilon)\log\log(1/s)$ yields, for small $s>0$,
\begin{align*}
\P\{ Z_{1,\,\alpha}(s)>\varepsilon\} &\le \eee^{-u\varepsilon} \E \eee^{uZ_{1,\,\alpha}(s)} \le \exp\big(-\big(1 - r\varepsilon^{-2} (2c_\alpha)^{-1}\eee^{2 \varepsilon^{-1} c_\alpha^{-1/2}}\big) \log\log(1/s)    \big)\\
& = \frac 1{\log(1/s)^{1-\delta}}.
\end{align*}
This entails \eqref{eq:w1}.

The proof of \eqref{eq:inter1} mimics that of \eqref{eq:inter}. One should use the following counterpart of \eqref{eq:m1}:
\begin{equation} \label{eq:mm2}
\sum_{k\geq N_2(s)+1} \frac {(\log k)^{2\alpha}}{k^{1+2s}} \le \int_{N_2(s)}^\infty \frac{(\log x)^{2\alpha}}{x^{1+2s}}{\rm d}x=\frac{1}{(2s)^{1+2\alpha}}\int_{(\log N_2(s))^{2s}}^\infty y^{2\alpha}\eee^{-y}{\rm d}y= o(s^{-1-2\alpha}),\quad s\to 0+.
\end{equation}
\end{proof}
\begin{lemma}\label{lem:new}
Fix sufficiently small $\delta >0$, pick $\gamma >0$ satisfying $(1+\gamma)(1-\delta^2/8)<1$ and put $\ms_n=\exp(-n^{1+\gamma})$ for integer $n\geq 2$. Then
$$
{\lim\sup}_{n\to \infty}f_\alpha(\ms_n) \sum_{k\geq 2}\frac{(\log k)^\alpha}{k^{1/2+\ms_n}}\tilde{\eta}_{k,1}(\ms_n) \geq 1-\delta \quad\text{{\rm a.s.}}
$$
\end{lemma}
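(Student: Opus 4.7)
The plan is a classical second-Borel--Cantelli argument along the very sparse subsequence $(\ms_n)$: I would decompose the sum into three parts (a ``left'' tail, a ``middle'' block, and a ``right'' tail), eliminate the outer pieces via Lemma~\ref{lem:l1}, and exhibit the middle blocks as \emph{independent} random variables whose lower-tail probabilities form a non-summable sequence.

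First I would pick integer-valued functions $N_1(s)<N_2(s)$ with $(N_1(\ms_n))^{\ms_n}\to 1$ and $(N_2(\ms_n))^{\ms_n}\to\infty$ as $n\to\infty$, arranged so that the ``windows'' $W_n:=(N_1(\ms_n),\,N_2(\ms_n)]$ are pairwise disjoint. Disjointness is cheap because $\exp((n+1)^{1+\gamma}-n^{1+\gamma})$ grows super-polynomially, so e.g.\ $\log N_1(\ms_n):=\lfloor \eee^{n^{1+\gamma}}/n\rfloor$ and $\log N_2(\ms_n):=\lfloor n\,\eee^{n^{1+\gamma}}\rfloor$ work. By Lemma~\ref{lem:l1}, the contributions $\sum_{k=2}^{N_1(\ms_n)}$ and $\sum_{k>N_2(\ms_n)}$, weighted by $f_\alpha(\ms_n)$, tend to $0$ a.s., so it suffices to prove $\limsup_n Y_n\ge 1-\delta$ a.s., where
\[
Y_n := f_\alpha(\ms_n)\sum_{k\in W_n}\frac{(\log k)^\alpha}{k^{1/2+\ms_n}}\tilde{\eta}_{k,1}(\ms_n).
\]
Since $\tilde{\eta}_{k,1}(\ms_n)$ is a function of $\eta_k$ alone and the $W_n$ are disjoint, the $Y_n$ are independent, and by the second Borel--Cantelli lemma it is enough to show $\sum_n\P\{Y_n>1-\delta\}=\infty$.

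The crux is a sharp lower bound on $\P\{Y_n>1-\delta\}$. Subtracting the two tails controlled in~\eqref{eq:m1} and~\eqref{eq:mm2} from the full sum (whose asymptotics come from Lemma~\ref{lem:zeta_derivatives}) gives
\[
\sum_{k\in W_n}\frac{(\log k)^{2\alpha}}{k^{1+2\ms_n}}\sim\frac{c_\alpha}{2\ms_n^{1+2\alpha}},\quad n\to\infty,
\]
and combined with $\E\tilde{\eta}_{k,1}(\ms_n)^2\to 1$ uniformly for $k\in W_n$ (the truncation threshold in $\mathcal{A}^c_{k,1}(\ms_n)$ grows at least like $k^{1/2}$ up to slowly varying factors, so the truncated variance converges to $\E\eta^2=1$), this yields
\[
\sigma_n^2:=\mathrm{Var}(Y_n)=\frac{1+o(1)}{2(1+\gamma)\log n},\quad n\to\infty.
\]
By~\eqref{eq:mm1} each summand defining $Y_n$ is a.s.\ bounded in absolute value by $M_n=O\bigl(n^{-(1+\gamma)}(\log n)^{-1}\bigr)$, so $M_n/\sigma_n^2=O(n^{-(1+\gamma)})\to 0$ and $Y_n$ sits squarely in the moderate-deviations regime. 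A Cram\'er-type exponential-tilting argument---tilt by $t_n=(1-\delta)/\sigma_n^2$, lower-bound $\E\eee^{t_n Y_n}$ using the elementary inequality $\eee^x\ge 1+x+x^2/2-|x|^3\eee^{|x|}/6$ together with $1+u\ge\eee^{u-u^2/2}$ for $u\ge 0$, and apply Chebyshev to the tilted law (whose variance is still $(1+o(1))\sigma_n^2$ thanks to the smallness of $t_nM_n$) to conclude that the tilted probability of $\{Y_n>1-\delta\}$ is at least $1/2$ for large $n$---then gives
\[
\P\{Y_n>1-\delta\}\ge\exp\Big(-\tfrac{(1-\delta)^2}{2\sigma_n^2}(1+o(1))\Big)=n^{-(1+\gamma)(1-\delta)^2(1+o(1))}.
\]

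The main obstacle is executing this moderate-deviations lower bound rigorously while tracking the truncation error---controlling $\E|\tilde{\eta}_{k,1}(\ms_n)|^3$, verifying that the tilted variance is indeed $(1+o(1))\sigma_n^2$, and quantifying the Chebyshev step on the tilted measure. To conclude, observe that for small $\delta>0$,
\[
(1-\delta^2/8)-(1-\delta)^2=2\delta-\tfrac{9}{8}\delta^2>0,
\]
so the hypothesis $(1+\gamma)(1-\delta^2/8)<1$ forces $(1+\gamma)(1-\delta)^2<1$. Hence $\P\{Y_n>1-\delta\}\ge n^{-\kappa}$ eventually for some $\kappa<1$, the series $\sum_n\P\{Y_n>1-\delta\}$ diverges, and the second Borel--Cantelli lemma yields $\limsup_n Y_n\ge 1-\delta$ a.s., which is the claim.
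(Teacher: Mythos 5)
Your proposal follows essentially the same strategy as the paper's proof: a Cram\'er change-of-measure argument along independent blocks, combined with the converse Borel--Cantelli lemma. The decomposition into a left tail, a middle window, and a right tail, the use of Lemma~\ref{lem:l1} to discard the tails, the identification of the blocks as independent because the index windows are disjoint, the variance asymptotics $\sigma_n^2\sim 1/(2(1+\gamma)\log n)$, and the exponential-tilting lower bound on $\P\{Y_n>1-\delta\}$ all match the paper's architecture. The only differences are cosmetic choices: you tilt to center the mean at $1-\delta$ and obtain the sharper exponent $(1+\gamma)(1-\delta)^2$, whereas the paper centers at $1-\delta/2$ (the midpoint of the bracketing interval $(1-\delta,1]$) and settles for the cruder exponent $(1+\gamma)(1-\delta^2/8)$, which matches the constant in the stated hypothesis directly; and you propose a different, though equally valid, construction of the window endpoints $N_1,N_2$. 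One detail you gloss over that the paper treats explicitly: to turn the tilted probability into a lower bound on $\P\{Y_n>1-\delta\}$ you must restrict the event to a bounded window such as $\{1-\delta<Y_n\le 1\}$ (as the paper does with $U_{s,\alpha}$), so that $e^{-t_nY_n}$ can be bounded from below uniformly on the event; this is where your centering at $1-\delta$ is slightly more delicate than the paper's centering at $1-\delta/2$, since under your tilt the event $\{Y_n>1-\delta\}$ has tilted probability only $\to 1/2$ rather than $\to 1$, and a small margin (e.g.\ a window of width comparable to $\sigma_n$) must be carved out. These are execution details that you flag yourself, not conceptual gaps.
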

\begin{proof}
Let $N_1$ and $N_2$ be integer-valued, possibly dependent on $\gamma$, functions which are nonincreasing in some right vicinities of $0$ and satisfy $\lim_{s\to 0+}(N_1(s))^s=1$, $\lim_{s\to 0+}(N_2(s))^s=+\infty$ and
\begin{equation}\label{choice}
N_1(\ms_{n+1})\ge N_2(\ms_n),\quad n\geq n_0
\end{equation}
for some $n_0\in\mn$. Such a choice is possible. For instance, according to the discussion on p.~7 in \cite{Aymone+Frometa+Misturini:2020}, one can take
$$
N_1(s) = \left\lfloor \left(1+s-\left(\frac{1+\gamma}{\log\log 1/s}\right)^{1/2}\right)^{-1/s}\right\rfloor,\quad s\in (0,s_0),
$$
where $s_0=s_0(\gamma)$ is the smallest positive root of $(1+s)^2\log\log(1/s)=1+\gamma$ on $(0,1/\eee)$, and
$$
N_2(s):=\left\lfloor \left(\frac{\log\log 1/s}{(1+\gamma)\beta_n}\right)^{1/s} \right\rfloor-1, \quad s\in (\ms_{n+1}, \ms_{n}],\quad n\geq 2.
$$
Here,
$$
\beta_n:=\log n\left(1+\ms_{n+1}-(\log(n+1))^{-1/2}\right)^{\exp((n+1)^{1+\gamma}-n^{1+\gamma})}$$ and particularly $\lim_{n\to\infty}\beta_n/\log n=0$.

In view of Lemma \ref{lem:l1}, it is sufficient to check that
\begin{equation}\label{eq:ss9}
\limsup_{n\to\infty}\, Z_{2,\,\alpha}(\ms_n) \ge 1-\delta\quad\text{a.s.},
\end{equation}
where, for small $s>0$, $$Z_{2,\,\alpha}(s)=f_\alpha(s) \sum_{k=N_1(s)+1}^{N_2(s)}\frac {(\log k)^{\alpha}}{k^{1/2+s}}\tilde{\eta}_{k,1}(s).$$ We shall prove that there exists $\overline s\in (0, s_0)$ such that, for all $s\in (0,\overline s)$,
\begin{equation}\label{eq:r3}
\P\{Z_{2,\,\alpha}(s)>1-\delta \}\ge  3^{-1} \eee^{-(1-\delta^2/8) \log\log 1/s}.
\end{equation}
As a consequence, $$\sum_{n\ge n_1}\P\{ Z_{2,\alpha}(\ms_n)>1-\delta  \} \ge 3^{-1} \sum_{n\ge n_1} \frac{1}{n^{(1+\gamma)(1-\delta^2/8)}} = \infty,$$ where $n_1 \ge n_0$ is chosen in such a way that $\ms_n <\overline s$ for $n\geq n_1$. In view of \eqref{choice}, the random variables $Z_{2,\alpha}(\ms_{n_0})$, $Z_{2,\alpha}(\ms_{n_0+1}),\ldots$ are independent. Hence,
divergence of the series entails \eqref{eq:ss9} by the converse part of the Borel-Cantelli lemma.

As a preparation for the proof of \eqref{eq:r3}, consider the event
$$
U_{s,\,\alpha}: = \big\{ 1-\delta < Z_{2,\,\alpha}(s) \le 1 \big\} = \big\{ (1-\delta) g_\alpha(s)< s^{1/2+\alpha} W_\alpha(s) \le g_\alpha(s)\big\},
$$
where $g_\alpha(s): =\big(c_\alpha \log\log 1/s\big)^{1/2}$ and
$$
W_\alpha(s):=\frac{Z_{2,\,\alpha}(s)}{f_\alpha(s)}= \sum_{k =N_1(s)+1}^{N_2(s)} \frac {(\log k)^\alpha}{k^{1/2+s}}\tilde{\eta}_{k,1}(s).
$$
Given $u\in\mr$ and small $s>0$ introduce a new probability measure $\Q_{s,u}$ on $(\Omega,\mathcal{F})$ by the equality
$$
\Q_{s,u}(A) = \frac{\E \left(\eee^{u s^{1/2+\alpha} W_\alpha(s)}\1_{A}\right)}{\E \eee^{u  s^{1/2+\alpha} W_\alpha(s)}}=\frac{\int_{A}\eee^{u s^{1/2+\alpha} W_\alpha(s)}{\rm d}\mmp}{\int_{\Omega}\eee^{u s^{1/2+\alpha} W_\alpha(s)}{\rm d}\mmp},\quad A\in\mathcal{F}.
$$
We suppress the dependence of $\Q_{s,u}$ on $\alpha$ for notational simplicity. Then
\begin{multline}\label{eq:ss10}
\left(\E \eee^{u (s^{1/2+\alpha} W(s)-g_\alpha(s))}\right)\Q_{s,u}(U_{s,\,\alpha})=\eee^{-ug_{\alpha}(s)}\int_{U_{s,\,\alpha}}\eee^{us^{1/2+\alpha}W_{\alpha}(s)}{\rm d}\mmp\\
\leq \eee^{-ug_{\alpha}(s)}\int_{U_{s,\,\alpha}}\eee^{ug_{\alpha}(s)}{\rm d}\mmp=\mmp(U_{s,\,\alpha})\leq \P\{Z_{2,\,\alpha}(s)>1-\delta\}.
\end{multline}
We shall show that, upon choosing an appropriate $u=u(s)=O((\log\log 1/s)^{1/2})$, the expectation on the left-hand side is bounded by $\eee^{-(1-\delta^2/8)\log\log 1/s}$ from below and also prove that $\Q_{s,u}(U_{s,\,\alpha})\geq 1/3$, thereby deriving~\eqref{eq:r3}.

First, we show that, with $u  = O((\log\log (1/s))^{1/2})$,
\begin{equation}\label{eq:p1}
\E  \eee^{u s^{1/2+\alpha} W_\alpha(s)} = \eee^{(c_\alpha/4)u^2 + u^2 h_\alpha(s)}, \quad s\to 0+
\end{equation}
for some function $h_\alpha$ satisfying $\lim_{s\to 0+} h_\alpha(s) = 0$.

Put
$$
\xi_{k}(s):= \frac{s^{1/2+\alpha} (\log k)^\alpha }{k^{1/2 + s}} \tilde{\eta}_{k,1}(s),\quad k\geq 2,\quad s\in (0,1/\eee),
$$
so that, for $u\in\mr$,
\begin{equation}\label{eq:m26}
u s^{1/2+\alpha} W_\alpha(s) =  \sum_{k=N_1(s)+1}^{N_2(s)} u \xi_{k}(s)
\end{equation}
(again, we suppress the dependence of $\xi_{k}(s)$ on $\alpha$). Invoking the second inequality in \eqref{eq:mm1} we obtain
$$
|u \xi_k(s)|\leq \frac{2}{\log 1/s} = o(1), \quad s\to 0+\quad\text{a.s.},
$$
for every $k\geq 3$. Recalling the
asymptotic expansions
$$
\eee^x = 1 + x + x^2/2 + o(x^2)\quad\text{and}\quad\log(1+x)=x+O(x^2),\quad x\to 0,
$$
we infer
\begin{align}\label{eq:ss3}
  \E \eee^{u s^{1/2+\alpha} W_\alpha(s)} &= \prod_{k= N_1(s)+1}^{N_2(s)} \E \exp( u \xi_{k}(s))\notag\\
 & = \prod_{k= N_1(s)+1}^{N_2(s)} \E \big( 1 +  u \xi_{k}(s) + u^2 \xi^2_{k}(s)\big(1/2 + o(1)\big)\big)\notag \\
 & = \exp \sum_{k= N_1(s)+1}^{N_2(s)}\log \big(  1 +   u^2 \E \xi^2_{k}(s)
 \big(1/2 + o(1)\big) \big)\notag\\
 & = \exp \Big(  u^2\big(1/2 + o(1)\big)\sum_{k= N_1(s)+1}^{N_2(s)} \E \xi^2_{k}(s)
 +u^4 O \Big( \sum_{k=N_1(s)+1}^{N_2(s)} (\E \xi^2_{k}(s))^2\Big)\Big).
 \end{align}
By monotonicity,
$$\int_{N_1(s)+1}^{N_2(s)+1} \frac {(\log x)^{2\alpha}}{x^{1+2s}}{\rm d}x \le \sum_{k=N_1(s)+1}^{N_2(s)} \frac{(\log k)^{2\alpha}}{k^{1+2s}} \le \int_{N_1(s)}^{N_2(s)} \frac {(\log x)^{2\alpha}}{x^{1+2s}}{\rm d}x.$$
Combining this with \eqref{eq:m1} and \eqref{eq:mm2} we conclude that $$\lim_{s\to 0+} s^{1+2\alpha} \sum_{k=N_1(s)+1}^{N_2(s)} \frac{(\log k)^{2\alpha}}{k^{1+2s}}=\frac{\Gamma(1+2\alpha)}{2^{1+2\alpha}}=\frac{c_\alpha}{2}.
$$
The latter, together with uniformity in integer $k\in [N_1(s)+1, N_2(s)]$ of the limit relation
$$
\E \tilde{\eta}_{k,1}^2(s) =  \E \eta_k^2 \1_{\mathcal{A}^c_{k,1}(s)} - \big(\E \eta_k \1_{\mathcal{A}^c_{k,1}(s)} \big)^2 ~\to~ 1,\quad s\to 0+,
$$
yields
\begin{equation}\label{eq:ss4}
\sum_{k= N_1(s)+1}^{N_2(s)} \E \xi_k^2(s) = s^{1+2\alpha}\sum_{k=N_1(s)+1}^{N_2(s)} \frac{(\log k)^{2\alpha}}{k^{1+2s}} \; \E \tilde{\eta}_{k,1}^2(s )~\to~\frac{c_\alpha}{2},  \quad s\to 0+.
\end{equation}
Finally,
\begin{multline}\label{eq:ss5}
u^2 \sum_{k= N_1(s)+1}^{N_2(s)} (\E \xi_k^2(s))^2 =u^2
 s^{2+4\alpha} \sum_{k= N_1(s)+1}^{N_2(s)} \frac{(\log k)^{4\alpha}}{k^{2+4s}}(\E \tilde{\eta}_{k,1}^2(s))^2\\
 \leq u^2 s^{2+4\alpha}\sum_{k\geq N_1(s)+1}\frac{(\log k)^{4\alpha}}{k^2} = o(1),\quad s\to 0+.
\end{multline}
Now relations \eqref{eq:ss3}, \eqref{eq:ss4} and \eqref{eq:ss5} entail \eqref{eq:p1}.

Observe that, for any fixed $u\in {\mathbb R}$, formula \eqref{eq:p1} reads $\lim_{s\to 0+}\E \eee^{u s^{1/2+\alpha} W_\alpha(s)}= \E \eee^{(c_\alpha/4)u^2}$, which implies a central limit theorem $s^{1/2+\alpha} W_\alpha(s){\overset{{\rm d}}\longrightarrow}(c_\alpha/2)^{1/2}{\rm Normal} (0,1)$ as $s\to 0+$. Here, as before, ${\rm Normal} (0,1)$ denotes a random variable with the standard normal distribution.

We are ready to prove \eqref{eq:r3}. Put $$u = u(s) = (1-\delta/2)2c_\alpha^{-1/2}(\log\log 1/s)^{1/2}= (1-\delta/2)2c_\alpha^{-1}g_\alpha(s).$$
Formula \eqref{eq:p1} implies that
\begin{equation}\label{eq:p11}
\E \eee^{u (s^{1/2+\alpha} W_\alpha(s)-g_\alpha(s))}=\eee^{-(1-\delta^2/4) \log\log 1/s + o(\log\log 1/s)} \ge \eee^{-(1-\delta^2/8) \log\log 1/s}
\end{equation}
for small $s>0$. Next, we intend to show the $\Q_{s,u}$-distribution of $s^{1/2+\alpha} W_\alpha(s)-((c_\alpha/2)u + 2uh_\alpha(s))$ converges weakly as $s\to 0+$ to the $\mmp$-distribution of $(c_\alpha/2)^{1/2}{\rm Normal} (0,1)$. To this end, we prove convergence of the moment generating functions. Let $\E_{\Q_{s,u}}$ denote the expectation with respect to the probability measure $\Q_{s,u}$. Using \eqref{eq:p1} we obtain, for $t\in\mr$,
\begin{align*}
&\hspace{-0.4cm}\E_{\Q_{s,u}} \eee^{t (s^{1/2+\alpha} W_\alpha(s)-((c_\alpha/2)u + 2 uh(s)))}= \frac{\E \eee^{(t+u) s^{1/2+\alpha} W_\alpha(s)}}{\E \eee^{u s^{1/2+\alpha} W_\alpha(s)}}\eee^{-t((c_\alpha/2)u + 2uh_\alpha(s))}\\
&= \exp\big( (c_\alpha/4)(t+u)^2  + (t+u)^2 h_\alpha(s) - (c_\alpha/4) u^2 -   u^2h_\alpha(s) -   t((c_\alpha/2)u + 2uh_\alpha(s))
 \big)\\
&=\exp\big((c_\alpha/4+h_\alpha(s))t^2\big)\to \exp((c_\alpha/4)t^2)=\me \exp(t(c_\alpha/2)^{1/2}{\rm Normal} (0,1)),\quad s\to 0+.
\end{align*}
The weak convergence ensures that
\begin{multline*}
{\lim\sup}_{s\to 0+} \Q_{s,u} \big\{s^{1/2+\alpha} W_\alpha(s)\leq (1-\delta)g_\alpha(s)\big\}\\
\le \lim_{s\to 0+}\Q_{s,u} \big\{s^{1/2+\alpha} W_\alpha(s)\leq (c_\alpha/2)u + 2uh_\alpha(s) \big\}
=\mmp\{{\rm Normal}\,(0,1)\leq 0\}= 1/2.
\end{multline*}
Since $\lim_{s\to 0+} (g_\alpha(s) - ((c_\alpha/2)u + 2uh_\alpha(s)))=+\infty$, we also have $$\lim_{s\to 0+} \Q_{s,u} \big\{s^{1/2+\alpha} W_\alpha(s)\leq g_\alpha(s)\big\}=1.$$
Summarizing,
\begin{equation}\label{eq:p12}
\Q_{s,u} ( U_{s,\,\alpha})=\Q_{s,u} \big\{s^{1/2+\alpha} W_\alpha(s)\leq g_\alpha(s)\big\}-\Q_{s,u} \big\{s^{1/2+\alpha} W_\alpha(s)\leq (1- \delta)g_\alpha(s)\big\} \ge \frac 13
\end{equation}
for small $s>0$. Now \eqref{eq:r3} follows from \eqref{eq:ss10}, \eqref{eq:p11} and \eqref{eq:p12}. The proof of Lemma \ref{lem:new} is complete.
\end{proof}

\begin{proof}[Proof of Proposition \ref{lilhalf2}]
Again, relation \eqref{lil21} follows from \eqref{lil11} upon replacing $\eta_k$ with $-\eta_k$. To prove \eqref{lil11}, fix sufficiently small $\delta>0$ and pick $\gamma>0$ and $(\ms_n)_{n\in\mn}$ as in Lemma~\ref{lem:new}. Recalling that $\me \eta_k=0$, write
\begin{align*}
f_\alpha(\ms_n)\sum_{k\geq 2}\frac{(\log k)^\alpha}{k^{1/2+\ms_n}}\eta_k&=f_\alpha(\ms_n)\sum_{k=2}^{\lfloor 1/\ms_n\rfloor}\frac{(\log k)^\alpha}{k^{1/2+\ms_n}}\eta_k\\
&-f_\alpha(\ms_n)\sum_{k=2}^{\lfloor 1/\ms_n\rfloor}\frac{(\log k)^\alpha}{k^{1/2+\ms_n}}(\eta_k\1_{\mathcal{A}^c_{k,1}(\ms_n)}-\me (\eta_k\1_{\mathcal{A}^c_{k,1}(\ms_n)}))\\
&+f_\alpha(\ms_n)\sum_{k\geq \lfloor 1/\ms_n\rfloor+1} \frac{(\log k)^\alpha}{k^{1/2+\ms_n}}(\eta_k\1_{\mathcal{A}_{k,1}(\ms_n)}-\me (\eta_k\1_{\mathcal{A}_{k,1}(\ms_n)}))\\&+f_\alpha(\ms_n)\sum_{k\geq 2}\frac{(\log k)^\alpha}{k^{1/2+\ms_n}}(\eta_k\1_{\mathcal{A}^c_{k,1}(\ms_n)}-\me (\eta_k\1_{\mathcal{A}^c_{k,1}(\ms_n)})).
\end{align*}
The first, second and third terms converge to $0$ a.s.\ as $n\to\infty$, by Lemma \ref{1}, formula \eqref{eq:inter} of Lemma \ref{lem:l1}, with $N_1(s)=M(s)=\lfloor 1/s\rfloor$, and Lemma \ref{3}, respectively.
By Lemma \ref{lem:new}, as $n\to\infty$, the upper limit of the fourth term is not smaller than $1-\delta$ a.s. Thus, $${\lim\sup}_{s\to 0+} f_\alpha(s)\sum_{k\geq 2}\frac{(\log k)^\alpha}{k^{1/2+s}}\eta_k \geq {\lim\sup}_{n\to\infty} f_\alpha(\ms_n)\sum_{k\geq 2}\frac{(\log k)^\alpha}{k^{1/2+\ms_n}}\eta_k\geq 1-\delta \quad\text{a.s.},$$ and \eqref{lil11} follows upon letting $\delta$ tend to $0$.
\end{proof}
\begin{proof}[Proof of Theorem \ref{main}]
Formulae \eqref{auxlimsup} and \eqref{auxliminf} follow immediately from Propositions \ref{lilhalf} and \ref{lilhalf2}. It remains to prove \eqref{eq:limset}. To this end, we note that the random function $s\mapsto D(\alpha; 1/2+s)=\sum_{k\geq 2} (\log k)^\alpha k^{-1/2-s} \eta_k$ is a.s.\ continuous on $(0,\infty)$ as the restriction of the random analytic function $z\mapsto D(\alpha; 1/2+z)$, $z\in H_0$.
Therefore, the function $$s\mapsto \Big(\frac{2^{2\alpha}}{\sigma_1^2 \Gamma(1+2\alpha)}\frac{s^{1+2\alpha}}{\log\log 1/s}\Big)^{1/2}D(\alpha; 1/2+s)$$ is a.s.\ continuous on $(0, 1/\eee)$ with ${\lim\sup}_{s\to 0+}=1$ and ${\lim\inf}_{s\to 0+}=-1$. This immediately entails \eqref{eq:limset} with the help of the intermediate value theorem for continuous functions.
\end{proof}

\section{Appendix}

Lemma \ref{lem:zeta_derivatives} is used in the proof of Theorem \ref{thm:flt_main}. While relation \eqref{eq:zeta_derivatives} follows easily if $z$ approaches $0$ along positive reals, it does require a proof if $z$ approaches $0$ along $H_0$.
\begin{lemma}\label{lem:zeta_derivatives}
Let $\beta>-1$ be fixed. Then
\begin{equation}\label{eq:zeta_derivatives}
\lim_{z\to 0,\,z\in H_0}z^{1+\beta}\sum_{k\geq 2}\frac{(\log k)^{\beta}}{k^{1+z}}=\Gamma(1+\beta).
\end{equation}
\end{lemma}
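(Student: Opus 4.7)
The strategy is to compare the sum to an integral, then evaluate the integral explicitly via a change of variables and analytic continuation. The entire difficulty is to do this uniformly as $z \to 0$ along arbitrary sequences inside $H_0$, not just along the positive reals.

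Set $f_z(x) := (\log x)^\beta x^{-1-z}$ for $x \geq 2$. The first step is an Euler–Maclaurin (or Abel summation) argument:
\begin{equation*}
\sum_{k\geq 2} f_z(k) = \int_2^\infty f_z(x)\,{\rm d}x - \tfrac{1}{2}f_z(2) + \int_2^\infty (\{x\}-\tfrac{1}{2})f_z'(x)\,{\rm d}x =: \int_2^\infty f_z(x)\,{\rm d}x + R(z),
\end{equation*}
where $\{x\} = x - \lfloor x\rfloor$. A direct computation gives $|f_z'(x)| \leq C(1+|z|)\bigl((\log x)^\beta + (\log x)^{\beta-1}\bigr)x^{-2-\Re(z)}$, and since $\int_2^\infty (1 + (\log x)^\beta)x^{-2}\,{\rm d}x < \infty$ (independently of $\beta > -1$), we see that $R(z)$ is bounded uniformly for $z$ in any bounded subset of $H_0$. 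Because $\beta > -1$, we have $z^{1+\beta} \to 0$ as $z\to 0$ in $H_0$, so $z^{1+\beta}R(z) \to 0$.

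The second step evaluates the integral. Substituting $w = \log x$ yields
\begin{equation*}
\int_2^\infty (\log x)^\beta x^{-1-z}\,{\rm d}x = \int_{\log 2}^\infty w^\beta e^{-zw}\,{\rm d}w = \int_0^\infty w^\beta e^{-zw}\,{\rm d}w - \int_0^{\log 2} w^\beta e^{-zw}\,{\rm d}w.
\end{equation*}
The second integral on the right is bounded in absolute value by $\int_0^{\log 2} w^\beta\,{\rm d}w = (\log 2)^{1+\beta}/(1+\beta) < \infty$ uniformly for $\Re(z) \geq 0$, so multiplication by $z^{1+\beta}$ kills it. For the first integral, the identity
\begin{equation*}
\int_0^\infty w^\beta e^{-zw}\,{\rm d}w = \frac{\Gamma(1+\beta)}{z^{1+\beta}}
\end{equation*}
is immediate for real $z = s > 0$ via $u = sw$. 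Both sides extend to analytic functions of $z \in H_0$ (the left side by dominated-convergence/Morera, using $|w^\beta e^{-zw}| \leq w^\beta e^{-\delta w}$ on $\{\Re(z) \geq \delta\}$; the right side with the principal branch of $z^{1+\beta}$), so analytic continuation forces equality throughout $H_0$.

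Combining the two steps yields $z^{1+\beta}\sum_{k\geq 2} f_z(k) \to \Gamma(1+\beta)$ as $z\to 0$ in $H_0$. The only nontrivial point — which is the main obstacle the lemma was designed to address — is the passage to complex $z$ in the evaluation of the Laplace integral; I handle it by analytic continuation rather than by a direct contour deformation, which sidesteps the need to estimate circular arcs at infinity.
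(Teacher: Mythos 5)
Your proof is correct and follows essentially the same approach as the paper's: Euler summation to replace the sum by $\int (\log x)^\beta x^{-1-z}\,{\rm d}x$ with a uniformly bounded remainder, a substitution $w=\log x$ reducing this to a Laplace integral, and the Gamma-function identity. The only stylistic differences are that the paper splits $f_z$ into its real and imaginary parts (with $\cos(v\log t)$ and $\sin(v\log t)$) before applying Apostol's form of Euler summation, whereas you work directly with the complex-valued $f_z$ by linearity, and that you are more explicit than the paper about justifying $\int_0^\infty w^\beta e^{-zw}\,{\rm d}w = \Gamma(1+\beta)/z^{1+\beta}$ for complex $z\in H_0$ via analytic continuation — a point the paper simply asserts.
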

\begin{proof}
Let $\zeta$ be the analytic continuation of the Riemann zeta-function to $\mathbb{C}\setminus\{1\}$. First of all, note that for $\beta\in\mn_0$ the result is a
consequence of the facts, see, for instance, Theorem 12.5 (a) on p.~255 in \cite{Apostol:1976}), that $\lim_{z\to 0,\,z\in \mathbb{C}}z \zeta(1+z)=1$ and $\mc\ni z\mapsto z\zeta(1+z)$ is an entire function. Thus,
$$
\frac{{\rm d}^k}{{\rm d}z^k}(z\zeta(1+z))\Big|_{z=0}\quad  \text{is finite for } k=0,1,\ldots,\beta,
$$
and~\eqref{eq:zeta_derivatives} follows by induction. This argument fails when $\beta$ is not an integer.

Now we provide a proof which works for any $\beta>-1$. To this end, put $z=u+\ii v$ and write
$$
\sum_{k\geq 2}\frac{(\log k)^{\beta}}{k^{1+z}}=\sum_{k\geq 2}\frac{(\log k)^{\beta}\cos(v\log k)}{k^{1+u}} - \ii \sum_{k\geq 2}\frac{(\log k)^{\beta}\sin(v\log k)}{k^{1+u}}.
$$
We are going to apply Euler's summation formula in the form given by Theorem 3 on p.~54 in \cite{Apostol:1976} with $y=1$, $x=N\in\mn$, and
$$
f(t):=t^{-(1+u)}(\log t)^{\beta}\left\{\begin{matrix}\cos\\ \sin \end{matrix}\right\}(v\log t).
$$
It can be checked that, for $u\in [0,1]$ and $|v|\leq 1$,
$$
|f'(t)|=O\left(\frac{1}{t^{3/2}}\right),\quad t\to +\infty,
$$
where the constant in the Landau symbol $O$ does not depend on $u$ and $v$. Hence,
$$
\sum_{k=2}^{N}f(k)=\int_1^N f(t){\rm d}t + \int_1^N (t-\lfloor t\rfloor)f'(t){\rm d}t=\int_1^N f(t){\rm d}t + O(1),
$$
where the symbol $O$ is uniform in $u\in [0,1]$, $|v|\leq 1$ and $N$. Sending $N\to\infty$ yields
$$
\sum_{k\geq 2}f(k)=\int_1^{\infty}f(t){\rm d}t + O(1).
$$
Thus, as $z\to 0$ inside the region $|z|=\sqrt{u^2+v^2}\leq 1$ and $z\in H_0$,
$$
\sum_{k\geq 2}\frac{(\log k)^{\beta}}{k^{1+z}}=\int_1^{\infty}\frac{(\log t)^{\beta}}{t^{1+z}}{\rm d}t+O(1)=\int_0^{\infty}x^{\beta}\eee^{-zx}{\rm d}x+O(1)=z^{-(1+\beta)}\Gamma(1+\beta)+O(1),
$$
and~\eqref{eq:zeta_derivatives} follows.
\end{proof}

\noindent {\bf Acknowledgement}. The research was supported by the High Level Talent Project DL2022174005L of Ministry of Science and Technology of PRC. The authors thank Zakhar Kabluchko for bringing to our attention several references on Gaussian processes and their zeros.


\begin{thebibliography}{99}

\bibitem{Angst+Pham+Poly:2018} J. Angst, V.-H. Pham and G. Poly, \textit{Universality of the nodal length of bivariate random trigonometric polynomials}. Trans. Amer. Math. Soc. \textbf{370} (2018), 8331--8357.

\bibitem{Angst+Poly:2022} J. Angst and G. Poly, \textit{On the zeros of non-analytic random periodic signals}. Int. Math. Res. Not. \textbf{2022} (2022), 4931--4968.

\bibitem{Apostol:1976} T. Apostol, \textit{Introduction to analytic number theory}, Springer, 1976.

\bibitem{Aymone:2019} M. Aymone, \textit{Real zeros of random Dirichlet series}. Electron. Commun. Probab. \textbf{24} (2019), article no. 54, 1--8.

\bibitem{Aymone+Frometa+Misturini:2020} M. Aymone, S. Fr\'{o}meta and R. Misturini, \textit{Law of the iterated logarithm for a random Dirichlet series}. Electron. Commun. Probab. \textbf{25} (2020), article no. 56, 1--14.


\bibitem{Billingsley:1968} P. Billingsley, \textit{Convergence of probability measures}, Wiley, 1968.

\bibitem{Bingham+Goldie+Teugels:1989} N.~H. Bingham, C.~M. Goldie and J.~L. Teugels, \textit{Regular variation}, Cambridge University Press,
1989.

\bibitem{Bovier+Picco:1993} A. Bovier and P. Picco, \textit{A law of the iterated logarithm for random geometric series}. Ann. Probab. \textbf{21} (1993), 168--184.

\bibitem{Ding+Xiao:2006} X. Ding and Y. Xiao, \textit{Natural boundary of random Dirichlet series}. Ukr.~Math.~J. \textbf{58} (2006), 997--1005.

\bibitem{Hough+Krishnapur+Peres+Virag} J. Ben Hough, M. Krishnapur, Y. Peres and B. Vir\`{a}g, \textit{Zeros of Gaussian analytic functions and determinantal point processes},  American Mathematical Society, Vol. \textbf{51}, 2009.

\bibitem{Iksanov+Kabluchko+Marynych:2016} A. Iksanov, Z. Kabluchko and A. Marynych, \textit{Local universality for real roots of random trigonometric polynomials.} Electron. J. Probab. \textbf{21} (2016), article no.~63, 1--19.

\bibitem{Kabluchko:2019} Z. Kabluchko, \textit{An infinite-dimensional helix invariant under spherical projections.} Electron. Commun. Probab. {\bf 24} (2019),  article no.~25, 1--13.

\bibitem{Kabluchko+Klimovsky:2014} Z. Kabluchko and A. Klimovsky, \textit{Complex random energy model: zeros and fluctuations.} Probab. Theor. Relat. Fields. \textbf{158} (2014), 159--196.

\bibitem{Kabluchko+Zaporozhets:2014} Z. Kabluchko and D. Zaporozhets, \textit{Asymptotic distribution of complex zeros of random analytic functions}.  Ann. Probab. \textbf{42} (2014), 1374-1395.

\bibitem{Kahane:1968} J.-P. Kahane, \textit{Some random series of functions}, Cambridge University Press, 1968.

\bibitem{Matsumoto+Shirai:2013} S. Matsumoto and T. Shirai, \textit{Correlation functions for zeros of a Gaussian power series and Pfaffians.} Electron. J. Probab. {\bf 18} (2013), article no.~49, 1--18.

\bibitem{Peres+Virag:2005} Y. Peres and B. Vir\'{a}g, \textit{Zeros of the i.i.d. Gaussian power series: a conformally invariant determinantal process.} Acta Mathematica {\bf 194} (2005), 1--35.

\bibitem{Samko+Kilbas+Marichev:1993} S. Samko, A. Kilbas and O. Marichev, \textit{Fractional integrals and derivatives: theory and applications}, Gordon and Breach Science Publishers, 1993.

\bibitem{Shirai:2012} T. Shirai, \textit{Limit theorems for random analytic functions and their zeros.} RIMS K\^{o}ky\^{u}roku Bessatsu {\bf B34} (2012), 335--359.

\bibitem{Tao+Vu:2015} T. Tao and V. Vu, \textit{Local universality of zeroes of random polynomials.} Int. Math. Res. Not. \textbf{2015} (2015), 5053--5139.

\bibitem{Teicher:1974} H. Teicher, \textit{On the law of iterated logarithm}.  Ann. Probab. \textbf{2} (1974), 714--728.

\bibitem{Titchmarsh:1952} E. Titchmarsh, \textit{The theory of functions}, Oxford University Press, 1952.

\bibitem{Widder:1946} D. Widder, \textit{The Laplace transform}, Princeton Mathematical Series, 1946.

\bibitem{Ylvisaker:1968} N. D. Ylvisaker, \textit{A note on the absence of tangencies in Gaussian sample paths.} Ann. Math. Statist. \textbf{39} (1968), 261--262.

\bigskip
\end{thebibliography}
\end{document}